\documentclass{article}
\usepackage{graphicx} 
\usepackage{amsthm}
\usepackage{amsfonts, amsmath, amssymb, amsthm}
\usepackage{cleveref}
\usepackage[utf8x]{inputenc}

\author{Quentin Le Hou\'erou \and Ludovic Patey}

\title{$\Pi^0_4$ conservation of a Carlson-Simpson lemma\\ for 1-variable words}

\newtheorem{theorem}{Theorem}
\numberwithin{theorem}{section}
\newtheorem{maintheorem}[theorem]{Main Theorem}
\newtheorem{lemma}[theorem]{Lemma}
\newtheorem{question}[theorem]{Question}
\newtheorem{proposition}[theorem]{Proposition}
\newtheorem{statement}[theorem]{Statement}

\newtheorem{definition}[theorem]{Definition}
\newtheorem{corollary}[theorem]{Corollary}

\makeatletter
\newtheorem*{rep@theorem}{\rep@title}
\newcommand{\newreptheorem}[2]{%
\newenvironment{rep#1}[1]{%
 \def\rep@title{#2 \ref{##1}}%
 \begin{rep@theorem}}%
 {\end{rep@theorem}}}
\makeatother

\newreptheorem{theorem}{Theorem}
\newreptheorem{maintheorem}{Main Theorem}

\usepackage{xcolor}

\newcommand{\PIOOCA}{\Pi^1_1\mathsf{-CA}}
\newcommand{\RCA}{\mathsf{RCA}}
\newcommand{\WKL}{\mathsf{WKL}}

\newcommand{\COH}{\mathsf{COH}}
\newcommand{\ACA}{\mathsf{ACA}}
\newcommand{\OVW}{\mathsf{OVW}}
\newcommand{\OVWD}[3]{\mathsf{OVW}^{#1}_{#3}(#2)}
\newcommand{\CSL}{\mathsf{CSL}}
\newcommand{\CSLD}[3]{\mathsf{CSL}^{#1}_{#3}(#2)}
\newcommand{\CTD}[3]{\mathsf{CT}^{#1}_{#3}(#2)}
\newcommand{\GR}{\mathsf{GR}}
\newcommand{\OGR}{\mathsf{OGR}}
\newcommand{\RFCSL}{\mathsf{LCSL}}

\newcommand{\TT}{\mathsf{TT}}
\newcommand{\FUT}{\mathsf{FUT}} 
\newcommand{\HT}{\mathsf{HT}} 

\newcommand{\PRA}{\mathsf{PRA}}

\newcommand{\BSig}{\mathsf{B}\Sigma^0}
\newcommand{\ISig}{\mathsf{I}\Sigma^0}

\newcommand{\RT}{\mathsf{RT}}

\newcommand{\Psf}{\mathsf{P}}
\newcommand{\Qsf}{\mathsf{Q}}

\newcommand{\HH}{\mathbb{H}}
\newcommand{\NN}{\mathbb{N}}
\newcommand{\QQ}{\mathbb{Q}}

\newcommand{\GG}{\mathbb{G}}

\newcommand{\Mk}{\mathfrak{M}}
\newcommand{\Nk}{\mathfrak{N}}
\newcommand{\Fk}{\mathfrak{F}}

\renewcommand{\P}{\mathcal{P}}
\newcommand{\M}{\mathcal{M}}
\renewcommand{\P}{\mathcal{P}}

\newcommand{\uh}[0]{{\upharpoonright}}

\newcommand{\OSub}{\mathsf{OSub}}
\newcommand{\USub}{\mathsf{USub}}

\newcommand{\set}{\mathsf{set}}
\newcommand{\FU}{\mathsf{FU}} 
\newcommand{\occ}{\mathsf{occ}} 

\newcommand{\BCSL}{\mathsf{BCSL}}
\newcommand{\BlockCSL}[1]{\BCSL^1\text{-}#1\text{-large}(\theta)}

\newcommand{\Cod}{\mathsf{Cod}}

\newcommand{\finsub}{\subseteq_{\mathtt{fin}}}

\newcommand{\bstr}{2^{<\NN}}

\makeatletter
\DeclareFontEncoding{LS2}{}{\@noaccents}
\makeatother
\DeclareFontSubstitution{LS2}{stix}{m}{n}
\DeclareSymbolFont{arrows3}{LS2}{stixtt}{m}{n}
\DeclareMathSymbol{\smashtimes}{\mathbin}{arrows3}{"A4}

\usepackage{mathabx}

\DeclareSymbolFont{bbold}{U}{bbold}{m}{n} 
\DeclareMathSymbol{\bbomega}{\mathord}{bbold}{"7F}

\def\qt#1{``#1''}%

\begin{document}

\maketitle

\begin{abstract}
Carlson and Simpson proved that for every finite coloring of the 1-variable words over a finite alphabet~$A$, there is an infinite $\omega$-variable word on which all the 1-variable words are monochromatic. This statement for $\ell$-colorings, written $\CSL^1_\ell$, is known to be strictly weaker than~$\ACA_0$. We prove that $\RCA_0 + \CSL^1_2$ is a $\forall \Pi^0_4$-conservative extension of~$\RCA_0 + \BSig_2$. Among its consequences, it implies that neither the indivisibility of the universal triangle-free Henson graph for 2-colorings, nor the tree theorem for pairs and two colors, imply $\Sigma^0_2$-induction. This answers a question of Chong, Li, Wang and Yang~\cite{chong2020strength}.
\end{abstract}

\section{Introduction}

A \emph{1-variable word} over a finite alphabet~$A$ is a finite word over the alphabet $A \sqcup \{\star\}$ where the variable~$\star$ appears at least once. An \emph{$\omega$-variable word} over~$A$ is an infinite sequence over the alphabet $A \sqcup \{ x_0, x_1, \dots \}$ where each variable kind~$x_i$ appears at least once, and the first occurrence of~$x_i$ is before the first occurrence of~$x_{i+1}$. Given an $\omega$-variable word~$W$ and a 1-variable word~$v$ of length~$|v|$, we denote by $W[v]$ the 1-variable word obtained by replacing every occurrence of $x_i$ in~$W$ by the $i$th element of~$v$, and cutting at the first occurrence of the variable~$x_{|v|}$.
The Carlson-Simpson Lemma~\cite{carlson1984dual} for 1-variable words ($\CSL^1_\ell$) is the following statement. See \Cref{sect:ramsey-variable-words} for a formal definition.
\begin{quote}
For every $\ell$-coloring of the 1-variable words over a finite alphabet~$A$, there is an $\omega$-variable word~$W$ over~$A$ and some color~$c < \ell$ such that, for every 1-variable word~$v$ over~$A$, $W[v]$ has color~$c$.
\end{quote}
The Carlson-Simpson Lemma is of interest for its connections with the Dual Ramsey theorem~\cite{carlson1984dual}, the existence of big Ramsey degrees for universal structures~\cite{hubivcka2020big}, and its relation to Hindman's theorem~\cite{Hindman1974Finite}. In particular, $\CSL^1_\ell$ restricted to unary alphabets can be formulated as follows, where $[\NN]^{<\omega}$ denotes the collection of all finite sets of integers.
\begin{quote}
For every coloring $f : [\NN]^{<\omega} \to \ell$, there is an infinite sequence of non-empty (finite or infinite) sets $X_0, X_1, \dots$ with $\min X_i < \min X_{i+1}$ and some color~$c < \ell$ such that for every finite set~$F \subseteq \NN$ of size at least~2, $f(\{0, \dots, \min X_{\max F}\} \cap \bigcup_{i \in F} X_i) = c$.
\end{quote}
Note that if we furthermore require that $\max X_i < \min X_{i+1}$, then each~$X_i$ is finite, and the resulting statement is equivalent to Hindman's theorem (see \Cref{prop:fut-ovw1}).

We are interested in the meta-mathematics of the Carlson-Simpson Lemma for 1-variable words from a reverse-mathematical viewpoint. Reverse mathematics is a foundational program whose goal is to find optimal axioms to prove theorems from ordinary mathematics. It uses the framework of subsystems of second-order arithmetic, with a base theory, $\RCA_0$, capturing \emph{computable mathematics}. The theory $\RCA_0$ consists of the axioms of Robinson arithmetic, together with the $\Delta^0_1$-comprehension scheme and $\Sigma^0_1$-induction scheme. See Simpson~\cite{simpson2009subsystems} or Dzhafarov and Mummert~\cite{dzhafarov2022reverse} for an introduction to reverse mathematics.

Among the systems of interest, $\WKL_0$ is the extension of $\RCA_0$ with K\"onig's lemma for binary trees, $\ACA_0$ is the extension of~$\RCA_0$ with the arithmetic comprehension scheme, or equivalently with the statement $\forall X \exists Y(Y = X')$, where $X' = \{ e : \Phi^X_e(e)\downarrow \}$ is the Turing jump of~$X$. Last, $\ACA_0^+$ is the extension of $\RCA_0$ with the statement $\forall X \exists Y(Y = X^{(\omega)})$, where $X^{(\omega)}$ is the $\omega$-jump of $X$ defined inductively as follows: $X^{(0)} = X$, $X^{(n+1)} = (X^{(n)})'$, and  $X^{(\omega)} = \bigoplus_n X^{(n)}$. These systems are linearly ordered by logical strength:
$$
\RCA_0 < \WKL_0 < \ACA_0 < \ACA_0^+
$$
The system $\ACA_0^+$ is famous for being the best known upper bound to Hindman's theorem~\cite{Towsner2012simple} and, until recently, to the Carlson-Simpson Lemma for 1-variable words~\cite{angles2023carlson}. On the other hand, Hindman's theorem implies~$\ACA_0$ over~$\RCA_0$~\cite{blass1987logical}.

In this article, we prove the following theorem, where a $\forall \Pi^0_n$-formula is of the form $\forall X \varphi(X)$ for some $\Pi^0_n$-formula~$\varphi$, and $\BSig_n$ denotes the collection (or bounding) scheme for $\Sigma^0_n$-formulas.

\begin{maintheorem}\label[maintheorem]{thm:csl12-pi04-conservation}
$\WKL_0 + \CSL^1_2$ is $\forall \Pi^0_4$-conservative over~$\RCA_0 + \BSig_2$.
\end{maintheorem}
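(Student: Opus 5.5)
We follow the now-standard template for $\forall\Pi^0_n$-conservation results over $\RCA_0+\BSig_2$, which combines a ``first-jump'' analysis with a largeness/density argument in the style of Patey and Yokoyama and of Le Hou\'erou, Levy Patey and Yokoyama.

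The first step is a reduction of the $\forall \Pi^0_4$-conservation to a combinatorial statement. Suppose $\WKL_0+\CSL^1_2 \vdash \forall X\,\forall a\,\exists b\,\forall c\,\exists d\,\theta(X,a,b,c,d)$ with $\theta$ $\Delta^0_0$, but some countable model $(M,S)\models \RCA_0+\BSig_2$ fails it. By the usual trick we may assume $(M,S)$ is a model of $\WKL_0+\BSig_2$ in which some $X\in S$ witnesses the failure. Fix $a$ such that $\forall b\,\exists c\,\forall d\,\neg\theta$; this is a $\Pi^0_3$ failure. We then build an $\omega$-extension (same first-order part $M$) $S'\supseteq S$ with $(M,S')\models \WKL_0+\CSL^1_2$. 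Since the first-order part is unchanged and $\exists c\,\forall d\,\neg\theta$ is $\Sigma^0_2$ in $X$, the failure is preserved, which gives a contradiction. Models of $\WKL_0$ are handled by the classical Harrington-style forcing with $\BSig_2$-preserving low-basis arguments, so the real work is the following. Given a countable topped model $(M,S)\models\RCA_0+\BSig_2$ with top $Z$ and a $\Delta^0_1(Z)$ 2-coloring $f$ of 1-variable words, we must find an $\omega$-variable word $W$ homogeneous for $f$ such that $(M, S\cup\{W\})$, closed under $\Delta^0_1$, still satisfies $\BSig_2$. Iterating and taking unions yields $S'$.

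The second step, which is the core, is preserving $\BSig_2$. It would be too strong to ask for $\Sigma^0_1$-induction plus a low solution. Instead we aim for a $\BSig_2$-preserving solution by a Mathias-type forcing for $\CSL^1_2$. The conditions are pairs $(w,V)$, where $w$ is a finite variable word (a finite prefix) and $V$ is an $\omega$-variable word in $S$ such that $w$ concatenated with $V$ is still a legitimate reservoir. We first apply the known decomposition of $\CSL^1_\ell$ into a cohesiveness-like step and a $\Delta^0_2$ step; the earlier sections relate $\CSL^1$ to $\OVW$ and Hindman-type statements via \Cref{prop:fut-ovw1}. This reduces matters to stable colorings, where the limit color along $V$ exists. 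Homogeneity is then obtained by ensuring that each substitution $W[v]$ lies in a ``limit-$c$'' region, and this requires a pigeonhole principle for $\CSL^1$ over $\Delta^0_2$ sets that is provable in $\RCA_0+\BSig_2$.

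The main obstacle is forcing $\Sigma^0_1$ formulas, i.e.\ showing that for each $\Sigma^0_1(W\oplus Z)$ formula the set of conditions deciding it, or bounding witnesses, is dense in a way that respects $\BSig_2$. Here we need a finite, quantitative version of $\CSL^1_2$: a notion of ``$\alpha$-largeness'' for finite sets of variable words, analogous to the Ketonen--Solovay largeness used for $\RT^2_2$. The required property is that any 2-coloring of words built over an $\omega^{k}$-large block admits a homogeneous finite variable word that is still $\omega^{k'}$-large. This is the role intended for the macro $\BCSL$, the block Carlson--Simpson lemma. We would prove such a bounded statement in $\RCA_0+\BSig_2$, and even in $\ISig_1$ for each standard $k$, by induction on $k$ following the proof of the finite Carlson--Simpson/Hales--Jewett theorem. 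The resulting bounds are iterated-exponential, and hence are bounded by ordinals below $\omega^\omega$. Density for $\Sigma^0_1$ questions is then proved by compactness: either some large finite extension forces the formula, or the $\Pi^0_1$ class of counterexamples is nonempty and a $\WKL$ member gives a reservoir forcing its negation. The second alternative keeps the bounds uniform, and this uniformity yields $\BSig_2$ in the extension via the standard ``cut'' argument. A final bookkeeping step iterates over all colorings and all $\WKL$ instances in a countable sequence to produce $S'$.
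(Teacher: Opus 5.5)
\paragraph{Gap: your reduction would prove a much stronger, open statement.} You keep the first-order part $M$ fixed and only add sets. But then the restriction to $\forall\Pi^0_4$ sentences plays no role: any arithmetical failure with a parameter $X\in S$ automatically survives in $(M,S')$. So if the argument worked, it would show that $\WKL_0+\CSL^1_2$ is $\Pi^1_1$-conservative over $\RCA_0+\BSig_2$. By \Cref{prop:rt-to-csl}, this would in turn show that $\RT^2_2$ is $\Pi^1_1$-conservative over $\RCA_0+\BSig_2$, which is a well-known open problem and far beyond what the paper proves.

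All the weight rests on one step. Over an \emph{arbitrary} countable model of $\BSig_2$, where $\ISig_2$ may fail, you must produce a homogeneous $\omega$-variable word $W$ such that $\ISig_1$ and $\BSig_2$ still hold relative to $W$. Nobody knows how to do this even for $\RT^2_2$. Your density argument (either a finite extension forces the $\Sigma^0_1$ formula, or a $\WKL$-member of the class of counterexamples gives a reservoir forcing its negation) is the $\omega$-model, cone-avoidance pattern. It gives no control over the $\Sigma^0_2$-level bookkeeping that induction preservation requires when $\ISig_2$ fails. The ``standard cut argument'' you invoke does not yield $\BSig_2$ in an $\omega$-extension; it yields a \emph{new} model whose first-order part is a cut.

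\paragraph{How the paper uses largeness instead.} The paper uses largeness as an indicator, not as a forcing tool inside a fixed model. Let $\theta$ be the $\Delta^0_0$ matrix of the negated sentence with its parameters fixed, so that $\M\models\RCA_0+\BSig_2+\forall x\exists y\forall z\,\theta$. By \Cref{prop:largeness-bsig2-largeness} and overspill, $\M$ contains an $\bbomega^d$-large$(\theta)$ set with $d$ nonstandard. The closure theorems \Cref{thm:rt22-bound} and \Cref{thm:block-homogeneous-bound} then give a decreasing sequence $X_0\supseteq X_1\supseteq\cdots$ whose minima determine a semi-regular cut $I$ with $(I,\Cod(M/I))\models\WKL_0+\RT^2_2+\RFCSL^1$. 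Meanwhile, $\theta$-apartness transfers $\forall x\exists y\forall z\,\theta$ down to $I$. Because the first-order part shrinks, only this $\Pi^0_3$ property is preserved, which is exactly why the result is $\forall\Pi^0_4$. \Cref{lem:csl-rfcsl-rt} then yields $\CSL^1_2$.

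\paragraph{Missing combinatorial idea.} What is needed is a primitive recursive $f$ such that every $\bbomega^{f(n)}$-large$(\theta)$ set admits $\bbomega^n$-large$(\theta)$ solutions. Iterated-exponential cardinality bounds from a Hales--Jewett-style proof do not provide this. The paper splits off $\RT^2_2$ and proves the bound for block-homogeneity. The point of block-homogeneity is that there are only $\ell^{2n}$ possible block colors, as opposed to $\ell^{|X|}$ for the level version. This is what lets the pigeonhole steps over blocks in the inductive argument go through.

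Finally, \Cref{prop:fut-ovw1} is irrelevant here: $\OVW^1_2(1)$ implies $\ACA_0$.
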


As $\RCA_0 \vdash \CSL^1_2 \to \CSL^1_\ell$ for every standard $\ell \in \omega$, it follows that $\WKL_0 + \CSL^1_\ell$ is also $\forall \Pi^0_4$-conservative over~$\RCA_0 + \BSig_2$. On the other hand, the statement $\forall \ell \CSL^1_\ell$ implies $\BSig_3$ over~$\RCA_0$, and therefore is not even $\Pi_1$-conservative over~$\RCA_0 + \BSig_2$ (see \Cref{prop:csl1-bsig3}).
By \Cref{thm:csl12-pi04-conservation}, $\WKL_0 + \CSL^1_2$ does not imply~$\ACA_0$ or even $\Sigma^0_2$-induction. The former fact was proven by Liu and Patey~\cite{liu2026reverse}, and shows a fundamental difference with Hindman's theorem. In particular, since $\WKL_0 + \CSL^1_2$ proves the indivisibility of the universal triangle-free Henson graph for 2-colorings, it follows that the latter does not imply $\Sigma^0_2$-induction (see \Cref{sec:triangle-free}).

\subsection{Organization of the paper}

The proof of \Cref{thm:csl12-pi04-conservation} uses a refinement of the indicator method of Kirby and Paris~\cite{kirby1977initial} using a parameterized variant of Ketonen and Solovay's $\alpha$-largeness~\cite{ketonen1981rapidly}. In \Cref{sect:ramsey-variable-words}, we review the Ramsey theory of variable words, and their analysis in reverse mathematics. Then, in \Cref{sec:parameterized-largeness}, we introduce the framework of parameterized largeness. In \Cref{sec:largeness-gr1} and \Cref{sec:largeness-csl1}, we prove the combinatorial core of our theorem by showing that this notion of largeness is closed under applications of the Graham-Rothschild theorem and the Carlson-Simpson theorem for 1-variable words. In \Cref{sect:conservation}, we prove \Cref{thm:csl12-pi04-conservation} using these combinatorial results, and study its relations with the indivisibility of the universal triangle-free graph and the tree theorem for pairs. Last, in \Cref{sect:open-questions}, we conclude with open questions.

\section{Ramsey theory of variable words}\label[section]{sect:ramsey-variable-words}

In this section, we formally introduce the concepts and theorems from Ramsey theory that will be used throughout the paper\footnote{There is a difference of terminology between the literature from combinatorics and from reverse mathematics. In combinatorics, an $n$-variable word is ordered by default, while an unordered $n$-variable word is called an $n$-parameter word. The Ordered Variable Word theorem studied in reverse mathematics is called the Carlson-Simpson theorem in combinatorics, which should not be confused with the Carlson-Simpson Lemma studied in reverse mathematics. The latter corresponds to the unordered version of the Ordered Variable Word theorem.}. See Graham, Rothschild and Spencer~\cite{graham2013ramsey} or Todorcevic~\cite{todorcevic2010introduction} for general introduction to Ramsey theory, and Dodos and Kanellopoulos~\cite{dodos2016ramsey} for a particular focus to Ramsey's theory for product spaces.

\subsection{Variable words}\label[section]{sect_intro-vw}

We identify a non-negative integer $k \in \NN$ with the set $\{0, \dots, k-1\}$.
A \emph{word} over a finite alphabet $A$ is a finite ordered sequence $w = \langle a_0, \dots, a_{n-1} \rangle \in A^n$ for some $n \in \NN$. An \emph{infinite word} over $A$ is a function $W : \NN \to A$. We denote by $A^{<\omega}$ and $A^\omega$ the sets of finite and infinite words over $A$, respectively. For $w = \langle a_0, \dots, a_{n-1} \rangle$, we write $|w|$ for the \emph{length} $n$ of the word~$w$ and given $i < n$, we let $w(i) = a_i$.

An \emph{$n$-variable word} over $A$ is a word $w$ over the alphabet $A \sqcup \{ x_j : j < n \}$ where each $x_j$ appears at least once and the first occurrence of $x_j$ appears before the first occurrence of $x_{j+1}$. If the last occurrence of $x_j$ appears before the first occurrence of~$x_{j+1}$, we say that $w$ is an \emph{ordered $n$-variable word}. We call $n$ the \emph{dimension} of the $n$-variable word $w$. 
Denote by $A^{<\omega, n}$ and $A^{<\omega, n}_{<}$ the sets of unordered and ordered $n$-variable words over $A$, respectively. Note that $A^{<\omega} = A^{<\omega, 0}$, and that $A^{<\omega, n} \subseteq (A \sqcup \{x_0, \dots, x_{n-1}\})^{<\omega}$.

An \emph{$\omega$-variable word} over $A$ is an infinite word $W$ over the alphabet $A \sqcup \{ x_j : j \in \NN \}$ where each variable kind $x_j$ appears at least once and the first occurrence of $x_j$ appears before the first occurrence of $x_{j+1}$. The notion of \emph{ordered $\omega$-variable word} is defined accordingly. Note that each variable kind in an $\omega$-variable word can appear infinitely often, while they occur necessarily finitely often in an ordered $\omega$-variable word. We write $A^{\omega, \omega}$ and $A^{\omega, \omega}_{<}$ for the set of all unordered and ordered $\omega$-variable words over $A$, respectively.

Given an (ordered) $\omega$-variable word $W$ over $A$ and an (ordered) $n$-variable word $u$ over $A$, we write $W[u]$ for the finite (ordered) $n$-variable word over $A$ where each occurrence of $x_j$ is replaced by $u(j)$, and cut before the first occurrence of $x_{|u|}$. In particular, letting $\epsilon$ be the empty word, $W[\epsilon]$ is the initial segment of $W$ before the first occurrence of $x_0$. The substitution notation $W[a]$ must not be confused with $W(i)$: the former notation denotes the finite word obtained by substitution of all the occurrences of~$x_0$ by the letter~$a$ and cutting before the first occurrence of~$x_1$, while the latter notation denotes the letter in $W$ at position~$i$. The variable kind universe $V = \{ x_0 < x_1 < \dots \}$ might differ between variable words to avoid name clash or for simplicity. It might be with other symbols $\{ y_0 < y_1 < \dots \}$ or range over a subset $\{ x_{i_0} < x_{i_1} < \dots \}$. It will always be disjoint from the alphabet~$A$.


Given $n \in \NN$ and an $\omega$-variable word~$W$, we let
$$
\USub^{n,\star}_{A}(W) = \{ W[u] : u \in A^{<\omega, n} \}
$$
The ordered counterpart of $\USub^{n,\star}_A(W)$ is written $\OSub^{n,\star}_{A}(W)$.

\subsection{Infinite variable word theorems}

Carlson and Simpson~\cite{carlson1984dual} studied a dual version of Ramsey's theorem, and proved for this a purely combinatorial statement about colorings of variable words, known in reverse mathematics as the Carlson-Simpson Lemma:

\begin{theorem}[Higher-order Carlson-Simpson Lemma]\label[theorem]{thm_ho-csl}
Fix $n \geq 0$ and $\ell \geq 1$.
For every finite alphabet $A$ and every finite partition $C_0 \sqcup \dots \sqcup C_{\ell-1} = A^{<\omega, n}$, 	
there is some color $i < \ell$ and an infinite $\omega$-variable word $W$ such that $\USub^{n,\star}_A(W) \subseteq C_i$.
\end{theorem}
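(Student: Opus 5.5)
This statement is the classical theorem of Carlson and Simpson, so the plan is to follow their argument: reduce the $n$-variable case to the 1-variable case, and obtain the 1-variable case from idempotent ultrafilters. I proceed by induction on $n$.

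\textbf{Case $n=0$.} A coloring of $A^{<\omega}$ is given. Fix the $\omega$-variable word $W = x_0x_1x_2\cdots$. Then $W[u]=u$ for every $u \in A^{<\omega}$, so $\USub^{0,\star}_A(W)=A^{<\omega}$ and this case is degenerate. The real content starts at $n=1$.

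\textbf{Case $n=1$ (the core step).} I would use the semigroup of variable words under concatenation. Consider the compact right-topological semigroup $\beta(A^{<\omega}\sqcup A^{<\omega,1})$. The set of ultrafilters concentrating on $A^{<\omega,1}$ is a closed two-sided ideal $I$, and every $a\in A$ induces a substitution homomorphism $\sigma_a$ (replace $\star$ by $a$). The Furstenberg--Katznelson / Bergelson--Blass--Hindman argument produces a minimal idempotent $p\in \beta A^{<\omega}$ and an idempotent $q\in I$ with $q\le p$ and $\sigma_a(q)=p$ for all $a\in A$. Given the partition, some cell $C_i$ lies in $q$.

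I then build $W$ block by block, Galvin--Glazer style. I choose 1-variable words $w_0,w_1,\dots$ and constant words, maintaining the invariant that every finite substitution-concatenation of the blocks built so far lies in a $q$-large (respectively $p$-large) set whose shift by $q$ (respectively $p$) is still large. Using $q\le p$ and $\sigma_a(q)=p$, the relevant sets $C_i^\star$ and their substituted images stay large at each stage. Unordered occurrences of $x_j$ are handled by letting $x_j$ reappear in later blocks. Concretely, $W[v]$ for $v\in A^{<\omega,1}$ of length $m$ is a concatenation of blocks, each either a substitution instance $\sigma_a(w_k)$ or a variable instance of $w_k$. One must ensure at least one variable instance appears, which holds because $\star$ occurs in $v$.

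\textbf{Inductive step (higher $n$).} I follow Carlson--Simpson. Given a coloring $c$ of $A^{<\omega,n+1}$, I treat the last variable as a fresh letter, which turns it into a coloring of $n$-variable words over an enlarged alphabet. I then interleave the 1-variable case with the inductive hypothesis via a fusion argument: construct $W$ one variable at a time, at each step thinning the tail so that the color of $W[u]$ depends only on the part of $u$ determined before the last variable. Finally I apply the $n$-case to the induced coloring.

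The main obstacle is the $n=1$ construction, specifically obtaining the idempotents $q\le p$ with $\sigma_a(q)=p$. If I want to avoid ultrafilters, my fallback is the combinatorial proof via the Hales--Jewett theorem together with a compactness/diagonalization argument.
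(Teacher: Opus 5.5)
\textbf{The case $n=0$ is wrong.} You correctly compute that $W=x_0x_1x_2\cdots$ gives $W[u]=u$, so $\USub^{0,\star}_A(W)=A^{<\omega}$. But the theorem requires $\USub^{0,\star}_A(W)\subseteq C_i$, so this $W$ works only for a constant coloring. The zero-dimensional case is the Carlson--Simpson theorem itself: for $|A|=1$ it is exactly the pigeonhole principle $\RT^1_\ell$, and for $|A|\geq 2$ it already contains Hales--Jewett. It is also the base case from which the paper, following Carlson and Simpson, obtains every $n$ by induction. As written, your proof does not cover $n=0$.

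You can repair this from your $n=1$ case. Given $c$ on $A^{<\omega}$, fix $a\in A$ and color $v\in A^{<\omega,1}$ by $c(v[a])$. Take a monochromatic $W$, and let $W'$ be obtained from $W$ by substituting $a$ for $x_0$ and renaming $x_{i+1}$ as $x_i$. Then $W'[u]=W[\star u][a]$ for all $u\in A^{<\omega}$, so $W'$ is $c$-monochromatic.

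\textbf{The inductive step is missing its engine.} Making the color of $W[s\,x_n\,t]$ independent of $t$ requires the \emph{zero-dimensional} theorem over enlarged alphabets, and your sketch never says what is applied there. Applying the $n$-case over $A\sqcup\{x_n\}$ (your ``fresh letter'') can return a word containing $x_n$ as a constant letter, which is not an $\omega$-variable word over $A$. What works is a fusion:
\begin{itemize}
\item At stage $m$, fix the prefix $P$ before the first $y_m$, and regard the tail $T$ as an $\omega$-variable word over $A_m=A\sqcup\{y_0,\dots,y_m\}$.
\item Color each word $r$ over $A_m$ by $\langle c(P[s]\,x_n\,r[y_j\mapsto s(j),\,y_m\mapsto x_n]) : s\in A^{m,n}\rangle$, and apply the zero-dimensional theorem to $T$.
\item An occurrence of $x_j$ ($j<n$) in $t$ is simulated by the letter $y_{j'}$, where $j'$ is the first occurrence of $x_j$ in $s$. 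Later stages preserve the property.
\end{itemize}
This reuse of old variables in the tail is exactly why $W$ comes out unordered. Finally, apply $\CSL^n$ to $s\mapsto c(W[s\,x_n])$ and compose.

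\textbf{The $n=1$ step is overkill.} Your ultrafilter argument yields $W=w_0[x_0]w_1[x_1]\cdots$, so it proves the ordered statement $\OVW^1$. Already for unary alphabets this is equivalent to the finite union theorem and implies $\ACA_0$ (\Cref{prop:fut-ovw1}, \Cref{cor:ovw121-aca}). It is valid, but far stronger than necessary. The remark about letting $x_j$ reappear is also not needed when $n=1$.
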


We write $\CSLD n k \ell$ for the statement of \cref{thm_ho-csl} restricted to $\ell$-colorings of $n$-variable words over finite alphabets of size $k$. If a parameter is omitted, then it will be assumed to be universally quantified. Thus, $\CSL^1_2$ is the statement $\forall k\CSL^1_2(k)$. The Higher-order Carlson-Simpson Lemma admits a simple inductive proof on~$n$, starting from its zero-dimensional version~\cite{carlson1984dual}. 

The Carlson-Simpson lemma belongs to a long line of tree partition theorems, such as the tree theorem~\cite{chubb2009reverse,corduan2010reverse,patey2016strength,chong2020strength,chong2023conservation} and Milliken's tree theorem~\cite{milliken1979ramsey,angles2020milliken}, among others. These partition theorems serve as a backbone of many other combinatorial theorems, such as statements of the existence of finite big Ramsey degrees. In particular, $\CSL$ was used to reprove a theorem by Dobrinen~\cite{dobrinen2020ramsey} stating that the universal triangle-free Henson graph admits finite big Ramsey degree~\cite{hubivcka2020big,angles2023carlson}.

The higher-order Carlson-Simpson lemma admits an ordered variable word counterpart, whose case $n = 0$ was proven by Carlson and Simpson~\cite{carlson1984dual}, and which follows from Carlson's theorem~\cite{carlson1988some} in the general case:

\begin{theorem}[Higher-order Ordered Variable Word theorem]\label[theorem]{thm_ho-ovw}
Fix $n \geq 0$ and $\ell \geq 1$.
For every finite alphabet $A$ and every finite partition $C_0 \sqcup \dots \sqcup C_{\ell-1} = A^{<\omega, n}_{<}$, 	
there is some color $i < \ell$ and an infinite ordered $\omega$-variable word $W$ such that $\OSub^{n,\star}_{A}(W) \subseteq C_i$.
\end{theorem}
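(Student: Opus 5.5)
The plan is to prove \cref{thm_ho-ovw} with the algebraic (idempotent ultrafilter) method. This is the route of Bergelson, Blass and Hindman, and it makes all dimensions $n$ uniform. It is also what underlies Carlson's theorem~\cite{carlson1988some}.

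\textbf{Setting up the semigroups.} Let $S_0 = A^{<\omega}$ and let $S_1$ be the set of finite words over $A \sqcup \{x\}$ in which $x$ occurs. Both are semigroups under concatenation, $S_0 \cup S_1$ is a semigroup, and $S_1$ is a two-sided ideal in it. For each $a \in A$, substitution $x \mapsto a$ is a homomorphism $\sigma_a : S_0 \cup S_1 \to S_0$ that is the identity on $S_0$. These maps extend continuously to the Stone--\v{C}ech compactifications. The first step is the standard fact that there are idempotents $p \in \beta S_0$ and $q \in \beta S_1$ with $q \leq p$ (that is, $q = pq = qp$) and $\sigma_a(q) = p$ for every $a \in A$. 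One takes $p$ minimal in $\beta S_0$, finds an idempotent $q \leq p$ in the closed ideal $\beta S_1$, and checks that $\sigma_a(q)$ is an idempotent $\leq \sigma_a(p) = p$, hence equal to $p$ by minimality.

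\textbf{Decomposing ordered words.} An ordered $n$-variable word $u$ over $A$ factors uniquely as $u = w\, v_0 v_1 \cdots v_{n-1}$. Here $w \in S_0$, and $v_j$ is the segment running from the first occurrence of $x_j$ up to just before the first occurrence of $x_{j+1}$ (or to the end of $u$ when $j = n-1$). Orderedness guarantees that $v_j$, read with $x_j$ renamed $x$, lies in $S_1$. Consider the product ultrafilter $r_n = p \cdot q \cdot q \cdots q$, with $n$ copies of $q$, on $n+1$ coordinates. Given a partition $C_0 \sqcup \dots \sqcup C_{\ell-1}$ of $A^{<\omega, n}_{<}$, transport it along this factorization. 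Some $C_i$ then becomes $r_n$-large, and we fix that colour $i$.

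\textbf{Galvin--Glazer construction.} We build $W = b_0 b_1 b_2 \cdots$ by recursion, where each block $b_k$ lies in $S_1$ and uses the variable $x_k$. At each stage we keep a shrinking family of ``star'' sets, in the style of the proof of Milliken--Taylor's theorem. Idempotence of $p$ and $q$, together with $q \le p$, lets us pick each new block so that all finite concatenations of chosen blocks remain large at every relevant coordinate. This covers both the constant form $\sigma_a(b_k)$ and the variable form $b_k$. The relation $\sigma_a(q) = p$ is what allows a block in which a letter is substituted to be absorbed into the constant coordinate governed by $p$. Then for any $u \in A^{<\omega, n}_{<}$, the word $W[u]$ is a concatenation of consecutive blocks $b_k$. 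The blocks before the first $k$ with $u(k) = x_0$ are substituted by letters, and each later maximal run with $u(k) \in A \cup \{x_j\}$ forms the $v_j$ component. This factorization lands in the chosen $r_n$-large set, so $W[u] \in C_i$.

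\textbf{Main obstacle.} I expect the hard step to be this bookkeeping: matching the recursive choice of blocks to the iterated product $r_n$, so that every way $u$ groups consecutive blocks, with letters mixed inside the groups, is controlled. I would first handle $n = 0$, which is the Carlson--Simpson case and needs only $p$ and the $\sigma_a$. I would then do $n = 1$ by hand, to fix the form of the star-set invariant: for each finite sequence of already-chosen blocks, the set of possible continuations is $q$-large. After that, I would state the general invariant by induction on the number of variable groups already opened.
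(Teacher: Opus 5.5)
\textbf{Gap: blocks drawn from $q$ do not begin with their variable, so $W[u]$ is not a concatenation of whole blocks.} Your algebraic ingredients are correct: $p$ minimal in $\beta S_0$, an idempotent $q\le p$ in $\beta S_1$, and $\tilde\sigma_a(q)=p$. The failing step is the claim that $W[u]$ is a concatenation of consecutive blocks $b_k$. This is exactly where the statement, already for $n=0$ (the Carlson--Simpson theorem), is harder than the infinitary Hales--Jewett theorem.

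Since $p\in K(\beta S_0)$ is nonprincipal and $q=pq$, the set $L$ of words beginning with $x$ is not in $q$, because $s^{-1}L=\emptyset$ for $s\neq\epsilon$. So $q$-almost every block has the form $b_k=c_k b_k'$ with $c_k\in S_0$ nonempty and $b_k'\in L$. As $W[u]$ is cut before the first occurrence of $x_{|u|}$, you get $W[u]=b_0[u(0)]\cdots b_{m-1}[u(m-1)]\cdot c_m$ with $m=|u|$. Likewise, the first occurrence of each $x_j$ in $W[u]$ lies strictly inside a block, not at its start. Two readings of your transport step, both failing:
\begin{itemize}
\item Along the canonical factorization (each $v_j$ beginning with its variable): the pulled-back classes live on $S_0\times L^n$, which is $r_n$-null, so no class is $r_n$-large.
\item Along plain concatenation on $S_0\times S_1^n$: your recursion controls $W[u]$ without the tail $c_m$. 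The tail is distributed according to $\tilde\pi(q)$, where $\pi$ takes the constant prefix before the first $x$. From $q=pq$ you only get $\tilde\pi(q)=p\,\tilde\pi(q)$, not $\tilde\pi(q)=p$.
\end{itemize}
Moreover, $W=b_0b_1\cdots$ has no leading constant word, so $W[\epsilon]$ and the constant part of $W[u]$ when $u(0)=x_0$ are uncontrolled. Your remark that $n=0$ ``needs only $p$ and the $\sigma_a$'' reflects the problem: those ingredients give monochromatic products $\sigma_{a_0}(b_0)\cdots\sigma_{a_{m-1}}(b_{m-1})$ of whole blocks, not the Carlson--Simpson conclusion.

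\textbf{Repair.} An extra ingredient is needed. Since $L$ is a right ideal of $S_0\cup S_1$, $\beta L\cdot p$ is a compact right topological semigroup; take an idempotent $r$ in it. Then $r=rp$, so each $g_a=\tilde\sigma_a(r)$ is an idempotent of the minimal left ideal $\beta S_0\,p$. Hence $pg_a=p$, $g_ag_b=g_a$ and $rg_a=r$. Build $W=w\,\beta_0\beta_1\cdots$ with $w$ chosen from $p$ and left-variable blocks $\beta_k$ chosen from $r$. Now $W[u]$ is exactly $w$ followed by whole substituted blocks, and the canonical factorization matches the block grouping. The Milliken--Taylor recursion can then be run for $p\otimes r^{\otimes n}$. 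Take star sets relative to $\{p,g_a : a\in A\}$ for the constant part and relative to $\{r,g_a,g_ar : a\in A\}$ for the variable parts. Both families are closed under products and are absorbed on the left by $p$, resp.\ $r$, which is what the star-set argument needs.

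Alternatively, keep your framework but replace $q$ by $pr$. This is still an idempotent $\le p$ with $\tilde\sigma_a(pr)=p$, and moreover $\tilde\pi(pr)=p$. You must then add to each step the requirement that the constant prefix of the new block completes every pending word into the colour class.

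For comparison, the paper gives no proof of this theorem: it cites Carlson and Simpson for $n=0$ and derives the general case from Carlson's theorem. Once repaired, your route would be a direct, self-contained ultrafilter proof.
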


We write $\OVWD n k \ell$ for the statement of \cref{thm_ho-ovw} restricted to $\ell$-colorings of ordered $n$-variable words over finite alphabets of size $k$. 

The statement $\OVWD 0 k \ell$ was originally proven using a Baumgartner-style argument~\cite{carlson1984dual}, and admits multiple quantitative refinements, such as a density version~\cite{dodos2014density} and a piecewise syndetic version. More recently, Liu and Patey~\cite{liu2026reverse} gave a proof of $\OVWD 0 k \ell$ following the style of Towsner~\cite{Towsner2012simple}.

The statement $\OVWD 1 1 \ell$ is a slight variation of the Finite Union theorem, or equivalently Hindman's theorem~\cite{Hindman1974Finite}, which is also known to admit a purely combinatorial version due to Towsner~\cite{Towsner2012simple}. 
On the other hand, the general statement is a consequence of Carlson's theorem for extracted variable words~\cite{carlson1988some} whose only known proofs use topological dynamics or ultrafilters, except for its zero-dimensional version~\cite{bompard2025reverse}. Note that Carlson's theorem admits an inductive proof using its one-dimensional version as base case, so the existence of a combinatorial proof of Carlson's theorem and of $\OVWD n k \ell$ is reduced to the existence of a combinatorial proof of $\CTD 1 k \ell$. Thankfully, the known applications of the higher-order variable word theorems are already consequences of their unordered versions.

\subsection{Finite variable word theorems}

All these variable word statements admit finitary counterparts thanks to a compactness argument. There exist also direct proofs in the spirit of the celebrated Hales-Jewett theorem. Following the new proof of the Hales-Jewett theorem by Shelah~\cite{shelah1988primitive}, all the considered finitary theorems admit primitive recursive bounds~\cite{dodos2016ramsey}. 

Given an (ordered) $n$-variable word~$w$ over~$A$ and an (ordered) $m$-variable word~$u$ over~$A$, with~$|u| \leq n$, we write $w[u]$ for the finite (ordered) $m$-variable word over~$A$ where each occurrence of~$x_j$ is replaced by $u(j)$, and cut before the first occurrence of $x_{|u|}$ if $|u| < n$. If $|u| = n$, then $w[u]$ is not cut and $|w[u]| = |w|$. 
Given $m \leq n \in \NN$ and a $n$-variable word~$w$, we let
$$
\USub^{m,\star}_{A}(w) = \{ w[u] : u \in A^{<n, m} \} \hspace{10pt}\mbox{ and }\hspace{10pt} \USub^{m,=}_{A}(w) = \{ w[u] : u \in A^{n, m} \}
$$
We also write $\USub^{=}_A(w)$ for $\bigcup_{m \leq n} \USub^{m,=}_A(w)$.
The ordered counterpart of $\USub^{\square}_A(w)$ is written $\OSub^{\square}_{A}(w)$, were $\square$ is a placeholder for the various notations.

We shall be interested in the two following finitary theorems:
The first theorem is the Graham-Rothschild theorem about unordered variable words~\cite{graham2013ramsey}, whose primitive recursive bounds were proven by Shelah~\cite{shelah1988primitive}. There exists an ordered variable word counterpart, proven by Dodos and Kanellopoulos~\cite[Theorem 2.15]{dodos2016ramsey}. 


\begin{theorem}[Graham-Rothschild, $\RCA_0$]\label[theorem]{thm:graham-rothschild}
There exists a primitive recursive function~$GR(k, d, m, \ell)$ such that for every $n \geq GR(k, d, m, \ell)$, every $n$-variable word $w$ over an alphabet~$A$ of size~$k$ and every coloring $f : \USub^{m,=}_A(w) \to \ell$, there exists some $v \in \USub^{d,=}_A(w)$ such that $\USub^{m,=}_A(v)$ is $f$-monochromatic.
\end{theorem}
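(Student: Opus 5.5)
This is the Graham–Rothschild theorem with primitive recursive bounds, stated inside $\RCA_0$. My plan is to formalize Shelah's proof~\cite{shelah1988primitive} as presented in Dodos and Kanellopoulos~\cite{dodos2016ramsey}. Since $GR$ is primitive recursive, it is provably total in $\RCA_0$, because $\RCA_0$ proves the totality of all primitive recursive functions via $\Sigma^0_1$-induction. The statement for fixed parameters concerns finite objects only, so it is a $\Pi^0_1$ (indeed $\Delta^0_0$ relative to $GR$) property of $k,d,m,\ell,n$. I will therefore carry out each inductive step as an induction on a bounded formula, which $\RCA_0$ supports.

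\textbf{Base case $m=0$.} This is the multidimensional Hales--Jewett theorem: colour the constant words $w[u]$ with $u\in A^{n}$ and find a $d$-dimensional combinatorial subspace. I will prove it with Shelah's insertion lemma (the cube lemma). Split $w$ into blocks. Repeatedly apply the finite pigeonhole principle to pairs of adjacent letters $a,b$: after enough blocks, the colour of a substitution is insensitive to swapping $a$ and $b$ in the distinguished block. This yields a primitive recursive bound $HJ(k,\ell)$ for one variable, and iterating gives $d$ variables. Every quantity appearing is a composition of iterated exponentials, so the bound is primitive recursive, and each step is a finitary combinatorial argument with bounded quantifiers.

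\textbf{Induction on $m$.} Given the result for $m$, I derive it for $m+1$ in the standard way. View $w$ as a concatenation: a long initial part $w_0$ of $N$ variables followed by a short final part. An $(m+1)$-variable substitution is determined by where the last new variable $x_m$ first appears. Following Graham--Rothschild, I fix the "last block" and reduce colourings of $(m+1)$-variable subwords to colourings of $m$-variable subwords of the initial part, with the number of colours blown up to $\ell^{(\text{number of extensions})}$, which is a primitive recursive quantity. I apply the induction hypothesis with this enlarged colour count, then use Hales--Jewett on the tail to stabilize the dependence on the last variable's assignments. The induction is over the standard number of steps in $m$, and the nested function is defined by primitive recursion on $m$. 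So $GR(k,d,m,\ell)$ is primitive recursive, and the correctness proof is a $\Pi^0_1$-induction on $m$ (equivalently $\Sigma^0_1$), available in $\RCA_0$.

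\textbf{Main obstacle.} The delicate step is the combinatorial reduction in the induction step for unordered words. Occurrences of $x_m$ may interleave with earlier variables, so the colour of $w[u]$ depends on the positions of $x_m$ scattered through the whole word, not only on a final block. I would first handle this by treating the restriction of $u$ to each block of $w$ as an element of a finite set of "block types" (words over $A\sqcup\{x_0,\dots,x_m\}$), absorbing all such data into the colour. This step is where most of the bookkeeping lies. Its formalization cost in $\RCA_0$ is low once the explicit bound is written down, since checking the conclusion for a given $n$ is a finite search.
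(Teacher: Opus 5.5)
The paper gives no proof of this statement. It quotes the theorem from Graham, Rothschild and Spencer and attributes the primitive recursive bounds to Shelah. Your overall frame matches that citation and is sound: Shelah's primitive recursive Hales--Jewett for $m=0$, a bound defined by primitive recursion with parameter substitution, and a $\Pi^0_1$-induction in $\RCA_0$. That induction is legitimate once the canonical isomorphism $\iota_w$ and monotonicity in $n$ make the statement for fixed parameters bounded. Be aware, though, that the paper credits Dodos and Kanellopoulos only with the \emph{ordered} counterpart. That reference therefore does not by itself cover the unordered induction step, and the unordered step is exactly where your sketch breaks.

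The gap is in the passage from $m$ to $m+1$. With a single split $w = w_0\cdot w_1$, colouring each $m$-subword of $w_0$ by the tuple of colours of its extensions only captures the $(m+1)$-subwords whose last variable $x_m$ first occurs in $w_1$. But $v$ must make \emph{all} of $\USub^{m+1,=}_A(v)$ monochromatic. As soon as $d \geq m+2$, many of these have $x_m$ starting inside the part of $v$ that comes from $w_0$. The hypothesis for $m$ says nothing about them, and nothing forces the colour of your captured class to agree with theirs.

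Your ``block types'' remedy does not close this. Putting more data into the colour only enlarges the palette; it never makes the colour independent of anything. Note also that $x_m$ never occurs before its first occurrence; the real interleaving problem is that $x_0,\dots,x_{m-1}$ may recur after it.

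What is missing is an end-homogeneity construction that covers every possible position of the first $x_m$. Split $w$ into many blocks of rapidly growing dimension and process them from right to left. In each block, apply Hales--Jewett over the extended alphabet $A\sqcup\{x_p : p \text{ in earlier blocks}\}$ to a product colouring indexed by \emph{all} valuations of the earlier, not yet thinned, blocks. This ensures later thinning cannot destroy the invariance; it is the mechanism of Steps i.a--i.b in \Cref{prop:GR1-blocks}.

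Even then, the residual dependence must fit an inductive hypothesis. If the colour depends on the prefix before the first $x_m$, that prefix has variable length, so the full-length statement for $m$ does not apply. You then have two options. One is to strengthen the induction to the all-lengths version, whose base case is the finite Carlson--Simpson theorem (\Cref{thm:finite-ovw} with $m=0$) rather than plain Hales--Jewett. The other, as in \Cref{prop:GR1-blocks}, is to kill the dependence on every block not containing a first occurrence, and finish with pigeonhole or Ramsey arguments across blocks plus recursion inside blocks.
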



The second theorem is the finitary version of the Ordered Variable Word theorem, proven by Dodos, Kanellopoulos and Tyros~\cite{dodos2014density}. 

\begin{theorem}[Finite OVW, $\RCA_0$]\label[theorem]{thm:finite-ovw}
There exists a primitive recursive function~$OVW(k, d, m, \ell)$ such that for every $n \geq OVW(k, d, m, \ell)$, every ordered $n$-variable word $w$ over an alphabet~$A$ of size~$k$ and every coloring $f : \OSub^{m,\star}_{A}(w) \to \ell$, there exists some $v \in \OSub^{d,=}_{A}(w)$ such that $\OSub^{m,\star}_{A}(v)$ is $f$-monochromatic.
\end{theorem}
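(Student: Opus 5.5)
The plan is to prove the statement by a nested induction, first on~$m$ and then on~$d$, with an explicit primitive recursive bound attached to each inductive step. The conclusion for fixed $k,d,m,\ell,n$ quantifies only over finite objects bounded by~$n$: words of length at most~$n$ over a $k$-letter alphabet, and colorings of them. So ``$n \geq OVW(k,d,m,\ell)$ implies the conclusion'' is a $\Pi^0_1$ statement in the parameters. Primitive recursive functions are provably total in $\RCA_0$, and induction on a $\Pi^0_1$ formula is available there. Hence it suffices to give a recursion defining $OVW$ and to check that each recursive step is a finite combinatorial argument using only the Hales-Jewett theorem with Shelah's primitive recursive bound. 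That bound is the case $m=0$, $d=1$ of \Cref{thm:graham-rothschild}, which we take as given.

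\emph{Case $m=0$.} Here $f$ colors $\OSub^{0,\star}_A(w)$, the constant words obtained by substituting letters into an initial block of variables and cutting. I would build $v$ one variable at a time, from the last variable backwards, in the style of the finite Carlson-Simpson theorem. Split $w$ into consecutive variable blocks $B_0,\dots,B_{d}$, each large enough for Hales-Jewett. Inside the last block, choose a 1-variable subword $y_{d-1}$ such that the color of $p\,y_{d-1}[a]$ does not depend on $a\in A$. This must hold uniformly over every possible prefix $p$ coming from the earlier blocks. We achieve it by applying Hales-Jewett to the product coloring $a \mapsto (f(p\,y[a]))_p$, which uses $\ell^{k^{N}}$ colors, where $N$ is the number of variables in the earlier blocks. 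That color count is still primitive recursive in the block sizes. We then step back to the previous block, coloring each prefix by the now-stabilized color of its extensions.

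After $d$ steps, the color of a member of $\OSub^{0,\star}_A(v)$ depends only on how many variables were substituted before the cut, a number $j\le d$. One more application of pigeonhole, to a $v$ built with $\ell\cdot d$ variables in place of $d$, then yields monochromaticity. The bound is an iterated exponential tower of Hales-Jewett numbers, hence primitive recursive.

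\emph{Inductive step $m\to m+1$.} Given $f$ on $\OSub^{m+1,\star}_A(w)$, split each ordered $(m+1)$-variable word $u$ into its part $u^-$ up to the first occurrence of the last variable $x_m$ and the remainder. I would then repeat the backwards block construction. At each stage, the induction hypothesis for $m$ is applied inside the earlier blocks, to colorings indexed by the finitely many possible continuations in the later, already stabilized blocks. Because the words are ordered, the last variable's occurrences form a contiguous segment. This is where orderedness is essential, and it is what makes the ``prefix/suffix'' decomposition compatible with substitution.

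The main obstacle I expect is the bookkeeping in this inductive step. A single substitution $v[u]$ can place the cut and the last variable in different blocks. The stabilization must therefore hold simultaneously for all relative positions. Equivalently, one must choose the right product coloring so that the induction hypothesis applies to each block without the number of colors escaping primitive recursive control. Were this to become unwieldy, the fallback is to invoke \cite[Theorem~2.15]{dodos2016ramsey} and observe that its proof uses only finitely many nested applications of Hales-Jewett with explicit bounds, so that it formalizes in $\RCA_0$ verbatim.
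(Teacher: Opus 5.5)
\textbf{The gap is the step $m\to m+1$.} You describe it but never carry it out, and the mechanism you describe cannot work. After the first occurrence of the last variable $x_m$, the symbol substituted for a later variable $y_t$ of $v$ ranges over $A\cup\{x_m\}$. But the line you choose in block $B_t$ must have all its fixed coordinates in $A$. Hales--Jewett over the enlarged alphabet only yields lines whose fixed coordinates may include $x_m$, and in general no line over $A$ stabilizes uniformly over raw prefixes.

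Here is a concrete obstruction. Color each word of $\OSub^{m+1,\star}_A(w)$ according to whether at least two variables of $w$ were replaced by $x_m$. Take a raw prefix in which exactly one variable has become $x_m$, followed by a continuation in which all later variables are replaced by letters. Then for every line $\lambda$ over $A$ in the next block, $\lambda(x_m)$ and $\lambda(a)$ receive different colors. So the ``later, already stabilized blocks'' cannot be produced by your backward scheme. In a good final $v$ this coloring is harmless, since each block can be chosen with several variable positions, but a construction that must be uniform over raw prefixes cannot exploit this. What is missing is a genuinely new combinatorial ingredient, not bookkeeping.

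\textbf{The case $m=0$ and the fallback.} Your case $m=0$ is fine: backward Hales--Jewett on product colorings indexed by raw prefixes, then pigeonhole on $\ell d$ levels, then thinning to $d$ levels of one color. There is one indexing slip: in $\OSub^{0,\star}_A(v)$ the last variable $y_{d-1}$ is never substituted and only marks the cut of level-$(d-1)$ words, so the last real stabilization concerns $y_{d-2}$.

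The paper gives no proof of this statement; it simply cites Dodos, Kanellopoulos and Tyros~\cite{dodos2014density}. Your fallback can close the gap, but as written it cites a different statement. By the paper's own description, \cite[Theorem~2.15]{dodos2016ramsey} is the ordered Graham--Rothschild theorem, which concerns $\OSub^{m,=}$ rather than $\OSub^{m,\star}$. You need to add the following reduction.
\begin{itemize}
\item Given $f$ on $\OSub^{m,\star}_A(w)$, color each element of $\OSub^{m+1,=}_A(w)$ by the $f$-color of its prefix before the first occurrence of $x_m$.
\item Take $v'\in\OSub^{d+1,=}_A(w)$ on whose $(m+1)$-subwords this induced coloring is constant.
\item Let $v$ be $v'$ with its last variable replaced by a letter.
\end{itemize}
For every ordered $m$-variable word $u$ with $|u|<d$, $v[u]$ is exactly the prefix before $x_m$ of $v'[u\cdot x_m^{d+1-|u|}]$. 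Hence $\OSub^{m,\star}_A(v)$ is monochromatic, and one may take $OVW(k,d,m,\ell)=\mathrm{OGR}(k,d+1,m+1,\ell)$.
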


We are actually interested in the unordered counterpart of \Cref{thm:finite-ovw} for $m = 0$ and $m = 1$.
The unordered version of \Cref{thm:finite-ovw} does not seem to appear in the literature in its full generality. Thankfully, the cases $m = 0$ and $m = 1$ are immediate consequences of \Cref{thm:finite-ovw} since $1$-variable words and ordered $1$-variable words coincide (see the paragraph after \Cref{lem:csl-large-canon}).

\subsection{Variable word theorems in reverse mathematics}

Theorems about variable words have been extensively studied in reverse mathematics, due to their connection with many other combinatorial theorems. We now review the reverse-mathematical literature around these statements and their relations to variable word theorems.

\textit{Dual Ramsey theorem and variable words.} In order to prove a dual version of Ramsey's theorem, Carlson and Simpson~\cite{carlson1984dual} proved the zero-dimensional version of the Ordered Variable Word theorem~\cite[Theorem 6.3]{carlson1984dual} and used it as a base principle to inductively prove the now-called Carlson-Simpson Lemma~\cite[Lemma 2.4]{carlson1984dual}. Miller and Solomon~\cite{miller2004effectiveness} studied both the Dual Ramsey theorem and $\OVW^0$ from the viewpoint of reverse mathematics. They showed that $\OVW^0_2(2)$ is not provable over~$\WKL_0$. Dzhafarov, Flood, Solomon and Westrick~\cite{Dzhafarov2017Effectiveness} studied thoroughly the Dual Ramsey theorem and proved that $\CSL$ holds over~$\PIOOCA_0$. Liu, Monin and Patey~\cite{liu2019computable} proved that $\OVW^0(2)$ holds over~$\ACA$, a strengthening of $\ACA_0$ with full induction. They also showed that $\OVW^0(2)$ follows from Hindman's theorem over~$\RCA_0$. Later, Anglès d'Auriac, Liu, Mignoty and Patey~\cite{angles2023carlson} proved that $\OVW^0$ holds over~$\ACA_0$ and deduced that for every~$n \in \omega$, the statement $\CSL^n$ holds over~$\ACA_0^+$, the strengthening of $\ACA_0$ with the axiom stating the existence of the $\omega$-jump of any set. Le Houérou, Patey and Yokoyama~\cite{houerou2024pi} proved that $\RCA_0 + \OVW^0$ is $\forall \Pi^0_4$-conservative over~$\RCA_0 + \BSig_2$, from which it follows that $\OVW^0$ does not imply $\ACA_0$ over~$\RCA_0$. More recently, Liu and Patey~\cite{liu2026reverse} gave a Towsner-style proof of $\OVW^0$ and showed that it implies neither $\ACA_0$, nor Ramsey's theorem for pairs over~$\RCA_0$. They also proved that for every~$n \in \omega$, the statement $\CSL^n$ holds over~$\ACA_0$.

\textit{Hindman's theorem and variable words.} Hindman's theorem ($\HT$) is notoriously equivalent to the Finite Union theorem ($\FUT$), which states, for every~$\ell \geq 1$ and for every coloring $f : [\NN]^{<\omega} \to \ell$, the existence of an infinite sequence of finite sets $\vec{F} = F_0 < F_1 < \dots$ such that $\FU(\vec{F})$ is monochromatic. Here, $\FU(\vec{F})$ denotes the set of all non-empty finite unions over~$\vec{F}$. The Finite Union theorem is closely related to the 1-dimensional version of the Ordered Variable Word theorem for unary alphabets. Indeed, $\OVW^1_\ell(1)$ can be reformulated as follows: \qt{For every coloring $f : [\NN]^{<\omega} \times \NN \to \ell$, there is an infinite sequence of finite sets $\vec{F} = F_0 < F_1 < \dots$ and a color~$i < \ell$ such that for every~$E \in \FU(\vec{F})$, and every~$s$ such that $\max E < \min F_s$, $f(E, \min F_s) = i$.} It is not difficult to see that $\FUT_\ell$ and $\OVW^1_\ell(1)$ are equivalent over~$\RCA_0$ (see \Cref{prop:fut-ovw1}). Hindman's theorem admits multiple proofs, which were systematically analyzed in reverse mathematics: First, the original proof by Hindman~\cite{Hindman1974Finite}, which was later simplified by Baumgartner~\cite{baumgartnerShortProofHindmans1974}. Both proofs were analyzed by Blass, Hirst, and Simpson~\cite{blass1987logical}, who proved that the former holds over~$\ACA_0^+$, while the latter holds over the much stronger system~$\Pi^1_2\mbox{-}\mathsf{TI}_0$. In terms of lower bounds, they also proved that $\HT_2$ implies $\ACA_0$ over~$\RCA_0$. It follows from the equivalence between $\HT_2$ and $\OVW^1_2(1)$ that $\OVW^1_2(1)$ implies $\ACA_0$ over~$\RCA_0$. Galvin and Glazer~\cite{hindmanAlgebraStoneCechCompactification2012} proved Hindman's theorem using idempotent ultrafilters. Their proof, analyzed by Towsner~\cite{towsner2011hindman} holds in $\Sigma^1_1\mbox{-}\mathsf{TI}_0$. Last, Towsner~\cite{Towsner2012simple} gave a simple combinatorial proof of Hindman's theorem, which holds over~$\ACA_0^+$.

\textit{Big Ramsey degrees and tree partition theorems.} Structural Ramsey theory studies extensions of Ramsey's theorem to infinite mathematical structures. An infinite structure $\Mk$ admits \emph{finite big Ramsey degree} if for every finite sub-structure $\Fk$, there is some number~$k_{\Fk}$ such that for every finite coloring of $[\Mk]^\Fk$ (the isomorphic copies of~$\Fk$ in~$\Mk$), there is a sub-copy~$\Nk$ of~$\Mk$ such that $[\Nk]^{\Mk}$ uses at most~$k_{\Fk}$ colors. For instance, Devlin's theorem~\cite{devlin1980some} states that $(\QQ, <)$ admits finite big Ramsey degree, while Sauer~\cite{sauer2006coloring} proved that the random graph admits finite big Ramsey degree. The existence of finite big Ramsey degrees is usually proven using tree partition theorems. For instance, the two previous theorems are reduced to Milliken's tree theorem~\cite{angles2020milliken}, which holds in~$\ACA_0$. The existence of big Ramsey degrees for the universal triangle-free Henson graph was proven by Dobrinen~\cite{dobrinen2020ramsey} using forcing with strong coded trees. Hubi\v{c}ka~\cite{hubivcka2020big} gave another proof using the Carlson-Simpson Lemma. This was used by Anglès d'Auriac, Liu, Mignoty and Patey~\cite{angles2023carlson} to show that the existence of finite big Ramsey degrees for the universal triangle-free Henson graph holds in~$\ACA_0^+$. Recently, Cholak, Dobrinen and Towsner~\cite{cholak2026hensonupper} proved that it holds over~$\ACA_0$ and Cholak, Dobrinen and McCoy~\cite{cholak2026henson} showed that it implies~$\ACA_0$ as soon as we consider colorings of finite sub-structures of size at least~2. Liu and Patey~\cite{liu2026reverse} proved that the indivisibility of the universal triangle-free Henson graph admits cone avoidance, and therefore does not imply $\ACA_0$. In this paper, we show that this indivisibility statement for 2-colorings is $\forall \Pi^0_4$-conservative over~$\RCA_0 + \BSig_2$.

\subsection{Preliminary reductions}

We conclude this section with some simple reductions, both over $\RCA_0$ and over strong Weihrauch reducibility. The goal is to show to which extent \Cref{thm:csl12-pi04-conservation} is optimal: 
\begin{itemize}
    \item It does not hold when allowing more colors, even when restricted to unary alphabets: We shall see that $\forall \ell \CSL^1_\ell(1)$ implies $\BSig_3$ over~$\RCA_0$, and therefore is not even $\Pi_1$-conservative over $\RCA_0 + \BSig_2$.
    \item It does not hold when asking for the $\omega$-variable word to be ordered, even for unary alphabets. Indeed, $\OVW^1_2(1)$ implies $\ACA_0$ over~$\ACA_0$.
\end{itemize}

\begin{definition}
A problem~$\Psf$ is \emph{strongly Weihrauch reducible} to another problem $\Qsf$ (written $\Psf \leq_{sW} \Qsf$) if there are two Turing functionals $\Phi, \Psi$ such that for every~$\Psf$-instance~$X$, $\Phi^X$ is a $\Qsf$-instance~$\hat X$ such that for every~$\Qsf$-solution~$\hat Y$ to~$\hat X$, $\Phi^{\hat Y}$ is a $\Psf$-solution to~$X$.
\end{definition}

Given a set~$X$ and some~$n \in \NN$, we write $[X]^n$ for the set of all $n$-subsets of~$X$.
Ramsey's theorem for $n$-tuples and $\ell$-colorings ($\RT^n_\ell$) is the statement \qt{For every coloring $f : [\NN]^n \to \ell$, there is an infinite set $H \subseteq \NN$ such that $[H]^n$ is $f$-monochromatic.}
The following proposition was proven by Liu and Patey~\cite[Lemma 8.5]{liu2026reverse} using multiple reductions involving intermediary principles. We give a more direct proof:

\begin{proposition}\label[proposition]{prop:rt-to-csl}
For every $n, \ell \geq 0$, $\RT^{n+1}_\ell \leq_{sW} \CSL^n_\ell(1)$.
Moreover, $\RCA_0 \vdash \forall n, \ell (\CSL^n_\ell(1) \rightarrow \RT^{n+1}_\ell)$.
The same holds when replacing $\CSL^n_\ell(1)$ with $\OVW^n_\ell(1)$.
\end{proposition}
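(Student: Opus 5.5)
Given an instance $f : [\NN]^{n+1} \to \ell$ of $\RT^{n+1}_\ell$, we build the coloring $g : \{0\}^{<\omega,n} \to \ell$ of $n$-variable words over the unary alphabet $A=\{0\}$. The key observation is that an $n$-variable word $u$ over $\{0\}$ determines the $n$ positions $p_0(u) < p_1(u) < \dots < p_{n-1}(u)$ of the first occurrences of $x_0,\dots,x_{n-1}$, together with its length $|u|$, which exceeds $p_{n-1}(u)$. We set
$$g(u) = f(\{p_0(u),\dots,p_{n-1}(u),|u|\}),$$
which is a set of size $n+1$. For $n=0$, $u$ is a word of zeros and $g(u)=f(\{|u|\})$. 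The map $f\mapsto g$ is uniformly computable, which gives the forward functional $\Phi$.

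Now let $W$ be an $\omega$-variable word over $\{0\}$ such that $\USub^{n,\star}_{\{0\}}(W)$ is monochromatic for $g$ with color $c$. Let $q_j$ be the position of the first occurrence of $x_j$ in $W$, and put $H=\{q_j : j\in\NN\}$. This set is infinite and strictly increasing, and it is computable from $W$, which gives the backward functional $\Psi$.

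Take any $q_{j_0}<\dots<q_{j_n}$ in $H$. Let $u$ be the $n$-variable word of length $j_n$ over $\{0,x_0,\dots,x_{n-1}\}$ with $u(j_i)=x_i$ for $i<n$ and $0$ elsewhere. Since the $j_i$ are increasing, $u$ is a legitimate (indeed ordered) $n$-variable word. The substituted word $W[u]$ is $W$ cut before the first occurrence of $x_{j_n}$, so its length is $q_{j_n}$.

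The first occurrence of $x_i$ in $W[u]$ comes from the first position of $W$ carrying a variable $x_k$ with $u(k)=x_i$, which is only $k=j_i$. That position is $q_{j_i}$, and it lies before the cut. Hence
$$g(W[u]) = f(\{q_{j_0},\dots,q_{j_{n-1}},q_{j_n}\}) = c,$$
so $[H]^{n+1}$ is $f$-homogeneous.

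The main point needing care is exactly this verification: that first occurrences are preserved under substitution, and that the cut makes the length match. Note also that the argument only ever uses substitutions by ordered words $u$. The same reduction therefore works verbatim for $\OVW^n_\ell(1)$, because $W[u]$ is then an ordered word and $g$ can be restricted to ordered words.

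Every step is uniformly computable and uses only $\Delta^0_1$-comprehension, so it formalizes in $\RCA_0$ uniformly in $n$ and $\ell$. This yields $\RCA_0\vdash\forall n,\ell\,(\CSL^n_\ell(1)\to\RT^{n+1}_\ell)$.
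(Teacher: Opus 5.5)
Your proposal is correct and follows essentially the same route as the paper. Both use the coloring $g(u)=f(\{p_0(u),\dots,p_{n-1}(u),|u|\})$ on $n$-variable words over a unary alphabet, take $H$ to be the set of first-occurrence positions of the variables of $W$, and witness homogeneity with the substituted word $u$ of length $j_n$ with $u(j_i)=x_i$. Your explicit verification of first occurrences and of the cut length $q_{j_n}$, and your remark that only ordered $u$ are used (which justifies the $\OVW^n_\ell(1)$ case), spell out details the paper states only briefly.
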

\begin{proof}
Fix~$n, \ell \geq 0$.
Let $f : [\NN]^{n+1} \to \ell$ be an instance of $\RT^{n+1}_\ell$. Let $A = \{a\}$ and define $g : A^{<\omega, n} \to \ell$ by letting $g(w) = f(i_0, \dots, i_{n-1}, |w|)$, 
where $i_s$ is position of the first occurrence of the variable~$x_s$ in~$w$. Let $W$ be an $\omega$-variable word over~$A$ such that $\USub^{n,\star}_A(W)$ is $g$-monochromatic for some color~$c < \ell$. For clarity, say that the variable kinds of~$W$ range over $\{y_0, y_1, \dots \}$. Let $H = \{h_0 < h_1 < \dots \}$ be such that $h_s$ is the position of  the first occurrence of~$y_s$ in~$W$. 

We claim that $[H]^{n+1}$ is $f$-monochromatic for color~$c$.
Let $\{h_{i_0} < \dots < h_{i_n} \} \in [H]^{n+1}$. Let $v$ be the variable word of length~$i_n$ defined by $v(s) = x_j$ if $s = i_j$, and $v(s) = a$ otherwise. Let $w = W[v]$. Note that $|w| = h_{i_n}$, and that the first occurrence of the variable~$x_j$ in~$w$ is at position $h_{i_j}$. By definition of~$g$, $g(w) = f(h_{i_0}, \dots, h_{i_n}) = c$.

The proof holds over~$\RCA_0$, and is exactly the same for ordered variable words. This completes our proof of \Cref{prop:rt-to-csl}.
\end{proof}

\begin{corollary}\label[corollary]{prop:csl1-bsig3}
$\RCA_0 \vdash \forall \ell \CSL^1_\ell(1) \rightarrow \BSig_3$. In particular, $\forall \ell \CSL^1_\ell(1)$ is not $\Pi_1$-conservative over~$\RCA_0 + \BSig_2$.
\end{corollary}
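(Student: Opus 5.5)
This corollary follows from \Cref{prop:rt-to-csl} with $n = 1$ together with the known fact that $\RCA_0 \vdash \forall \ell\, \RT^2_\ell \rightarrow \BSig_3$. Indeed, \Cref{prop:rt-to-csl} gives, provably in $\RCA_0$, that $\forall \ell\, (\CSL^1_\ell(1) \rightarrow \RT^2_\ell)$. Hence $\forall \ell\, \CSL^1_\ell(1)$ implies $\forall \ell\, \RT^2_\ell$ over $\RCA_0$. It then suffices to recall that Ramsey's theorem for pairs with an arbitrary (nonstandard) number of colors implies $\BSig_3$.

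For completeness, I would sketch this last implication directly, since it is short. Over $\RCA_0$, $\BSig_3$ is equivalent to the following statement: for every $\ell$ and every $\Delta^0_3$ partition of $\NN$ into $\ell$ parts, some part is infinite. I would use the equivalent formulation via limits, namely that every function $h : \NN \to \ell$ which is $\Delta^0_2$ relative to some set has a value taken infinitely often.

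The key step is to turn such an $h$ into a computable (relative to the parameter) coloring of pairs. Write $h(x) = \lim_s g(x, s)$ for some $g : \NN^2 \to \ell$, and define $f(x, y) = g(x, y)$ for $x < y$. Applying $\RT^2_\ell$ yields an infinite homogeneous set $H$ with color $c$. For each $x \in H$, $g(x, y) = c$ for infinitely many $y$, so $h(x) = c$ for all $x \in H$. Thus $c$ is taken infinitely often.

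The main obstacle is the precise formulation: reducing $\BSig_3$ to the statement about $\Delta^0_2$-partitions into $\ell$ parts requires some care. Since this reduction is standard (Hirst's theorem that $\RT^2_{<\infty}$ implies $\BSig_3$), I would cite it rather than reprove it.

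For the second part, suppose $\forall \ell\, \CSL^1_\ell(1)$ were $\Pi_1$-conservative over $\RCA_0 + \BSig_2$. Then $\BSig_3$ would be provable in $\RCA_0 + \BSig_2$, because by the first part it is implied by $\forall \ell\, \CSL^1_\ell(1)$. Although $\BSig_3$ is not itself $\Pi_1$, it proves the $\Pi_1$ sentence $\mathrm{Con}(\RCA_0 + \BSig_2)$, or more simply $\mathrm{Con}(\mathsf{I}\Sigma_1)$. The reason is that $\BSig_3$ implies $\ISig_2$, and $\ISig_2$ proves the consistency of $\mathsf{I}\Sigma_1$, which is $\Pi^0_2$-conservative-equivalent to $\RCA_0 + \BSig_2$ for arithmetical purposes. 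This $\Pi_1$ consistency statement is not provable in $\RCA_0 + \BSig_2$ by Gödel's second incompleteness theorem, which yields the contradiction.
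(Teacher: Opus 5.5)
Your proof is correct and follows the paper's route. You combine \Cref{prop:rt-to-csl} at $n=1$ with Hirst's theorem $\RCA_0 \vdash \forall \ell\, \RT^2_\ell \to \BSig_3$, then use that $\BSig_3$ proves the $\Pi_1$ sentence $\mathsf{Con}(\ISig_1)$, which $\RCA_0 + \BSig_2$ does not prove. Two slips to fix: first, $\BSig_3$ corresponds to the pigeonhole principle for $\Delta^0_2$ partitions, not $\Delta^0_3$ ones; second, $\Pi_1$-conservativity does not make $\BSig_3$ itself provable in $\RCA_0 + \BSig_2$, only its $\Pi_1$ consequence $\mathsf{Con}(\ISig_1)$, which is what your following sentence correctly uses.
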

\begin{proof}
Hirst~\cite[Theorem 6.11]{hirst1987combinatorics} proved that $\RCA_0 \vdash \forall \ell \RT^2_\ell \to \BSig_3$.
It follows by \Cref{prop:rt-to-csl} that $\RCA_0 \vdash \forall \ell \CSL^1_\ell(1) \rightarrow \BSig_3$. Finally, $\RCA_0 + \BSig_3$ is not $\Pi_1$-conservative over~$\RCA_0 + \BSig_2$, as $\RCA_0 \vdash \BSig_3 \to \mathsf{Con}(\ISig_1)$ but $\RCA_0 + \BSig_2 \not \vdash \mathsf{Con}(\ISig_1)$, and $\mathsf{Con}(\ISig_1)$ is a $\Pi_1$-sentence.
\end{proof}

The next proposition clarifies the relationship between the ordered variable word theorem for unary alphabets, and the finite union theorem.

\begin{proposition}\label[proposition]{prop:fut-ovw1}
$\RCA_0 \vdash \forall \ell (\OVW^1_\ell(1) \leftrightarrow \FUT_\ell)$.
\end{proposition}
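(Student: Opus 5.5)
Both directions go through an explicit dictionary between ordered 1-variable words over the unary alphabet $A=\{a\}$ and pairs $(E,t)$ with $E\in[\NN]^{<\omega}$ nonempty and $t\in\NN$. An ordered 1-variable word $w$ over $A$ is a word over $\{a,x_0\}$ in which $x_0$ occurs at least once. We send $w$ to $(E_w,|w|)$, where $E_w=\{i<|w| : w(i)=x_0\}$ is the set of positions of the variable. This is a computable bijection between $A^{<\omega,1}_<$ and $\{(E,t): \emptyset\neq E\subseteq t\}$. Likewise, an ordered $\omega$-variable word $W$ over $A$ with variables $y_0,y_1,\dots$ corresponds to the sequence $\vec F=F_0<F_1<\cdots$, where $F_s$ is the (finite) set of positions of $y_s$; the ordering guarantees $\max F_s<\min F_{s+1}$.

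Substitution is then transparent. For $u\in A^{<\omega,1}_<$ of length $n$ with variable positions $G\subseteq n$, the word $W[u]$ is cut at position $\min F_n$, and its variable positions are $\bigcup_{j\in G}F_j$. Hence $\OSub^{1,\star}_A(W)$ corresponds exactly to the pairs $(E,\min F_s)$ with $E\in\FU(\vec F)$ and $s$ larger than every index used in $E$, i.e.\ $\max E<\min F_s$. This is the reformulation quoted in the introduction, and everything below is $\Delta^0_1$, so it is available in $\RCA_0$.

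For $\OVW^1_\ell(1)\to\FUT_\ell$, given $f:[\NN]^{<\omega}\to\ell$, we define $g(w)=f(E_w)$ and simply ignore the length. We apply $\OVW^1_\ell(1)$ to obtain $W$ and the corresponding $\vec F$. Every $E\in\FU(\vec F)$ is a finite union over finitely many indices, so we may pick $s$ beyond all of them. Then $(E,\min F_s)$ is realized by some $W[u]$, and therefore $f(E)=c$.

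For $\FUT_\ell\to\OVW^1_\ell(1)$, the length coordinate has to be absorbed, and this is the main obstacle. The plan is to encode the pair into a single finite set: given $g$, viewed as a coloring $g(E,t)$, define $f(H)=g(H\setminus\{\max H\},\max H)$ for $|H|\ge 2$, and set $f$ arbitrarily otherwise. Apply $\FUT_\ell$ to obtain $\vec H=H_0<H_1<\cdots$ with $\FU(\vec H)$ monochromatic of color $c$. We then build the blocks by splitting off maxima, $F_s = H_s\setminus\{\max H_s\}$, after first thinning to blocks of size at least $2$. The thinning is done by merging consecutive pairs, $H'_s=H_{2s}\cup H_{2s+1}$, which keeps $\FU(\vec H')\subseteq\FU(\vec H)$.

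This splitting does not immediately fit the required shape, because we need the color of pairs $(E,t)$ with $t=\min F_s$. The fix is to use the next block instead. Take $F_s=H'_s$ and pick the length point from a later block: for $E=\bigcup_{j\in G}F_j$ and $s>\max G$, consider $E\cup\{\min F_s\}$. But $\{\min F_s\}$ is not itself a union of blocks. So I would instead choose $\vec F$ interleaved with singletons. Apply $\FUT$ and then define $F_s=H_{2s}$ with length points $p_s=\max H_{2s+1}$. This requires the pair to be read as $(E,t)$ with $t$ being the $\max$ of a separate block, whereas $\OVW$ requires $t=\min F_s$.

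To reconcile the two, shift to $f(H)=g(H\setminus K,\ \max H)$ where $K$ is the last block of $H$. Since the block structure is not visible from $H$ alone, I expect to proceed differently: define $f(H)=g(H\setminus\{\max H\},\max H+1)$ and take $F_s=H'_s\setminus\{\max H'_s\}\cup\{\ldots\}$, adjusting so that $\min F_{s+1}=\max H'_s+1$. This adjustment is achieved by padding $F_{s+1}$ with its new minimum $\max H'_s+1$ and keeping monochromaticity via the correspondence $E\cup\{\max\}\leftrightarrow(E,\max+1)$. Working out this bookkeeping correctly, possibly with an auxiliary computable recoding of $\NN$ that makes the minimum of the next block be determined by the maximum of the previous one, is where I expect most of the care to be needed. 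The coloring $f$ is defined so that FU-homogeneity of the $\vec H$ transfers to $\OSub$-homogeneity of $W$.
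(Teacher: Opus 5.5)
Your direction $\OVW^1_\ell(1)\to\FUT_\ell$ is correct and is the paper's argument: color a word by its set of variable positions, and read off $F_s$ as the positions of $x_s$.

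The converse is not proved. Your last three paragraphs are a sequence of encodings that you yourself note do not fit, and they end with an unspecified bookkeeping step. The obstruction is structural. $\FU$-homogeneity only constrains unions of whole blocks, and a coloring of finite sets cannot see how such a union splits into blocks. So you cannot peel off a single point $\min F_n$ to serve as the length.

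Your final attempt shows this concretely. Take $f(H)=g(H\setminus\{\max H\},\max H+1)$ and $F_s=(H'_s\setminus\{\max H'_s\})\cup\{\max H'_{s-1}+1\}$, and let $E=\bigcup_{j\in G}F_j$. The set whose color you need is $E\cup\{\max H'_{n-1}\}$. It contains the padding points $\max H'_{j-1}+1$ and omits $\max H'_j$ for $j\in G\setminus\{n-1\}$, so it is not in $\FU(\vec H')$ and homogeneity does not transfer.

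More generally, the other encodings fail too. Any scheme where the remainder of the last block leaks into the variable set already fails for $E=F_0$: $F_0$ would have to contain that remainder for every length $n$. Nor can you force $\FUT$ to return singleton blocks; coloring by $|H| \bmod 2$ forces every block to have even size.

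The missing idea, which the paper uses, is to eliminate the length coordinate by a limit argument rather than an encoding.
\begin{enumerate}
\item Since $\RCA_0\vdash\FUT_\ell\to\ACA_0$ (Blass, Hirst and Simpson), $\COH$ is available.
\item Take $C$ cohesive for the sets $R_{E,c}=\{t>\max E : g(E,t)=c\}$. Then for $t\in C$ the color $g(E,t)$ stabilizes to a value $\bar g(E)$ depending only on $E$.
\item Apply $\FUT_\ell$ to $\bar g$ on finite subsets of $C$, obtaining blocks $F_0<F_1<\cdots$ inside $C$ with $\FU(\vec F)$ $\bar g$-homogeneous of some color $c$.
\item Greedily choose a subsequence of these blocks, computably in $W\oplus g$. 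Each new block must have its minimum in the finitely many sets $R_{E,c}$, for $E$ a union of previously chosen blocks; each such set is cofinite in $C$, so a suitable block exists.
\end{enumerate}
The last step turns limit-homogeneity into genuine homogeneity of $\OSub^{1,\star}_A(V)$ for the resulting ordered $\omega$-variable word $V$.
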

\begin{proof}
Fix~$\ell \geq 1$. Given a (finite or infinite) variable word~$w$ and some variable kind~$x_i$, we let $\occ(w, x_i)$ be the set of positions of all the occurrences of~$x_i$ in~$w$. Whenever~$w$ is a 1-variable word, we simply write $\occ(w)$.
\begin{itemize}
    \item $\OVW^1_\ell(1) \rightarrow \FUT_\ell$. Let $f : [\NN]^{<\NN} \to \ell$ be a coloring. Define the coloring $g : \{0\}^{<\omega,1} \to \ell$ as follows: $g(w) = f(\occ(w))$. Let~$W$ be an ordered $\omega$-variable word over~$\{0\}$ such that $\OSub^{1,\star}_{\{0\}}(W)$ is $g$-monochromatic for some color~$c < \ell$. For every~$s \in \NN$, let $F_s = \occ(W, x_s)$. Note that for every~$s \in \NN$, $\max F_s < \min F_{s+1}$ as $W$ is ordered. We claim that for every $H \in \FU(F_0, F_1, \dots )$, $f(H) = c$. Indeed, for every such~$H$, there is a 1-variable word~$w \in \OSub^{1,\star}_{\{0\}}(W)$ such that~$H = \occ(w)$. Then $f(H) = g(w) = c$.
    \item $\FUT_\ell \rightarrow \OVW^1_\ell(1)$. By Blass, Hirst, and Simpson~\cite{blass1987logical}, $\RCA_0 \vdash \FUT_\ell \to \ACA_0$. 
    Let $A = \{a\}$ be a unary alphabet, and $f : A^{<\omega, 1} \to \ell$ be a coloring. For every~$v \in A^{<\omega, 1}$ and $c < \ell$, let $R_{v,c}$ be the set of all~$n \in \NN$ such that $f(v \cdot a^{n-|v|}) = c$. Here, $a^x$ denotes the word of length~$x$ composed only of occurrences of the letter~$a$. 
    By $\COH$, which holds over~$\ACA_0$, there is an infinite $\vec{R}$-cohesive set~$C = \{ p_0 < p_1 < \dots \} \subseteq \NN$, that is, an infinite set $C \subseteq \NN$ such that for every $v \in A^{<\omega, 1}$ and $c < \ell$, either $C \subseteq^* R_{v,c}$, or $C \subseteq^* \overline{R}_{v,c}$. Here, $\subseteq^*$ means \qt{included up to finite changes}. Note that for every~$u_0, u_1 \in A^{<\omega,1}$ such that $\occ(u_0) = \occ(u_1)$, $u_0$ and $u_1$ are comparable, and for every~$c < \ell$, $R_{u_0,c} =^* R_{u_1,c}$, where $=^*$ means \qt{equal up to finite changes}. 
    Moreover, for every~$u \in A^{<\omega, 1}$, there is a unique $c < \ell$ such that $C \subseteq^* R_{u,c}$. It follows that for every finite~$H \subseteq C$, there is a unique color~$c < \ell$ such that for every 1-variable word~$u$ satisfying $\occ(u) = H$, $C \subseteq^* R_{u,c}$. Let $g : \P_{fin}(C) \to \ell$ be defined by $g(H) = c$ for this unique color~$c$. By $\FUT_\ell$, there is an infinite  sequence $F_0 < F_1 < \dots$ of finite subsets of~$C$ and some color~$c < \ell$ such that for every~$H \in \FU(F_0, F_1, \dots)$, $g(H) = c$. Let $W$ be the $\omega$-variable word defined for every~$s \in \NN$ by $W(s) = x_i$ if $s \in F_i$, and $W(s) = a$ otherwise. Then, for every~$u \in \OSub^{1,\star}_A(W)$, $\occ(u) \in \FU(F_0, F_1, \dots)$, hence $g(\occ(u)) = c$. By definition of~$g$, $C \subseteq^* R_{u, c}$, so $\lim_{n \in C} f(u \cdot a^{n-|u|}) = c$. By a greedy algorithm, one can $W \oplus f$-compute an $\omega$-variable word~$V \in \OSub_A(W)$ such that for every~$u \in \OSub^{1,\star}_A(V)$, $f(u) = c$.
\end{itemize}
\end{proof}

\begin{corollary}\label[corollary]{cor:ovw121-aca}
$\RCA_0 \vdash \OVW^1_2(1) \to \ACA_0$. In particular, $\OVW^1_2(1)$ is not $\Pi_1$-conservative over~$\RCA_0 + \BSig_2$.
\end{corollary}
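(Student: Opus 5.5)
The corollary should follow at once from \Cref{prop:fut-ovw1} combined with the result of Blass, Hirst and Simpson~\cite{blass1987logical} that $\RCA_0 \vdash \HT_2 \to \ACA_0$. First I will fix $\ell = 2$ in \Cref{prop:fut-ovw1} and use the left-to-right direction, $\OVW^1_2(1) \to \FUT_2$. This direction holds in $\RCA_0$ by its own proof: there we only build $g(w) = f(\occ(w))$, read off $F_s = \occ(W, x_s)$, and check that $\FU(F_0, F_1, \dots)$ is monochromatic, which needs only $\Delta^0_1$-comprehension. The appeal to $\ACA_0$ in that proof is used only in the other direction. Next I will invoke the standard equivalence $\RCA_0 \vdash \FUT_2 \leftrightarrow \HT_2$, together with the Blass--Hirst--Simpson lower bound, to get $\RCA_0 \vdash \OVW^1_2(1) \to \ACA_0$.

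The \qt{in particular} clause then reduces to the fact that $\ACA_0$ is not $\Pi_1$-conservative over $\RCA_0 + \BSig_2$. The reason is that $\ACA_0$ proves every instance of arithmetical induction for sets, so it proves $\ISig_1$. More precisely, $\ACA_0$ proves $\mathsf{Con}(\ISig_1)$: it can form the truth set for $\Sigma^0_1$-formulas from the jump, and show by $\Sigma^0_1$-induction relative to that set (equivalently, $\Delta^0_1$ in the jump) that every theorem of $\ISig_1$ is true. Meanwhile $\RCA_0 + \BSig_2$ is $\Pi_1$-conservative over $\ISig_1$, hence cannot prove $\mathsf{Con}(\ISig_1)$ by Gödel's second incompleteness theorem. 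As in the proof of \Cref{prop:csl1-bsig3}, this gives a $\Pi_1$-sentence provable in $\RCA_0 + \OVW^1_2(1)$ but not in $\RCA_0 + \BSig_2$.

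The main point to check is that the direction of \Cref{prop:fut-ovw1} I use really is provable in $\RCA_0$ alone, so that the argument is not circular. I will also confirm that $\ACA_0 \vdash \mathsf{Con}(\ISig_1)$. A simpler route is to note that $\ACA_0 \vdash \BSig_3$, since $\ACA_0$ proves every instance of arithmetical induction, and then reuse the argument of \Cref{prop:csl1-bsig3} verbatim.
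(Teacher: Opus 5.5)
Your proposal is correct and is essentially the paper's proof. It uses the $\RCA_0$-provable direction $\OVW^1_2(1) \to \FUT_2$ of \Cref{prop:fut-ovw1}, then Blass--Hirst--Simpson (your detour through $\HT_2$ is harmless, as $\FUT_2 \to \HT_2$ is immediate via binary expansions), then $\ACA_0 \vdash \mathsf{Con}(\ISig_1)$ against $\RCA_0 + \BSig_2 \not\vdash \mathsf{Con}(\ISig_1)$.

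One caveat concerns your sketch of $\ACA_0 \vdash \mathsf{Con}(\ISig_1)$ from a $\Sigma^0_1$ truth set, which does not work as stated. Theorems of $\ISig_1$ and the formulas in their proofs have unbounded complexity, so you would need partial cut elimination plus a truth set for some higher fixed level. Your fallback is sound, however: use $\ACA_0 \vdash \BSig_3$ (or simply $\ACA_0 \vdash \ISig_2$ and $\ISig_2 \vdash \mathsf{Con}(\ISig_1)$) and feed it into the argument of \Cref{prop:csl1-bsig3}.
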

\begin{proof}
By Blass, Hirst, and Simpson~\cite{blass1987logical}, $\RCA_0 \vdash \FUT_2 \to \ACA_0$, so we conclude by \Cref{prop:fut-ovw1}. Moreover, $\ACA_0 \vdash \mathsf{Con}(\ISig_1)$.
\end{proof}

Note the difference between $\OVW^1_2$ and $\CSL^1_2$, as the former implies $\ACA_0$ over~$\RCA_0$ by \Cref{cor:ovw121-aca}, while the latter is $\forall \Pi^0_4$-conservative over~$\RCA_0 + \BSig_2$ by \Cref{thm:csl12-pi04-conservation}.

\section{Largeness below $\bbomega^\bbomega$}\label[section]{sec:parameterized-largeness}

Ketonen and Solovay~\cite{ketonen1981rapidly} defined a notion of $\alpha$-largeness for $\alpha < \varepsilon_0$ quantifying the size of finite sets, to generalize the celebrated Paris-Harrington theorem. This notion of largeness was later used as a refinement of the Kirby and Paris~\cite{kirby1977initial} indicator method to produce $\forall\Pi^0_3$-conservation theorems over various subsystems of arithmetic~\cite{patey2018proof,kolo2020some,kolodziejczyk2023ramsey,towsner2024erdos}. More recently, the authors and Yokoyama defined a parameterized version of largeness to prove partial conservation theorems over $\RCA_0 + \BSig_2$ for $\forall \Pi^0_4$-sentences~\cite{houerou2023conservation,houerou2026conservationramsey}.

We now present this notion of largeness, parameterized by a fixed $\Delta^0_0$-formula $\theta(x, y, z)$.

\begin{definition}
Two finite sets~$X < Y$ are \emph{$\theta$-apart} if 
$$\forall x < \max X \exists y < \min Y \forall z < \max Y \theta(x, y, z)$$
\end{definition}

Note that $\theta$-apartness is a transitive relation. Moreover, if $X < Y$ are $\theta$-apart and $X_0 \subseteq X$ and $Y_0 \subseteq Y$, then $X_0, Y_0$ are $\theta$-apart. Contrary to Ketonen and Solovay's notion of largeness, we only consider ordinals~$\alpha$ of the form $\bbomega^n \cdot k$, with $n, k \in \NN$.

\begin{definition}[\cite{houerou2023conservation}]\label[definition]{defi:largeness-rca0-bsig2-t}
A set~$X \finsub \NN$ is 
\begin{itemize}
    \item \emph{$\bbomega^0$-large$(\theta)$} if $X \neq \emptyset$.
    \item \emph{$\bbomega^{(n+1)}$-large$(\theta)$} if $X \setminus \min X$ is  $(\bbomega^n \cdot \min X)$-large$(\theta)$
    \item \emph{$\bbomega^n \cdot k$-large$(\theta)$} if
there are $k$ pairwise $\theta$-apart $\bbomega^n$-large$(\theta)$ subsets of~$X$
$$
X_0 < X_1 < \dots < X_{k-1}
$$
\end{itemize}
\end{definition}

\begin{proposition}[\cite{houerou2023conservation}]\label[proposition]{prop:largeness-bsig2-largeness}
For every~$n \in \omega$,
$\RCA_0 + \BSig_2 + \forall x \exists y \forall z \theta(x, y, z)$ proves that for every~$b \geq 1$, $\omega^n \cdot b$-largeness$(\theta)$ is a largeness notion.
\end{proposition}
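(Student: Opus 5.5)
The plan is to read ``largeness notion'' as the usual two requirements: the property of finite sets is closed under supersets, and every infinite set $X \subseteq \NN$ has a finite subset with the property. I will prove both requirements for $\bbomega^n \cdot b$-largeness$(\theta)$ by external (meta-theoretic) induction on the standard $n \in \omega$. Inside each case I will use internal $\Sigma^0_1$-induction on~$b$.

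First, a preliminary observation. For a coded finite set $F$, the predicate ``$F$ is $\bbomega^n\cdot b$-large$(\theta)$'' is $\Delta^0_0$ in the code of~$F$ and in~$b$. The reason is that, for fixed standard~$n$, the definition only quantifies over finite subsets of~$F$, over elements below $\max F$, and over the bounded variables $y<\min Y$ and $z<\max Y$ in $\theta$-apartness. Consequently, for an infinite set~$X$, the statement ``there is a finite $F \subseteq X$ that is $\bbomega^n\cdot b$-large$(\theta)$'' is $\Sigma^0_1(X)$, so $\RCA_0$ can carry out induction on~$b$ for it.

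Upward closure comes first, by induction on~$n$.
\begin{itemize}
\item For $\bbomega^n\cdot k$, any witnesses $X_0<\dots<X_{k-1}$ inside~$F$ remain witnesses inside any $Z \supseteq F$.
\item Dropping witnesses shows that $\bbomega^n\cdot k$-largeness implies $\bbomega^n\cdot k'$-largeness for all $k' \le k$.
\item For $\bbomega^{n+1}$, let $Z \supseteq F$. Then $\min Z \le \min F$, and $Z \setminus \min Z \supseteq F\setminus\min F$. The set $F \setminus \min F$ is $\bbomega^n\cdot\min F$-large$(\theta)$, hence it is $\bbomega^n \cdot \min Z$-large$(\theta)$ by the previous item. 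The inductive hypothesis for~$n$ then lifts this to $Z\setminus\min Z$.
\end{itemize}

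The key lemma, and the only place where $\BSig_2$ and the hypothesis $\forall x \exists y \forall z\,\theta$ enter, concerns separation. Given a finite set~$F$, I claim there is a bound~$B$ such that every finite $Y$ with $\min Y \ge B$ is $\theta$-apart from~$F$. To see this, apply $\BSig_2$ to the $\Pi^0_1$-formula $\forall z\,\theta(x,y,z)$ over the bounded range $x<\max F$. This yields $B$ with $\forall x<\max F\ \exists y<B\ \forall z\ \theta(x,y,z)$. Since $z$ is universally quantified here, this holds in particular for all $z<\max Y$, which is exactly $\theta$-apartness once $\min Y \ge B$. Moreover, every subset of $F$ is then also apart from~$Y$.

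Existence of large subsets is proved by induction on~$n$, with $b=1$ first and then general~$b$.
\begin{itemize}
\item For $n=0$ and $b=1$, any singleton $\{x\} \subseteq X$ works.
\item For $n+1$ and $b=1$, pick $x \in X$ with $x \ge 1$. Apply the inductive hypothesis for $n$, with $b=x$, to the infinite set $X \cap (x,\infty)$ to obtain an $\bbomega^n\cdot x$-large$(\theta)$ subset~$G$. Then $\{x\}\cup G$ is $\bbomega^{n+1}$-large$(\theta)$.
\item For general $b$, use $\Sigma^0_1$-induction on~$b$. Suppose $F \subseteq X$ is $\bbomega^m\cdot b$-large$(\theta)$ with witnesses $X_0<\dots<X_{b-1}$. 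Take $B$ from the separation lemma and apply the case $b=1$ to the infinite set $X\cap[B,\infty)$, obtaining an $\bbomega^m$-large$(\theta)$ set~$Y$. Then $X_0,\dots,X_{b-1},Y$ are pairwise $\theta$-apart, so $F\cup Y$ is $\bbomega^m\cdot(b+1)$-large$(\theta)$.
\end{itemize}

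I expect the separation step to be the main obstacle. Iterating the choice of a fresh apart block a nonstandard number $b$ of times is precisely where $\ISig_1$ alone would fail. The argument goes through because the $\Pi^0_1$ witnesses for all $x<\max F$ are collected under a single bound by $\BSig_2$, and because largeness is $\Delta^0_0$, which keeps the induction on~$b$ at the $\Sigma^0_1$ level.
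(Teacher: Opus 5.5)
Your proof is correct. The paper gives no argument of its own for this proposition; it cites \cite{houerou2023conservation}. Your route is the standard proof of this fact: external induction on $n$, $\Sigma^0_1$-induction on $b$ with the finite set $F$ as existential witness, and $\BSig_2$ used as $\Pi^0_1$-collection to get a single bound $B$ on the apartness witnesses for all $x<\max F$. Two small points to tidy: for each standard $n$ the largeness predicate is primitive recursive, hence provably $\Delta^0_1$ in $\RCA_0$, which is all your induction needs (calling it literally $\Delta^0_0$ is more than you need); and you should take $B>\max F$ so that $F<Y$.
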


A set~$X$ is \emph{$\RT^n_k$-$\bbomega^n$-large$(\theta)$} if for every coloring $f : [X]^n \to k$, there is an $\bbomega^n$-large$(\theta)$ subset~$H \subseteq X$ such that $[H]^n$ is $f$-monochromatic. Le Houérou, Patey and Yokoyama~\cite{houerou2023conservation,houerou2026corrigendum} proved the following closure property:

\begin{theorem}[\cite{houerou2023conservation,houerou2026corrigendum}, $\RCA_0$]\label[theorem]{thm:rt22-bound}
There exists a primitive recursive function~$f_{\RT^2_2} : \NN \to \NN$ such that for every~$n \in \NN$, every $\bbomega^{f_{\RT^2_2}(n)}$-large$(\theta)$ set is $\RT^2_2$-$\bbomega^n$-large$(\theta)$.
\end{theorem}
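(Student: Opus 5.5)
We prove \Cref{thm:rt22-bound} by the standard reduction of $\RT^2_2$ to $\RT^1$ via a finite Erd\H{o}s--Rado prehomogeneous construction, carried out inside the parameterized largeness framework of \Cref{defi:largeness-rca0-bsig2-t}.

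The key tool is a pigeonhole closure: for every $n,k$ there is a primitive recursive $g(n,k)$ such that every $\bbomega^{g(n,k)}$-large$(\theta)$ set is $\RT^1_k$-$\bbomega^n$-large$(\theta)$. We take $g(n,k)=n\cdot k$, or something of that order, and prove it by induction on $n$, unfolding the definition. For $X$ large enough, $X\setminus\min X$ contains $\min X$ pairwise $\theta$-apart blocks of the appropriate size. For a coloring $f$, each block yields by induction a monochromatic subset large for the previous level. By the finite pigeonhole principle (in $\RCA_0$), enough blocks share a color. Collecting $\min H$ of them gives an $\bbomega^{n}$-large$(\theta)$ monochromatic set, since apartness is inherited by subsets.

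The construction proceeds in three steps.

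\begin{enumerate}
\item Given $f:[X]^2\to 2$ with $X$ being $\bbomega^{N}$-large$(\theta)$, we build a prehomogeneous sequence $x_0<x_1<\dots$ in $X$. At stage $i$ we have a reservoir $X_i$, take $x_i=\min X_i$, and shrink $X_{i+1}\subseteq X_i\setminus\{x_i\}$ to the points $y$ with $f(x_i,y)=c_i$ for the majority color $c_i$.
\item The real bookkeeping is needed because of the parameter $\theta$. Each $x_i$ must be $\theta$-apart from later structure. To ensure this, we choose the reservoir at each stage as a union of pairwise $\theta$-apart large blocks, and apply the $\RT^1_2$ closure blockwise rather than to the whole set. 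This is exactly where the exponent drops: one application of $\RT^1_2$ costs a bounded decrease of the exponent, and we must perform roughly $\bbomega^{n+1}$-many of them, measured in largeness.
\item The sequence $\{x_i\}$ so obtained is $\bbomega^{n+1}$-large$(\theta)$, or more precisely large enough. We apply the pigeonhole closure to $i\mapsto c_i$ to get an $\bbomega^{n}$-large$(\theta)$ $f$-homogeneous set, yielding $f_{\RT^2_2}(n)$ of the form $O(n)$ iterated through $g$, hence primitive recursive.
\end{enumerate}

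The main obstacle is step 2. We must show that the prehomogeneous tree, read along a largeness-preserving path, stays $\bbomega^{m}$-large$(\theta)$ for an $m$ dropping only by a controlled amount per level. This requires a lemma of the form: if $X$ is $\bbomega^{m+c}$-large$(\theta)$ and $X=Y_0\sqcup Y_1$, then some $Y_j$ is $\bbomega^{m}$-large$(\theta)$, applied uniformly with apartness preserved between $\min X$ and the remainder. We would prove this via the partition (pigeonhole) lemma above together with transitivity of $\theta$-apartness, and we expect the subtle point to be the case $\min X$ small, where the block count $\min X$ gives little room. We would handle that case by discarding an initial segment, which costs one level of $\bbomega$.

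The whole argument only uses bounded finite combinatorics on codes of finite sets, so it formalizes in $\RCA_0$ as claimed.
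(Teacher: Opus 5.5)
There is a genuine gap, and it sits exactly in your step~2. (The paper gives no proof of this statement; it imports it from \cite{houerou2023conservation,houerou2026corrigendum}.)

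\textbf{Why the Erd\H{o}s--Rado route fails.} An element-by-element prehomogeneous construction cannot produce a bound of the form $\bbomega^{f(n)}$ with $f(n)\in\NN$.

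Start with your own accounting. Each new $x_i$ forces a restriction of the reservoir to a colour class of $y\mapsto f(x_i,y)$, at a fixed exponent cost. So the total cost is at least the number of elements of the prehomogeneous sequence. Any $\bbomega$-large$(\theta)$ set, and a fortiori any $\bbomega^{n+1}$-large$(\theta)$ one, has more elements than its minimum. Hence the starting exponent must exceed $\min X$, which is unbounded for fixed $n$.

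Applying $\RT^1_2$ blockwise does not help. The map $f(x_i,\cdot)$ must be constant on the whole surviving reservoir, so every surviving block pays for every $x_i$.

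A finer accounting, halving the coefficient of $\bbomega^m$ instead of dropping an exponent, does not rescue it either:
\begin{itemize}
\item An $\bbomega^m$-large$(\theta)$ reservoir with minimum $z$ affords only about $\log_2 z$ halvings before the next exponent is consumed. That gives about $\log_2 z$ new elements, all above $z$.
\item Every $\bbomega$-piece of the sequence needs more elements than its own minimum. So a piece cannot start and finish within one unit of exponent, and each unit completes at most one piece.
\item An $\bbomega^2$-large$(\theta)$ prehomogeneous sequence $H$ needs $\min H\geq\min X$ such pieces, so it already costs about $\min X$ units.
\end{itemize}
In ordinal terms, Erd\H{o}s--Rado extracts a $\beta$-large prehomogeneous sequence only from a roughly $2^\beta$-large set. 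Since $2^{\bbomega^2}=\bbomega^{\bbomega}$, this is neither of the required form nor something $\RCA_0+\BSig_2$ can guarantee.

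Two further problems:
\begin{itemize}
\item Nothing in your sketch controls where the $x_i$ land, since the colouring can push the surviving class far to the right. Nor does it make groups of them $\theta$-apart.
\item Your pigeonhole lemma miscounts. You have $\min X$ blocks but need $\min H>\min X$ of one colour. This one is repairable by the usual induction on $\bbomega^n\cdot k$.
\end{itemize}

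\textbf{The missing idea.} What makes every bound in this paper work is paying for each shrinking once per block rather than once per element.
\begin{itemize}
\item Work with pairwise $\theta$-apart blocks inside an $(x\mapsto x^{x^x})$-sparse set (\Cref{lem:largeness-to-sparsity}).
\item Shrink each block a bounded number of times, independently of the other blocks, by colouring its points according to their colour pattern towards the other blocks.
\item By sparsity these patterns take fewer values than the block's minimum. So each shrinking costs a fixed amount of exponent and never compounds along a sequence.
\item The outcome is that the colour across two blocks depends only on the pair of blocks.
\item Then solve inside each block by the induction hypothesis, and apply pigeonhole and finite Ramsey's theorem to the blocks themselves. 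The number of blocks required (e.g.\ $R_2(\min X)\leq 4^{\min X}$) is again absorbed by sparsity.
\end{itemize}
This gives a mutual recursion $b_n\mapsto c_n\mapsto b_{n+1}$ through fixed primitive recursive maps; compare \Cref{prop:GR1-blocks}, \Cref{prop:BCSL1-blocks} and the proofs of \Cref{thm:gr1-bound} and \Cref{thm:block-homogeneous-bound}.

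Be aware that the paper's own block argument yields only block (level) homogeneity. For genuine $\RT^2_2$-homogeneity one must also reconcile the colours inside blocks with the colours across blocks. That is precisely the part the paper does not redo and instead imports through this theorem.
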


In a follow-up article~\cite{houerou2026largeness}, the authors proved that such a function $f_{\RT^2_2}$ can be chosen to be polynomial.

\subsection{Basic combinatorics}

\begin{lemma}[\cite{houerou2024pi}, $\RCA_0$]\label[lemma]{lem:split-largeness}
For every $a,b$ and for every~$\omega^{a+2b+1}$-large$(\theta)$ set~$X$, then there are some $k \in \NN$ and some $\omega^a$-large$(\theta)$ pairwise $\theta$-apart subsets $X_0 < \dots < X_{k-1}$ of~$X$ such that every $H \in \prod_{i < k} X_i$ is $\omega^b$-large$(\theta)$ (here, we abuse the product notation and see $H$ as a subset of $\NN$ intersecting every $X_i$ exactly once rather than a tuple in the product $X_0 \times \dots \times X_{k-1}$).
\end{lemma}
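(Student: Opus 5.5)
The plan is to prove the statement by induction on~$b$, with $a$ and~$X$ universally quantified. The whole proof works in~$\RCA_0$: for fixed~$X$, the existence of the blocks is a bounded search over finite objects, so the induction is on a $\Pi^0_1$ formula (universal in $a$ and in the code of~$X$). Throughout I will use two standard facts about \Cref{defi:largeness-rca0-bsig2-t}, proved by induction on~$n$:
\begin{itemize}
\item \emph{Upward closure:} supersets of $\bbomega^n$-large$(\theta)$ sets are $\bbomega^n$-large$(\theta)$.
\item \emph{Monotonicity in the exponent:} every $\bbomega^{n+1}$-large$(\theta)$ set contains an $\bbomega^{n}$-large$(\theta)$ subset (peel off the first piece).
\end{itemize}
I also use the remark after the definition of apartness: subsets of $\theta$-apart sets are $\theta$-apart, and apartness is transitive.

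\textbf{Base case $b=0$.} Let $X$ be $\bbomega^{a+1}$-large$(\theta)$. Then $X\setminus \min X$ is $\bbomega^a\cdot\min X$-large, so it contains an $\bbomega^a$-large subset~$X_0$, assuming $\min X \geq 1$. Take $k=1$: every transversal of $(X_0)$ is a singleton, hence $\bbomega^0$-large.

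\textbf{Inductive step $b \to b+1$.} Let $X$ be $\bbomega^{a+2b+3}$-large$(\theta)$ and let $x_0=\min X$. The set $X\setminus\{x_0\}$ contains $x_0$ pairwise $\theta$-apart $\bbomega^{a+2b+2}$-large sets $P_0<P_1<\cdots$; I use only the first two.
\begin{itemize}
\item Inside $P_0$, choose an $\bbomega^a$-large set $Z_0$ by monotonicity.
\item Let $p=\min P_1$. Then $P_1\setminus\{p\}$ contains $p$ pairwise apart $\bbomega^{a+2b+1}$-large sets $Q_0<\dots<Q_{p-1}$.
\item Since $P_0<P_1$, we have $p>\max Z_0$, so there are more than $\max Z_0$ of the~$Q_j$.
\item Apply the induction hypothesis to each $Q_j$. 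This yields blocks $Y^j_0<\dots<Y^j_{k_j-1}\subseteq Q_j$, each $\bbomega^a$-large and pairwise apart, such that every transversal of the family $(Y^j_i)_i$ is $\bbomega^b$-large.
\end{itemize}
The new family is $Z_0$ followed by all the $Y^j_i$ in order.

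\textbf{Checking the new family.} Every block is $\bbomega^a$-large by construction. Pairwise apartness has three sources:
\begin{itemize}
\item $Z_0$ versus any $Y^j_i$: they sit inside $P_0$ and $P_1$ respectively, which are apart.
\item Blocks from different $Q_j$: the $Q_j$ are apart.
\item Blocks from the same $Q_j$: this is given by the induction hypothesis.
\end{itemize}
Now let $H$ be a transversal and let $h_0\in Z_0$ be its minimum. The set $H\setminus\{h_0\}$ is the union of transversals $H_j$ of the families for the $Q_j$. The $H_j$ are $\bbomega^b$-large and pairwise $\theta$-apart, being subsets of the $Q_j$. There are $p>\max Z_0\ge h_0$ of them, so the first $h_0$ witness that $H\setminus \min H$ is $\bbomega^b\cdot h_0$-large. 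Hence $H$ is $\bbomega^{b+1}$-large$(\theta)$.

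\textbf{The main obstacle.} The delicate point is exactly this counting. The first block $Z_0$ must have its maximum bounded by the number of independent $\bbomega^b$-large-transversal families that follow it. Getting this bound costs two exponent levels per step, which explains the $2b$ in $a+2b+1$. The first level supplies the separation $P_0<P_1$, so that $\min P_1$ dominates $\max Z_0$. The second level converts $\min P_1$ into that many apart pieces.

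\textbf{Degenerate cases.} I must handle the cases where $\min X$ or $\min P_1$ is $0$ or~$1$, where the needed number of pieces is not available. I would try first to check that $\bbomega^n\cdot 0$- and $\bbomega^n\cdot 1$-largeness behave well enough: for instance, $\min P_1\ge 1$ is automatic once $P_0$ is nonempty and lies below~$P_1$. Failing that, I would strengthen the induction hypothesis to sets with $\min X\geq 2$, and observe that removing the elements $0,1$ costs at most one extra exponent, which the exponent in the statement absorbs.
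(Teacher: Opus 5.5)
Your inductive step is sound whenever $\min X \ge 2$. The first exponent level gives $P_0<P_1$, and with it both the $\theta$-apartness of $Z_0$ from everything later and the bound $\min P_1>\max Z_0$. The second level turns $\min P_1$ into that many independent families, and your counting is right. The paper only cites this lemma, so there is no in-paper proof to compare against.

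The genuine gap is the case $\min X=1$, which the statement covers. There $X\setminus\{1\}$ is merely $\omega^{a+2b+2}\cdot 1$-large$(\theta)$, so there is no second piece $P_1$. Your fallback fails because your induction is tight at exponent $a+2b+1$: the $Q_j$ must be $\omega^{a+2b+1}$-large to invoke the hypothesis for $b$.
\begin{itemize}
\item If you pass to the single $\omega^{a+2b+2}$-large piece $P\subseteq X\setminus\{1\}$ and rerun the step inside $P$, the $Q_j$ come out only $\omega^{a+2b}$-large.
\item Instantiating the hypothesis with $a-1$ in place of $a$ only yields $\omega^{a-1}$-large blocks.
\item Using the element $1$ as (part of) the first block does not work: $\{1\}$ is not $\omega^a$-large for $a\ge 1$, and nothing makes it $\theta$-apart from the later blocks.
\end{itemize}
So ``the exponent in the statement absorbs it'' is false for the induction as you set it up.

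The spare level sits in your base case, which needs no peeling: for $b=0$, the single block $X_0=X$ (with $k=1$) works for any $\omega^a$-large $X$. So prove by your induction the stronger claim: for all $a$, every $\omega^{a+2b}$-large$(\theta)$ set $X$ with $\min X\ge 2$ admits the required blocks. Your inductive step goes through verbatim with every exponent lowered by one (you can take $Z_0=P_0$). The side condition propagates because $\min Q_j>\min P_1>\min X\ge 2$.

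The lemma then follows by spending the extra level.
\begin{itemize}
\item If $0\in X$, take the single block $\{0\}$. It is $\omega^n$-large for every $n$ vacuously, and so is its only transversal.
\item If $\min X=1$, apply the claim to an $\omega^{a+2b}$-large subset of $X\setminus\{1\}$, whose minimum is at least $2$.
\item If $\min X\ge 2$, apply the claim to $X$ itself, which is $\omega^{a+2b}$-large by your monotonicity and upward-closure facts.
\end{itemize}
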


Given an increasing function $g : \NN \to \NN$, a set $X \subseteq \NN$ is \emph{$g$-sparse} if for every~$x, y \in X$ with $x < y$, then $g(x) < y$. We shall only consider $g$-sparsity for primitive recursive functions~$g$. 

\begin{lemma}[Folklore, see Ketonen and Solovay~\cite{ketonen1981rapidly}]\label[lemma]{lem:fast-growing-size}
For every primitive recursive function $g : \NN \to \NN$, there exists some $a \in \omega$ such that every $\omega^a\mbox{-large}$ (and a fortiori $\omega^a\mbox{-large}(\theta)$) set $X$ satisfies $|X| > g(\min X - 1)$.
\end{lemma}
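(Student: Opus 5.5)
We argue by external induction on~$a$: for each standard~$a$, $\omega^{a}$-largeness forces cardinality above $h_a(\min X - 1)$ for a suitable function~$h_a$. We then use the fact that every primitive recursive~$g$ is eventually dominated by some level of a fast-growing hierarchy, which is itself primitive recursive at each finite level. Concretely, let $h_0(x) = x+1$ and $h_{a+1}(x) = h_a^{(x+1)}(x)$, the $(x+1)$-fold iterate; this is essentially the Grzegorczyk/L\"ob--Wainer hierarchy $F_a$. The standard fact that every primitive recursive function is dominated by some $F_a$ gives a standard $a_0$ with $g(x) \le h_{a_0}(x)$ for all~$x \ge 1$. The boundary cases are absorbed by shifting $a_0$ up by one, using $h_{a+1}(x) \ge h_a(x)+1$ and monotonicity. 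Since $\omega^{a}$-large$(\theta)$ sets are in particular $\omega^a$-large (the $\theta$-apartness requirement only removes witnesses), it suffices to treat plain largeness.

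The core claim, proven by induction on~$a$ in $\RCA_0$ for each fixed standard~$a$, is that every $\omega^{a+1}$-large set $X$ satisfies $|X| > h_a(\min X - 1)$, or more conveniently that $\max X \ge h_a(\min X)$ together with a count of elements. I would prove the cardinality version directly. For $a = 0$, an $\omega$-large set~$X$ has $X \setminus \min X$ of size $\ge \min X$ (it is $\omega^0 \cdot \min X$-large, hence contains $\min X$ disjoint nonempty blocks). So $|X| \ge \min X + 1 > \min X - 1 + 1$. For the inductive step, write $m = \min X$. Then $X \setminus \{m\}$ contains $m$ consecutive $\omega^{a+1}$-large blocks $X_0 < \dots < X_{m-1}$. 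Each block $X_{i+1}$ has minimum larger than the maximum of $X_i$, and $\max X_i \ge \min X_i + |X_i| - 1 > h_a(\min X_i - 1)$ by the induction hypothesis. Chaining these gives $\min X_{m-1} \ge h_a^{(m-1)}(m)$ roughly, and the last block has size $> h_a(\min X_{m-1}-1)$. Hence $|X| > h_a^{(m)}(m-1) \ge h_{a+1}(m-1)$, up to an index shift that I would fix by adjusting the definition of $h_{a+1}$.

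Combining, take $a = a_0 + 1$ (or $a_0 + 2$ to absorb shifts). The hard part is not the combinatorics, which is a routine iteration count. It is making the statement hold \emph{in} $\RCA_0$ with the paper's formalized largeness. The domination $g \le h_{a_0}$ must be provable in $\RCA_0$ (indeed in $\ISig_1$) for the fixed primitive recursive~$g$. This follows from the usual induction on the primitive recursive definition of~$g$, and each $h_a$ is provably total in $\ISig_1$ for standard~$a$. The induction on the number of blocks inside the inductive step is a $\Sigma^0_1$ (indeed bounded, as everything lives below $\max X$) induction, hence available. I would therefore present the proof as: (1) choose the hierarchy and cite domination, (2) the block-counting induction above, and (3) a remark on formalization, deferring the folklore details to Ketonen and Solovay.
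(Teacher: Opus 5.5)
The paper gives no proof of this lemma; it cites it as folklore from Ketonen and Solovay. Your strategy, comparing $\omega^{a}$-largeness with the fast-growing hierarchy and invoking majorization of primitive recursive functions, is the standard one. Your block-chaining induction is correct for $\min X \geq 1$. In fact it gives the stronger bound $|X| > h_a^{(m)}(m)$ for every $\omega^{a+2}$-large $X$ with $m=\min X\geq 1$.

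\textbf{Boundary case $\min X = 1$.} The final combination breaks here. Since $h_{a+1}(0)=h_a^{(1)}(0)=h_a(0)$, you have $h_a(0)=1$ for every $a$. So the bound $|X|>h_a(\min X-1)$ only yields $|X|\geq 2$ when $\min X=1$, however large $a$ is, while you need $|X|>g(0)$, which can be arbitrarily large. Your majorization $g\le h_{a_0}$ only covers $x\geq 1$. The proposed absorption ``using $h_{a+1}(x)\ge h_a(x)+1$'' fails exactly at $x=0$, so shifting $a_0$ by one or two does not help with the bound in the form you state it.

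\textbf{Repair.} Use the stronger chaining bound: $|X|>h_a^{(m)}(m)\geq h_a(\min X)$ for $\omega^{a+2}$-large $X$ with $\min X\ge 1$. Then choose $a_0$ with $g(x)\le h_{a_0}(x+1)$ for all $x$. This follows from your majorization on $x\geq 1$ together with monotonicity, after enlarging $a_0$ so that $h_{a_0}(1)\geq g(0)$. That enlargement is possible because $h_a(1)\to\infty$ and $h_a$ is nondecreasing in $a$. With this choice, $a=a_0+2$ works.

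\textbf{The case $\min X=0$.} This case must be excluded. The set $\{0\}$ is $\omega^a$-large$(\theta)$ for every $a$, because the required $\omega^{a-1}\cdot 0$-largeness of $\emptyset$ is vacuous. So both your core claim and the lemma as literally stated (reading $\min X-1$ as truncated subtraction) fail there whenever $g(0)\geq 1$. This is harmless: the lemma is only applied in \Cref{lem:largeness-to-sparsity}, to sets contained in $]x,y]$. Still, the statement should be read with $\min X\geq 1$.

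Your formalization remarks are fine: $a$ is standard with external induction on $a$, the induction along the blocks is $\Sigma^0_1$, and majorization of a fixed primitive recursive $g$ is provable in $\ISig_1$.
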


Combining \Cref{lem:fast-growing-size} with \Cref{lem:split-largeness}, one can always assume that an $\omega^n$-large$(\theta)$ set is $g$-large for some fixed primitive recursive function~$g$, at the cost of a polynomial overhead in the exponent:

\begin{lemma}[$\RCA_0$]\label[lemma]{lem:largeness-to-sparsity}
For every primitive recursive function~$g : \NN \to \NN$, there is some~$a \in \NN$ such that for every~$n \in \NN$,
every $\bbomega^{2n+a}$-large$(\theta)$ set contains a $g$-sparse $\bbomega^n$-large$(\theta)$ subset.
\end{lemma}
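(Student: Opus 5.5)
Fix a primitive recursive $g$. Apply \Cref{lem:fast-growing-size} to a primitive recursive function $\tilde g \geq g$ (say $\tilde g(x) = g(x+1) + x + 2$, made increasing) to obtain $a_0 \in \omega$ such that every $\bbomega^{a_0}$-large$(\theta)$ set $Z$ satisfies $|Z| > \tilde g(\min Z - 1)$. Set $a = a_0 + 1$. Given an $\bbomega^{2n+a}$-large$(\theta)$ set $X$, apply \Cref{lem:split-largeness} with parameters $a_0$ and $b = n$ (so that $a_0 + 2n + 1 = 2n + a$). This yields some $k$ and pairwise $\theta$-apart $\bbomega^{a_0}$-large$(\theta)$ subsets $X_0 < \dots < X_{k-1}$ of $X$ such that every transversal $H \in \prod_{i<k} X_i$ is $\bbomega^n$-large$(\theta)$.

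It then suffices to choose the transversal so that it is $g$-sparse. I will take $h_0 = \min X_0$ and, for $i \geq 1$, pick $h_i \in X_i$ with $h_i > g(h_{i-1})$. This is a finite recursion of length $k$ with a $\Delta^0_0$ choice at each step, so it goes through in $\RCA_0$ by $\Sigma^0_1$-induction (indeed bounded $\Delta^0_0$-definitions suffice). The resulting set $H = \{h_0 < \dots < h_{k-1}\}$ lies in $\prod_i X_i$ and is increasing, since $X_{i-1} < X_i$. It is $g$-sparse: for $x < y$ in $H$ we get $g(x) < y$ by chaining consecutive elements, using $g$ increasing and $g(x) \geq x$, which we may assume by replacing $g$ with a larger increasing primitive recursive function, since sparsity for a larger function implies it for $g$.

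The main obstacle is to justify that such an $h_i$ exists, i.e.\ that $\max X_i > g(h_{i-1})$. Since $h_{i-1} \leq \max X_{i-1} < \min X_i$, we have $h_{i-1} \leq \min X_i - 1$, so $g(h_{i-1}) \leq g(\min X_i - 1)$ by monotonicity. The size bound gives $|X_i| > \tilde g(\min X_i - 1) \geq g(\min X_i - 1) + \min X_i$, hence $\max X_i \geq \min X_i + |X_i| - 1 > g(\min X_i - 1) \geq g(h_{i-1})$. This is exactly why the size bound is applied to a slightly inflated $\tilde g$, and it forces the exponent overhead to be $2n + a$ rather than $n + a$, as in \Cref{lem:split-largeness}. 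If the precise offset in \Cref{lem:fast-growing-size} (whether it is indexed at $\min X - 1$ or $\min X$) turns out different, I will adjust $\tilde g$ by a shift and increase $a$ accordingly.
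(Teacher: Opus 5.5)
Your proof is correct and follows essentially the paper's route: get the size bound from \Cref{lem:fast-growing-size}, apply \Cref{lem:split-largeness} with $b = n$, and take a $g$-sparse transversal. The paper simply takes $H = \{\max X_i : i < k\}$, which your own estimate $\max X_i > g(\min X_i - 1)$ already justifies, so the greedy choice (and the inflated $\tilde g$) is unnecessary; on the other hand, your choice $a = a_0 + 1$ correctly accounts for the $+1$ in the exponent of \Cref{lem:split-largeness}, which the paper's proof glosses over.
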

\begin{proof}
Fix~$g$. By \Cref{lem:fast-growing-size}, there is some~$a \in \NN$ such that every $\bbomega^a$-large$(\theta)$ set~$X$ satisfies $|X| > g(\min X-1)$. Let $X$ be an $\bbomega^{2n+a}$-large$(\theta)$ set. By \Cref{lem:split-largeness}, there are some $\bbomega^a$-large$(\theta)$ pairwise $\theta$-apart subsets $X_0 < \dots < X_{k-1}$ of~$X$ such that every~$H \in \Pi_{i < k} X_i$ is $\bbomega^n$-large$(\theta)$. In particular, the set $H = \{\max X_i : i < k\}$ is $\bbomega^n$-large$(\theta)$. Moreover, for every~$x, y \in H$ such that $x < y$, the interval $]x, y]$ contains an $\bbomega^a$-large$(\theta)$ set, so $g(x) < y$. It follows that $H$ is $g$-sparse.
\end{proof}

\subsection{Largeness and variable words}

Largeness is defined in terms of sets of integers, while the considered theorems are statements about words and variable words. We bridge the two notions by defining the notions of ordered and unordered $X$-variable word over a finite set~$X \subseteq \NN$.

\begin{definition}
Let~$w$ be an (ordered) $n$-variable word over a finite alphabet~$A$.
We write $\set(w)$ for the set of positions of first occurrences of the variable kinds.
We say that $w$ is \emph{$\alpha$-large$(\theta)$} if $\set(w)$ is $\alpha$-large$(\theta)$.
\end{definition}

Given a finite set~$X$, an (ordered) $|X|$-variable word~$w$ over~$A$ such that $\set(w) = X$ is called an \emph{(ordered) $X$-variable word}. We are now ready to define largeness versions of the Graham-Rothschild theorem, of the Ordered Variable Word theorem and the Carlson-Simpson lemma.

\begin{definition}
A set $X \subseteq_{fin} \NN$ is said to be \emph{$\GR^n_\ell(k)$-$\alpha$-large$(\theta)$} if for every $X$-variable word~$w$ over an alphabet $A$ of size $k$ and every coloring $f : \USub^{n,=}_A(w) \to \ell$, there exists some $\alpha$-large$(\theta)$ $v \in \USub^{=}_A(w)$ such that $\USub^{n,=}_A(v)$ is $f$-monochromatic. The set~$X$ is \emph{$\GR^n$-$\alpha$-large$(\theta)$ if it is $\GR^n_\ell(k)$-$\alpha$-large$(\theta)$ for every~$k, \ell < \min X$.}
\end{definition}

Given two sets $X_0 < X_1$ and an $X_0 \cup X_1$-variable word~$w$, it will often be useful to decompose $w$ into $w_0 \cdot w_1$ with $|w_0| < \min X_1$, where $w_0$ is an $X_0$-variable word and $w_1$ is a \emph{located} $X_1$-variable word. By \emph{located word}, we mean a word~$u$ whose domain is an interval in~$\NN$ rather than an initial segment of~$\NN$. In this case, we write $|u|$ for the least upper bound of its domain, and consider the concatenation of two located words only if the disjoint union of their domain forms an interval.

\begin{definition}
A set $X \subseteq_{fin} \NN$ is said to be \emph{$\OVW^n_\ell(k)$-$\alpha$-large$(\theta)$} if for every ordered $X$-variable word~$w$ over an alphabet $A$ of size $k$ and every coloring $f : \OSub^{n,\star}_{A}(w) \to \ell$, there exists some $\alpha$-large$(\theta)$ $v \in \OSub^{=}_{A}(w)$ such that $\OSub^{n,\star}_{A}(v)$ is $f$-monochromatic. The set~$X$ is \emph{$\OVW^n$-$\alpha$-large$(\theta)$ if it is $\OVW^n_\ell(k)$-$\alpha$-large$(\theta)$ for every~$k, \ell < \min X$.}
\end{definition}

\begin{definition}
A set $X \subseteq_{fin} \NN$ is said to be \emph{$\CSL^n_\ell(k)$-$\alpha$-large$(\theta)$} if for every $X$-variable word~$w$ over an alphabet $A$ of size $k$ and every coloring $f : \USub^{n,\star}_A(w) \to \ell$, there exists some $\alpha$-large$(\theta)$ $v \in \USub^{=}_A(w)$ such that $\USub^{n,\star}_A(v)$ is $f$-monochromatic. The set~$X$ is \emph{$\CSL^n$-$\alpha$-large$(\theta)$ if it is $\CSL^n_\ell(k)$-$\alpha$-large$(\theta)$ for every~$k, \ell < \min X$.}
\end{definition}

For every $n$-variable word~$w$ over an alphabet~$A$ of size~$k$, there exists a canonical isomorphism 
$$
\iota_w : \begin{cases} 
    [k]^{<n, d} \to \USub^{d,\star}_A(w) & \text{for every } d < n \\
    [k]^{n, d} \to \USub^{d,=}_A(w) & 
\end{cases}
$$
which maps the unique element of $[k]^{|w|,|w|}$ to $w$. Accordingly, if $w$ is an ordered $n$-variable word, there is an isomorphism 
$$
\iota_{w,<} : \begin{cases} 
    [k]^{<n, d}_{<} \to \OSub^{d,\star}_{A}(w) & \text{for every } d < n \\
    [k]^{n, d}_{<} \to \OSub^{d,=}_{A}(w) & 
\end{cases}
$$
Thanks to these isomorphisms, one can simplify the definition of the largeness versions of our theorems:

\begin{lemma}\label[lemma]{lem:gr-large-canon}
A set~$X = \{ x_0 < \dots < x_{a-1} \}$ is $\GR^n_\ell(k)$-$\alpha$-large$(\theta)$ iff for every coloring $f : [k]^{a,n} \to \ell$, there is some~$v \in \USub^{=}_{[k]}([k]^{a,a})$ such that $\{ x_i : i \in \set(v) \}$ is $\alpha$-large$(\theta)$ and $\USub^{n,=}_{[k]}(v)$ is $f$-monochromatic.
\end{lemma}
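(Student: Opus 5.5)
The plan is a transfer argument along the canonical isomorphism $\iota_w$. The right-hand condition is exactly the defining condition of $\GR^n_\ell(k)$-$\alpha$-large$(\theta)$ applied to the single \qt{template} $a$-variable word $[k]^{a,a}$ (the word $x_0x_1\cdots x_{a-1}$ over the alphabet $[k]$). The only twist is that largeness is measured on the relabelled set $\{x_i : i \in \set(v)\}$ rather than on $\set(v)$ itself. So I would prove that, for an arbitrary $X$-variable word $w$, the map $\iota_w$ respects substitution and sends the positions of first occurrences in the template to the corresponding elements of $X$.

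The key structural fact comes first. Let $w$ be an $X$-variable word over $A$ with $|A| = k$, and identify $A$ with $[k]$. Then $\iota_w(u) = w[u]$ for $u \in [k]^{a,d}$. Three properties are needed.
\begin{enumerate}
\item $\iota_w$ is a bijection from $[k]^{a,d}$ onto $\USub^{d,=}_A(w)$, defined for all $d \le a$. Surjectivity holds by definition of $\USub^{d,=}$. Injectivity holds because each variable $x_i$ of $w$ occurs, at position $x_i$, so $w[u]$ at position $x_i$ recovers $u(i)$.
\item $\iota_w$ commutes with substitution: $w[v[u]] = (w[v])[u]$ whenever $v \in [k]^{a,m}$ and $u \in [k]^{m,n}$. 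This is a letter-by-letter check, since substitution is just composition of the letter maps, and no cutting occurs because all lengths are full. It follows that $\iota_w$ maps $\USub^{n,=}_{[k]}(v)$ bijectively onto $\USub^{n,=}_A(w[v])$.
\item $\set(w[v]) = \{x_i : i \in \set(v)\}$. The first occurrence of the variable $y_j$ in $w[v]$ lies at the first position $p$ with $w(p) = x_i$ where $v(i) = y_j$. Since the first occurrence of $x_i$ in $w$ is at $x_i$, and $x_i$ is increasing in $i$, this position is $x_{i_j}$, where $i_j$ is the first index with $v(i_j) = y_j$, that is, $i_j = \min \set(v)$ relative to $y_j$.
\end{enumerate}
Property (iii) is the step I expect to require care, because of the interleaving of occurrences of different variables. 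Still, it reduces to monotonicity of $i \mapsto x_i$ together with the fact that the first occurrence of $x_i$ precedes every occurrence of $x_{i'}$ for $i' > i$.

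With these facts, both directions follow.

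For ($\Rightarrow$), take $w$ to be the $X$-variable word with $w(x_i) = x_i$ and the letter $0$ elsewhere. Pull back $f$ via $\iota_w$ to a coloring of $\USub^{n,=}_A(w)$, and apply largeness to get some $v' \in \USub^{=}_A(w)$. Set $v = \iota_w^{-1}(v')$. Then (iii) gives the largeness of $\{x_i : i \in \set(v)\}$, and (ii) gives monochromaticity.

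For ($\Leftarrow$), start from a given $w$ and a coloring $f$. The composite $f \circ \iota_w$ is a coloring of $[k]^{a,n}$. Apply the hypothesis to obtain $v$, and take $w[v] \in \USub^{=}_A(w)$. Its $\set$ is $\alpha$-large$(\theta)$ by (iii), and $\USub^{n,=}_A(w[v]) = \iota_w[\USub^{n,=}_{[k]}(v)]$ is monochromatic by (ii).

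Everything above is finite combinatorics on codes, so it formalizes in $\RCA_0$.
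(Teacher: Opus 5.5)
Your proposal is correct and follows the argument the paper intends: the paper gives no proof of this lemma beyond pointing to the canonical isomorphism $\iota_w$, and your three properties (bijectivity of $\iota_w$, compatibility with substitution, and $\set(w[v]) = \{x_i : i \in \set(v)\}$) are exactly the details it leaves implicit. Your choice of filler letter in the ($\Rightarrow$) direction tacitly assumes $k \geq 1$; the statement needs this too, since for $k = 0$ and $0 \notin X$ there are no $X$-variable words at all, so the left-hand side holds vacuously while the right-hand side can fail.
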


\begin{lemma}\label[lemma]{lem:ovw-large-canon}
A set~$X = \{ x_0 < \dots < x_{a-1} \}$ is $\OVW^n_\ell(k)$-$\alpha$-large$(\theta)$ iff for every coloring $f : [k]^{< a,n}_{<} \to \ell$, there is some~$v \in \OSub^=_{[k]}([k]^{a,a})$ such that $\{ x_i : i \in \set(v) \}$ is $\alpha$-large$(\theta)$ and $\OSub^{n,\star}_{[k]}(v)$ is $f$-monochromatic.
\end{lemma}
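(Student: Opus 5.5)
We prove \Cref{lem:ovw-large-canon} by transporting colorings and solutions along the canonical isomorphism $\iota_{w,<}$, exactly as for \Cref{lem:gr-large-canon}. The key observation is that both sides of the equivalence quantify over the same combinatorial data up to relabeling. Fix an ordered $X$-variable word $w$ over an alphabet $A$ of size $k$. Identify $A$ with $[k]$ by a bijection, and view $[k]^{a,a}$ as the ordered $a$-variable word $x_0 x_1 \cdots x_{a-1}$ (the only element of $[k]^{a,a}_<$). Then $\iota_{w,<}$ sends a word $u$ to $w[u]$. It is a bijection from $[k]^{<a,n}_{<}$ onto $\OSub^{n,\star}_A(w)$, and from $\OSub^{=}_{[k]}([k]^{a,a})$ onto $\OSub^{=}_A(w)$.

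First I check that $\iota_{w,<}$ commutes with substitution: $\iota_{w,<}(v[u]) = w[v[u]] = (w[v])[u] = \iota_{w,<}(v)[u]$. This is a routine unfolding of the definitions, including the cutting convention at the first occurrence of $x_{|u|}$. Since $w$ is ordered, cutting $w$ before the first occurrence of $x_{j}$ corresponds to cutting $v$ at position $j$. It follows that $\iota_{w,<}$ maps $\OSub^{n,\star}_{[k]}(v)$ onto $\OSub^{n,\star}_A(\iota_{w,<}(v))$.

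Second I check how $\set$ transforms. If $v \in \OSub^=_{[k]}([k]^{a,a})$, the first occurrence of the variable $x_j$ in $w[v]$ lies in the block of $w$ coming from the first position $i$ of $v$ carrying $x_j$. Since $w$ is ordered, the first occurrence of $x_i$ in $w$ is at position $x_i$ of $X$, and it precedes the occurrences of later variables. Hence $\set(w[v]) = \{ x_i : i \in \set(v) \}$. This step is the only point requiring care: one must confirm that the first occurrence of $x_j$ in $w[v]$ is exactly at the first occurrence in $w$ of the variable substituted. I expect it to be the main (mild) obstacle, and the orderedness of $w$ is what makes it work.

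With these two facts, the equivalence is immediate in both directions. For ($\Leftarrow$), given $w$ and $f : \OSub^{n,\star}_A(w) \to \ell$, apply the hypothesis to $f \circ \iota_{w,<}$. This yields some $v$, and $\iota_{w,<}(v)$ is the required $\alpha$-large$(\theta)$ solution for $f$. For ($\Rightarrow$), given $f$ on $[k]^{<a,n}_<$, take $w$ to be any ordered $X$-variable word, for instance the word with $x_j$ at position $x_j$ and a fixed letter elsewhere. Apply largeness to $f \circ \iota_{w,<}^{-1}$, and pull the resulting solution back through $\iota_{w,<}^{-1}$. All of this is finitary and $\Delta^0_0$-definable, so it goes through in $\RCA_0$.
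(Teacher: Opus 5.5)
Your proposal is correct and takes the paper's approach; the paper gives no separate proof and just invokes the canonical isomorphism $\iota_{w,<}$, and your two checks, $w[v[u]] = (w[v])[u]$ and $\set(w[v]) = \{x_i : i \in \set(v)\}$, are exactly what justify transporting colorings and solutions along it. One correction to your commentary: both checks use only that first occurrences of variables appear in increasing order, which holds for every variable word (so the unordered Lemmas \ref{lem:gr-large-canon} and \ref{lem:csl-large-canon} hold the same way). Orderedness of $w$ is needed instead to guarantee that $w[v]$ is again ordered, and your choice of a \qt{fixed letter} silently assumes $k \geq 1$.
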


\begin{lemma}\label[lemma]{lem:csl-large-canon}
A set~$X = \{ x_0 < \dots < x_{a-1} \}$ is $\CSL^n_\ell(k)$-$\alpha$-large$(\theta)$ iff for every coloring $f : [k]^{< a,n} \to \ell$, there is some~$v \in \USub^=_{[k]}([k]^{a,a})$ such that $\{ x_i : i \in \set(v) \}$ is $\alpha$-large$(\theta)$ and $\USub^{n,\star}_{[k]}(v)$ is $f$-monochromatic.
\end{lemma}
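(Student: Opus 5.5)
The plan is to transport the whole problem along the canonical isomorphism $\iota_w$ attached to an $X$-variable word~$w$, and then check that colorings and largeness transfer correctly in both directions. Write $X = \{x_0 < \dots < x_{a-1}\}$. The key structural fact is that $w$, as an $a$-variable word over~$A$ with $|A| = k$, is obtained from the generic word $[k]^{a,a}$ (the unique $a$-variable word of length $a$, namely $x_0 x_1 \cdots x_{a-1}$) by relabeling $[k]$ as $A$ and re-inserting constant letters and repeated variable occurrences. The map $\iota_w : u \mapsto w[u]$ is then a bijection $[k]^{<a,d} \to \USub^{d,\star}_A(w)$ for each $d<a$, and $[k]^{a,d} \to \USub^{d,=}_A(w)$ for $d = a$-length substitutions.

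The first step is to check that $\iota_w$ is compatible with substitution. For $v \in \USub^=_{[k]}([k]^{a,a})$ (i.e.\ $v$ an $m$-variable word of length~$a$) and $u$ an $n$-variable word of length $<m$, we need $\iota_w(v[u]) = w[v][u]$, i.e.\ $w[v[u]] = (w[v])[u]$. This is associativity of substitution, checked letter by letter. There is one subtlety: cutting before the first occurrence of $x_{|u|}$ in $v$ must correspond to cutting before the first occurrence of the $|u|$-th variable of $w[v]$ in $w$. This holds because the first occurrence of the $j$-th variable of $v$ is at position $p$ in $v$ exactly when it sits at position $x_p$ in $w$. Consequently $\iota_w$ maps $\USub^{n,\star}_{[k]}(v)$ bijectively onto $\USub^{n,\star}_A(w[v])$.

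The second step is the computation of $\set$. The first occurrences of variables in $w[v]$ are exactly the positions $x_i$ with $i \in \set(v)$, since the first occurrence of $x_i$ in $w$ is at $x_i$ and earlier positions of $w$ only carry constants or variables $x_j$ with $j<i$. Hence $\set(w[v]) = \{x_i : i \in \set(v)\}$, and the largeness conditions coincide.

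With these two facts the equivalence follows in both directions. For $(\Rightarrow)$, take $w = x_0$-positioned word $W$ of length~$x_{a-1}+1$ with $W(x_i) = x_i$ and constant $0$ elsewhere, which is an $X$-variable word over $[k]$. Given $f$ on $[k]^{<a,n}$, pull it back to $\USub^{n,\star}_{[k]}(W)$ via $\iota_W^{-1}$. Then apply the hypothesis to get a large $v' \in \USub^=(W)$, and set $v = \iota_W^{-1}(v')$. For $(\Leftarrow)$, given $w$ and $f$, compose $f\circ\iota_w$ to get a coloring of $[k]^{<a,n}$, obtain $v$, and output $w[v]$.

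The only genuine care needed is in the first step's cutting convention: the boundary case $|u| = |v|$ (no cut) versus $|u|<m$ must be matched with $\USub^{\star}$ versus $\USub^{=}$, and the constant-letter regions of $w$ beyond the cut must align. Everything else is bookkeeping, formalizable in $\RCA_0$ since all objects are finite.
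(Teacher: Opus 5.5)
Your proposal is correct and follows the route the paper intends. The paper states this lemma without a written proof and justifies it only by pointing to the canonical isomorphism $\iota_w$; your checks that $w[v[u]] = (w[v])[u]$ (with matching cut points) and that $\set(w[v]) = \{x_i : i \in \set(v)\}$ are exactly the bookkeeping that justification leaves implicit, apart from the tacit assumption $k \geq 1$ needed for the filler letter $0$ in your witness word $W$, a degenerate case the paper also ignores.
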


Since the notions of ordered and unordered $n$-variable words coincide for $n = 0$ and $n = 1$, it follows from the previous lemmas that for $n \in \{0,1\}$, any $\OVW^n$-$\alpha$-large$(\theta)$ set is $\CSL^n$-$\alpha$-large$(\theta)$.
The authors~\cite{houerou2024pi} proved explicit bounds for $\GR^0$-$\alpha$-large$(\theta)$\footnote{Actually, the authors used an ordered version of the Graham-Rotschild theorem, say $\OGR^0$-$\alpha$-largeness$(\theta)$. Simply note that any $\OGR^0$-$\alpha$-large$(\theta)$ set is $\GR^0$-$\alpha$-large$(\theta)$.} and $\OVW^0$-$\alpha$-large$(\theta)$ for $\alpha$ for the form $\bbomega^n \cdot r$:

\begin{theorem}[\cite{houerou2024pi}, $\RCA_0$]\label[theorem]{thm:gr0-bound}
There exists a primitive recursive function $f_{\GR^0} : \NN \to \NN$ such that for every~$n \in \NN$, 
every $\bbomega^{f_{\GR^0}(n)}$-large$(\theta)$ set is $\GR^0$-$\bbomega^n$-large$(\theta)$.
\end{theorem}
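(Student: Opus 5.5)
We argue by induction on~$n$, exactly as one proves that Ramsey-type largeness notions are closed under the combinatorial principle: we define $f_{\GR^0}$ recursively, with $f_{\GR^0}(0)$ a constant, and $f_{\GR^0}(n+1) = 2 f_{\GR^0}(n) + c$ for a constant~$c$ coming from \Cref{lem:largeness-to-sparsity} and \Cref{lem:split-largeness}. By \Cref{lem:gr-large-canon}, it suffices to work with colorings $f : [k]^{a} \to \ell$ of the words of length~$a = |X|$ (the $0$-variable substitutions of the canonical word), and to find a $v \in \USub^{=}_{[k]}([k]^{a,a})$ whose variable-position set, read through $X$, is $\bbomega^n$-large$(\theta)$ and such that every full substitution of~$v$ has the same color. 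The base case $n=0$ is trivial: any single variable suffices, since $\bbomega^0$-largeness only asks for non-emptiness, and we may take $v$ with one variable and assign it any color class.

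For the inductive step, let $X$ be $\bbomega^{f_{\GR^0}(n+1)}$-large$(\theta)$, and fix $k,\ell < \min X$ and a coloring~$f$. We first apply \Cref{lem:largeness-to-sparsity} with a primitive recursive $g$ dominating the Graham-Rothschild bound of \Cref{thm:graham-rothschild} (in its $m=0$ form, iterated $\ell$-fold as needed), so that we may assume $X$ is $g$-sparse. We then use \Cref{lem:split-largeness} to split $X$ into pairwise $\theta$-apart blocks $X_0 < \dots < X_{r-1}$, each $\bbomega^{f_{\GR^0}(n)}$-large$(\theta)$, such that any transversal is $\bbomega^1$-large$(\theta)$, i.e.\ $r > \min X_0$ roughly. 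The plan is to process the blocks from right to left. Having fixed, for blocks $j > i$, located variable words $v_j$ over~$X_j$, we consider the coloring of substitutions on block~$i$ induced by $f$ together with all $k^{|X_{<i}|}$ possible prefixes and all substitutions in the already-chosen suffix blocks, which yields a coloring with at most $\ell^{N}$ colors, where $N$ is bounded in terms of $\max X_{i-1}$ and $k$. Because each block is large and sparse relative to~$g$, the Graham-Rothschild theorem with these many colors, applied on~$X_i$ in its first few elements, and then the inductive hypothesis, produce an $\bbomega^n$-large$(\theta)$ located word $v_i$ on~$X_i$ such that the color of a full word depends only on the prefix before~$X_i$, not on the substitution of~$v_i$.

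After all blocks are processed, the color of a substitution of $v_0 \cdots v_{r-1}$ is determined by the choice of letters alone (no dependence on variables) but may still depend on which block we consider. We then apply the pigeonhole principle, using $\ell < \min X \le r$, to choose the final word: we keep the blocks in a set~$I$ whose indices' minimal elements witness $\bbomega^{n+1}$-largeness, namely $\min X_{i_0}$ blocks beyond the first, each $\bbomega^n$-large$(\theta)$ and $\theta$-apart, which is precisely $\bbomega^{n+1}$-large$(\theta)$ by \Cref{defi:largeness-rca0-bsig2-t}. The blocks outside~$I$ are collapsed by substituting a fixed letter.

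The main obstacle is the bookkeeping of colors: the number of prefix contexts grows with $\max X_{i-1}$, so the induced number of colors on block~$i$ is not bounded by $\min X_i$ as the induction hypothesis requires. This is where $g$-sparsity and the budget ``$k,\ell<\min X$'' in the definition of $\GR^0$-largeness are essential. I would first try to ensure, via sparsity, that $\ell^{k^{\max X_{i-1}}} < \min X_i$, so that the induction hypothesis applies directly. Everything is formalized in $\RCA_0$, since only $\Delta^0_0$ finite combinatorics with a primitive recursive recursion on~$n$ is involved.
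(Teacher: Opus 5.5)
\textbf{There is a genuine gap in your base case.} $\GR^0$-$\bbomega^0$-largeness$(\theta)$ is not trivial. If $v$ has a single variable, then $\USub^{0,=}_A(v)$ consists of the $k$ words $v[a]$ for $a \in A$, which form a combinatorial line, and all of them must receive the same colour. So the case $n=0$ is exactly the Hales--Jewett theorem, and it fails for small nonempty sets. For example, take $X=\{x\}$ with $x>2$ and $k=\ell=2$, and colour $w[0]$ and $w[1]$ differently. The only $v \in \USub^{=}_A(w)$ with nonempty $\set(v)$ is $w$ itself, and it is not monochromatic.

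You seem to have transplanted the base case of \Cref{thm:gr1-bound}. It is trivial there only because a $1$-variable $v$ has $\USub^{1,=}_A(v)=\{v\}$. Every level of your induction eventually calls the $n=0$ case on small blocks, so the entire Hales--Jewett content of the statement is unaccounted for.

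\textbf{How to repair it.} Let $g(x)=\max_{k,\ell\le x} GR(k,1,0,\ell)$, which is primitive recursive (\Cref{thm:graham-rothschild} with $d=1$, $m=0$, i.e.\ Shelah's bound). Let $f_{\GR^0}(0)$ be the $a$ that \Cref{lem:fast-growing-size} gives for this $g$. Then every $\bbomega^{a}$-large$(\theta)$ set has $|X|>g(\min X-1)\ge GR(k,1,0,\ell)$ for all $k,\ell<\min X$, which yields a monochromatic $1$-variable $v$.

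\textbf{With that fix, your inductive step works, and more simply than you describe.}
\begin{itemize}
\item You do not need Graham--Rothschild inside the step. Collapse the prefix before $X_0$ to letters. On block $i$, use the colouring indexed only by prefix substitutions, never by suffix substitutions, which are already invariant. Sparsity gives $\ell^{k^{\max X_{i-1}+1}}<\min X_i$ colours, so the induction hypothesis applies to $X_i$ directly.
\item The final pigeonhole step is unnecessary. As you state it, it would also fail: roughly $\min X_0$ blocks and $\ell$ colours do not give $\min X_{i_0}$ blocks of the same colour. But in dimension $0$ there is no ``type''. After the right-to-left sweep, every full substitution of the resulting word has one and the same colour.
\item The set $W_0\cup\dots\cup W_{r-1}$ is then $\bbomega^{n+1}$-large$(\theta)$. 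The transversal through $\min W_0$ is $\bbomega^1$-large$(\theta)$, which gives $r-1\ge\min W_0$, so $W_1,\dots,W_{\min W_0}$ witness the definition.
\end{itemize}

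\textbf{Comparison with the paper.} The paper does not prove this statement itself. It imports it from \cite{houerou2024pi}, where the bound is established for an ordered variant $\OGR^0$, and then notes that $\OGR^0$-largeness implies $\GR^0$-largeness.
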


\begin{theorem}[\cite{houerou2024pi}, $\RCA_0$]
There exists a primitive recursive function $f_{\OVW^0} : \NN \to \NN$ such that for every~$n \in \NN$, 
every $\bbomega^{f_{\OVW^0}(n)}$-large$(\theta)$ set is $\OVW^0$-$\bbomega^n$-large$(\theta)$.
\end{theorem}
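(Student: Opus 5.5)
The statement to prove is the last theorem quoted: there is a primitive recursive $f_{\OVW^0}$ such that every $\bbomega^{f_{\OVW^0}(n)}$-large$(\theta)$ set is $\OVW^0$-$\bbomega^n$-large$(\theta)$. I would argue by induction on $n$, following the pattern used for $\RT^2_2$ and $\GR^0$.

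\textbf{Reduction.} By \Cref{lem:ovw-large-canon}, it suffices to work with the canonical word. Fix $X=\{x_0<\dots<x_{a-1}\}$ and $k,\ell<\min X$. Given a coloring $f:[k]^{<a}\to\ell$ of the 0-variable words (plain words) of length $<a$, we must find an ordered variable word $v$ with two properties: $\{x_i : i\in\set(v)\}$ is $\bbomega^n$-large$(\theta)$, and all $v[u]$ with $u$ a word shorter than the dimension of $v$ receive the same color.

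\textbf{Construction.} The idea is to build $v$ block by block, as a finite Carlson–Simpson (ordered) construction.
\begin{itemize}
\item First apply \Cref{lem:largeness-to-sparsity} to pass to a $g$-sparse subset. Here $g$ bounds the primitive recursive Ramsey numbers needed below, namely the finite $OVW$ function of \Cref{thm:finite-ovw} with $d=1$, or Hales–Jewett style bounds.
\item Then apply \Cref{lem:split-largeness} to split $X$ into pairwise $\theta$-apart $\bbomega^{m}$-large$(\theta)$ pieces $X_0<\dots<X_{r-1}$, such that any transversal is $\bbomega^{n}$-large$(\theta)$.
\item Each step will contribute one new variable whose first occurrence lies in a fresh piece $X_j$. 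Since any transversal is large, the final $\set(v)$ is $\bbomega^n$-large$(\theta)$ automatically.
\end{itemize}

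\textbf{Inductive step.} The step is a finite "one more variable" argument. Suppose we already have a partial ordered word $v_j$ whose substitutions $v_j[u]$ all have color $c$. We must extend $v_j$ by a located 1-variable block inside $X_j$. The requirement is that, for every previously built substitution prefix $p$ and every letter $b\in A$, the word $p\cdot \text{block}[b]$, cut appropriately, still has color $c$. The color $c$ is chosen up front via a pigeonhole on $\ell$ colors; this costs one extra level of largeness.

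To find the block, color each candidate block by the finite tuple of colors it induces over all prefixes. There are at most $\ell^{k^{j}\cdot k}$ such tuples. Apply \Cref{thm:finite-ovw} (the case $m=0$, $d=1$) inside $X_j$. This needs $|X_j|$ to exceed a primitive recursive function of $\min X_j$, which $g$-sparsity and \Cref{lem:fast-growing-size} guarantee.

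\textbf{Main obstacle.} I expect the hard part to be ensuring that the monochromatic color does not drift between stages, i.e.\ that every new block lands on the same color $c$. A naive argument only gives homogeneity per stage. I would handle this with a nested (Baumgartner style) construction: define largeness of "good" sets by induction on the exponent, so that at each stage the remaining set stays large enough to continue, and only at the end pigeonhole the stage colors among the $\bbomega^{n}$-many stages. This last pigeonhole would use that $\bbomega^{n+1}$-largeness$(\theta)$ is partition-regular for $\ell<\min X$ colors, in the style of the $\RT^1$ case of \Cref{thm:rt22-bound}.

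To bound the whole process primitive recursively, I would count exponents: each of the finitely many layers costs a polynomial overhead by \Cref{lem:largeness-to-sparsity} and \Cref{lem:split-largeness}. The resulting $f_{\OVW^0}$ is then an iterate of polynomials, hence primitive recursive.
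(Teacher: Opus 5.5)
There is a genuine gap. It sits where you placed your ``main obstacle'', but it goes deeper than colour drift between stages.

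Your construction runs left to right: the block carrying the $j$-th variable is chosen after the earlier blocks are frozen. The colour of a word ending in a later piece can depend on the valuations of earlier variables in a way that no later block can undo. Take $k=\ell=2$, two positions $M<N$, and let $f(z)$ be the parity of the number of $1$'s in $z\uh M$ when $|z|\ge N$, and $0$ otherwise. Before length $N$ every block passes your test, so nothing stops some variable from occurring an odd number of times below $M$. From then on $f(v[u])$ depends on $u$ at every length $\ge N$, and no extension exists. So your per-stage requirement need not be satisfiable. Worse, $c$ cannot be fixed in advance at all: colour a word by the parity of the index of the piece containing its length, and no word with one first occurrence per piece is monochromatic.

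Your fallback, pigeonholing stage colours at the end, presupposes that each stage \emph{has} a colour. That means $f(v[u])$ must depend only on the piece containing $|v[u]|$ and not on $u$. This is the whole difficulty, and nothing in the proposal produces it. The tool you cite does not help either. \Cref{thm:finite-ovw} with $m=0$, $d=1$ only asks that the singleton $\OSub^{0,\star}_A(v)=\{v[\epsilon]\}$ be monochromatic. More generally, that theorem controls the dimension of $v$, not the largeness of $\set(v)$.

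The paper only quotes this statement from \cite{houerou2024pi}. The missing idea is the right-to-left block-grouping the paper itself uses for $\GR^1$ and $\BCSL^1$ (\Cref{prop:GR1-blocks}, \Cref{thm:gr1-bound}).

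Handle the last block first. At block $i$, colour each full valuation of its variables by a tuple of colours of words ending in the later blocks $j$. The tuple is indexed by all valuations of the earlier blocks and by $j$, and it is well defined because the later blocks are already invariant. Apply the largeness version of Hales--Jewett/Graham--Rothschild (\Cref{thm:gr0-bound}, in ordered form) to make these colours independent of block $i$. Then apply the induction hypothesis inside block $i$ to the words ending there, again taking the product over earlier valuations. $(x\mapsto x^{x^x})$-sparsity, together with the number of blocks being small, keeps all these colour counts below the relevant minima.

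This repair does not fit your one-shot split of $X$ into a family all of whose transversals are large. The product colouring at an early piece would be indexed by all later pieces, and there are far more of them than that piece can absorb. That is why one groups only $\ell\cdot\min X$ or $\min X$ blocks at a time, obtains a colour depending only on the block, and climbs by the mutual induction $\bbomega^n\to\bbomega^n\cdot\min X\to\bbomega^{n+1}$:
\begin{itemize}
\item for $\bbomega^n\cdot\min X$, keep $\min X$ of $\ell\cdot\min X$ equally coloured blocks;
\item for $\bbomega^{n+1}$, keep one point of a block together with a later block of the same colour, instantiating or merging the discarded variables, which is harmless by invariance.
\end{itemize}
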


\begin{corollary}\label[theorem]{thm:csl0-bound}
There exists a primitive recursive function $f_{\CSL^0} : \NN \to \NN$ such that for every~$n \in \NN$, 
every $\bbomega^{f_{\CSL^0}(n)}$-large$(\theta)$ set is $\CSL^0$-$\bbomega^n$-large$(\theta)$.
\end{corollary}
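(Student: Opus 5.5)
We take $f_{\CSL^0} = f_{\OVW^0}$ and derive the statement directly from the preceding theorem on $\OVW^0$-largeness, via the canonical reformulations of \Cref{lem:ovw-large-canon} and \Cref{lem:csl-large-canon}. The argument is purely combinatorial and goes through in $\RCA_0$, since it only manipulates finite objects.

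Fix $n$ and an $\bbomega^{f_{\OVW^0}(n)}$-large$(\theta)$ set $X = \{x_0 < \dots < x_{a-1}\}$. By the theorem on $\OVW^0$-largeness, $X$ is $\OVW^0$-$\bbomega^n$-large$(\theta)$. To show that $X$ is $\CSL^0$-$\bbomega^n$-large$(\theta)$, fix $k, \ell < \min X$ and a coloring $f : [k]^{<a,0} \to \ell$. By \Cref{lem:csl-large-canon}, we must find some $v \in \USub^{=}_{[k]}([k]^{a,a})$ such that:
\begin{itemize}
\item $\{x_i : i \in \set(v)\}$ is $\bbomega^n$-large$(\theta)$;
\item $\USub^{0,\star}_{[k]}(v)$ is $f$-monochromatic.
\end{itemize}

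The key observation is that $0$-variable words are just words, so $[k]^{<a,0}_{<} = [k]^{<a,0} = [k]^{<a}$. The coloring $f$ is therefore also a legitimate instance in the sense of \Cref{lem:ovw-large-canon}. Applying that lemma yields some $v \in \OSub^{=}_{[k]}([k]^{a,a})$ such that $\{x_i : i \in \set(v)\}$ is $\bbomega^n$-large$(\theta)$ and $\OSub^{0,\star}_{[k]}(v)$ is $f$-monochromatic.

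It remains to check that this $v$ witnesses the unordered conditions. There are two points.
\begin{itemize}
\item Every ordered variable word is a variable word, and ordered substitution instances are among the unordered ones. Hence $\OSub^{=}_{[k]}(w) \subseteq \USub^{=}_{[k]}(w)$ for the ordered word $w = [k]^{a,a}$, and so $v \in \USub^{=}_{[k]}([k]^{a,a})$.
\item For the ordered word $v$, we have $\USub^{0,\star}_{[k]}(v) = \OSub^{0,\star}_{[k]}(v)$. Indeed, both sets consist of the words $v[u]$ with $u \in [k]^{<|v|}$ a plain word, and the ordering constraint only concerns variables of $u$, of which there are none.
\end{itemize}
Thus $\USub^{0,\star}_{[k]}(v)$ is $f$-monochromatic, as required.

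The main point needing care is the second bullet: checking that $\OSub^{0,\star}$ and $\USub^{0,\star}$ genuinely coincide for an ordered $v$, with matching conventions on where words are cut. This is a direct unfolding of the definitions, so no real obstacle is expected.
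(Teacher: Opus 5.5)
Your proof is correct and follows the paper's route. The paper also sets $f_{\CSL^0} = f_{\OVW^0}$, and justifies it by the remark after \Cref{lem:csl-large-canon} that $\OVW^0$-largeness implies $\CSL^0$-largeness because ordered and unordered $0$-variable words coincide; you unfold that remark explicitly via \Cref{lem:ovw-large-canon} and \Cref{lem:csl-large-canon}, including the inclusion $\OSub^{=} \subseteq \USub^{=}$ that the remark leaves implicit.
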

\begin{proof}
Let $f_{\CSL^0} = f_{\OVW^0}$ since any $\OVW^0$-$\bbomega^n$-large$(\theta)$ set is $\CSL^0$-$\bbomega^n$-large$(\theta)$.
\end{proof}

Let $X_0 < X_1$ be two sets, and $X = X_0 \cup X_1$. Let $w$ be an $X$-variable word over an alphabet~$A$. For clarity, we assume that the variables space ranges over $\{ x_n : n \in X \}$. We can write $w = w_0 \cdot w_1$, where $|w_0| < \min X$, so that $w_0$ is an $X_0$-variable word over~$A$ and $w_1$ is a located $X_1$-variable word over the extended alphabet~$B = A \sqcup \{ x_p : p \in X_0 \}$.
For every~$v_0 \in \USub^{0,=}_A(w_0)$, $v_0 \cdot w_1[x_p \mapsto v_0(p)]$ is an $X_1$-variable word over~$A$, where $w_1[x_p \mapsto v_0(p)]$ means that every occurrence of the variable~$x_p$ with~$p \in X_0$ is replaced with the value of~$v_0$ at position~$p$.

\section{Graham-Rothschild theorem}\label[section]{sec:largeness-gr1}

The goal of this section is to prove the following theorem:

\begin{theorem}[$\RCA_0$]\label[theorem]{thm:gr1-bound}
There exists a primitive recursive function $f_{\GR^1} : \NN \to \NN$ such that for every~$n \in \NN$, every $\bbomega^{f_{\GR^1}(n)}$-large$(\theta)$ set is $\GR^1$-$\bbomega^n$-large$(\theta)$. 
\end{theorem}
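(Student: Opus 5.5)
We reduce $\GR^1$-largeness to $\GR^0$-largeness (\Cref{thm:gr0-bound}) by splitting $X$ into many $\theta$-apart blocks, each large enough for $\GR^0$, and handling the one-variable colorings block by block. Fix $k, \ell < \min X$, an $X$-variable word $w$ over $A$ with $|A| = k$, and a coloring $f : \USub^{1,=}_A(w) \to \ell$. By \Cref{lem:gr-large-canon}, we may work with the canonical word over $[k]$.

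\textbf{Splitting.} First apply \Cref{lem:split-largeness} to $X$, which is $\bbomega^{f_{\GR^1}(n)}$-large$(\theta)$ with $f_{\GR^1}(n)$ to be fixed at the end. This produces pairwise $\theta$-apart blocks $X_0 < \dots < X_{m-1}$, each $\bbomega^{a}$-large$(\theta)$, with $a \geq f_{\GR^0}(c)$ for a suitable $c$. Every transversal of the blocks is then $\bbomega^{b}$-large$(\theta)$, with $b$ large enough for a final application of \Cref{thm:rt22-bound}, or of a Ramsey-type argument on the index set. Using \Cref{lem:largeness-to-sparsity}, we may also assume the set of block-minima is sparse for a primitive recursive function bounding the number of relevant words. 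The exponents compose primitive recursively, so $f_{\GR^1}$ will be primitive recursive.

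\textbf{Key step: a $\GR^0$-type argument on each block, processed from the last block down.} A $1$-variable word $u \in \USub^{1,=}_A(v)$ is determined by a choice of a letter or the variable for each variable kind of $v$. We will build $v$ so that the color of $u$ depends only on the block containing the first variable occurrence of $u$. Define this by downward induction on $j$. At stage $j$, the blocks $X_{j+1}, \dots$ have already been shrunk to subwords $v_{j+1}, \dots$. Color each $0$-variable substitution instance $s$ of the block-$j$ part by the following vector:
\begin{itemize}
\item for each substitution of the earlier blocks (letters and possibly the variable), and
\item for each way of placing the variable in block $j$ or later,
\end{itemize}
record the induced $f$-colors. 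The variable substitutions inside block $j$ can be handled by treating $\star$ as an extra letter, so we really apply $\GR^0$ over the alphabet $A \sqcup \{\star\}$ with at most $\ell^{N}$ colors. Here $N$ is bounded by a primitive recursive function of $\max X_{j-1}$ and of the data below $\min X_j$. Since $\min X_j$ exceeds this bound by sparsity, the hypothesis $k, \ell < \min$ of $\GR^0$-largeness is met. \Cref{thm:gr0-bound} then yields an $\bbomega^{c}$-large$(\theta)$ subword on block $j$ on which this color is constant. Because all blocks are processed, we cannot process earlier blocks first without losing homogeneity. This is the \textbf{main obstacle}: the color must anticipate all substitutions of earlier, not yet chosen blocks, and the counting is what forces the sparsity requirement.

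\textbf{Conclusion.} After all blocks are processed, the color of a $1$-variable substitution depends only on the index $j$ of the block where the variable first appears, together with the choices made in earlier blocks. A further pigeonhole step, coloring first the pair of indices and then the index, and applying \Cref{thm:rt22-bound} or plain $\RT^1_\ell$-largeness to the transversal set of size $\bbomega^{b}$, selects a subfamily of blocks with a single color. We then take $v$ to be:
\begin{itemize}
\item one variable kind from each selected block, via a transversal which is $\bbomega^n$-large$(\theta)$ by the choice of $b$;
\item letters on all other positions.
\end{itemize}
Then $\USub^{1,=}_A(v)$ is $f$-monochromatic. Finally, check that all the steps formalize in $\RCA_0$: only bounded searches over finite objects are used, together with the earlier lemmas, which are $\RCA_0$-provable.
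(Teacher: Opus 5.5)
Your proposal has a genuine gap in its key step, in two places.

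\textbf{The $\star$-as-a-letter trick.} Treating $\star$ as an extra letter and applying $\GR^0$ over $A \sqcup \{\star\}$ does not give a legitimate subword. The homogeneous word lies in $\USub^{=}_{A\sqcup\{\star\}}$ of the block, so some of its fixed positions may carry $\star$. Indeed, for a coloring that detects whether $\star$ occurs, every homogeneous subword with a variable must carry $\star$ somewhere. Such a word is not in $\USub^{=}_A(w)$. Moreover, every substitution into it already contains the variable, which kills all words whose variable first occurs in a later block.

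The sound version of this idea is the paper's: the extra letters are the \emph{earlier variable kinds}, $B = A \sqcup \{x_p : p \in X_0 \cup \dots \cup X_{j-1}\}$. This is legitimate because those variables already occur before block $j$. But it only controls words whose variable first occurs before block $j$ (Step $j$.a). Words whose variable first occurs inside block $j$ are handled in the paper by the induction hypothesis on $n$, that is, a lower-level $\GR^1$ application inside the block (Step $j$.c). Your argument has no counterpart for this.

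\textbf{The color count.} This second problem rules out your transversal design altogether. It applies even if you repair the first point by cutting each block down to a single variable as soon as it is processed, as in the classical derivation of Graham--Rothschild for one-parameter words from Hales--Jewett. For a transversal $H$ of $D$ blocks to be even $\bbomega^1$-large$(\theta)$, you need $D = |H| > \min H \geq \min X_0$. When $|A| \geq 2$, the color of a word of type $j$ must be made independent of the letter substituted in block $0$. So block $0$ must be homogenized for a coloring that records one color per later block. This coloring has up to $\ell^{D-1}$ values, which exceeds $\min X_0$ once $\ell \geq 2$. But $\GR^0$-largeness only handles fewer than $\min X_0$ colors.

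Splitting this into $D-1$ successive applications with $\ell$ colors each does not help either: block $0$ would then need to be $\bbomega^{f_{\GR^0}^{(D-1)}(0)}$-large, an exponent not bounded by any function of $n$. Sparsity of the block minima cannot fix this, since $D$ is not bounded by anything below $\min X_0$. Your claim that $N$ depends only on $\max X_{j-1}$ and data below $\min X_j$ overlooks the entries indexed by later blocks.

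\textbf{How the paper avoids this.} This is exactly why the paper runs a mutual induction on $n$ through $\bbomega^{b_n}$ and $\bbomega^{c_n}$.
\begin{itemize}
\item It uses only $\ell\cdot\min X$ (respectively $\min X$) blocks, so that $\ell^k < \min X_0$ by $(x\mapsto x^{x^x})$-sparsity.
\item Inside each block it keeps a large piece $W_i$ on which words of type $i$ are monochromatic, by the induction hypothesis.
\item It then obtains $\bbomega^n\cdot\min X$ as a union of $\min X$ equally colored blocks, and $\bbomega^{n+1}$ as $\{x\}\cup W_{i_1}$.
\end{itemize}
Finally, the $\RT^2_2$ step in your conclusion plays no role. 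It also could not remove a residual dependence on the letters chosen in earlier blocks, since that is a dependence on the whole tuple of letters, not on pairs of indices.
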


The core of the argument lies in the following technical proposition, which state sufficient largeness conditions for \qt{grouping} blocks of solutions into a larger solution.

\begin{proposition}\label[proposition]{prop:GR1-blocks}
    Let $n \in \NN$, let $r : \NN \to \NN$ be some non-decreasing function, and suppose that the following induction hypothesis holds:
    \begin{quote}There exists some $a \in \NN$ such that every $(x \mapsto x^{x^x})$-sparse $\bbomega^a$-large$(\theta)$ set $X$ is $\GR^1$-$\bbomega^n\cdot r(\min X)$-large$(\theta)$. 
    \end{quote}
    Let $k, \ell \in \NN$, and $X_0 < \dots < X_{k-1}$ be $\theta$-apart and $\bbomega^{f_{\GR^0}(f_{\GR^0}(a))}$-large$(\theta)$ sets such that $\ell^k < \min X_0$. Let $X = X_0 \cup \dots \cup X_{k-1}$ and let $w = w_0 \cdots  w_{k-1}$ be an $X$-variable word over an alphabet $A$ with $|A| < \min X_0$.

    Then, for every coloring $f : \USub^{1,=}_A(w) \to \ell$, there exist $\bbomega^n\cdot r(\min X_i)$-large$(\theta)$ subsets $W_i \subseteq X_i$ and some $W_0 \cup \dots \cup W_{k-1}$-variable word $v \in \USub_A^=(w)$ such that the color of any $1$-variable word $u \in \USub_A^{1,=}(v)$ only depends on which $X_i$ the first occurrence of the variable belongs.
\end{proposition}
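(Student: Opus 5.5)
The plan is to treat the blocks one after the other, from the last block down to the first. For each block we apply $\GR^0$ twice, to make the coloring independent of everything outside the current block, and then apply the induction hypothesis once, inside the block. A 1-variable word $u \in \USub^{1,=}_A(w)$ has its variable's first occurrence in a unique block $X_i$. Below the first occurrence $u$ is a constant word, and above it $u$ uses letters together with the variable. Write $w = w_0 \cdots w_{k-1}$. Then such a $u$ decomposes as $u = c_0 \cdots c_{i-1} \cdot u_i \cdot d_{i+1}\cdots d_{k-1}$. Here the $c_j$ are the constant substitutions of $w_j$, $u_i$ is a 1-variable substitution of $w_i$ (over the alphabet extended by the earlier variables, which have been substituted by letters), and each $d_j$ is a substitution of $w_j$ by elements of $A \sqcup \{\star\}$. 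The goal is to freeze the dependence of the color on the parts $c_j$ and $d_j$, and then to homogenize $u_i$ within block $i$.

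\textbf{Step 1 (outer homogenization via $\GR^0$).} Process $i = k-1, k-2, \dots, 0$ in turn. At stage $i$, view the coloring of words whose variable starts in block $i$ as a coloring of the \emph{constant} substitutions of the blocks $j > i$ over the extended alphabet $A\sqcup\{\star\}$. The color values range over functions from ($1$-variable substitutions of block $i$, constant substitutions of blocks $<i$) to $\ell$. That is too many colors to apply $\GR^0$ directly. Instead, at each stage we apply $\GR^0$ once to each later block $X_j$, and we also absorb the prefix blocks. The bound $\ell^k < \min X_0$ is what allows this: the color data seen by a single block is a function on a set of size below $\min X_j$-ish, so the number of colors stays below $\min X_j$ and $\GR^0$ applies. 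Concretely, the first application of $f_{\GR^0}$ shrinks each $X_j$ to an $\bbomega^{f_{\GR^0}(a)}$-large set, making the color independent of the constant parts $d_j$ of the later blocks. The second application shrinks each $X_j$ to an $\bbomega^{a}$-large set, making the color independent of the constant parts $c_j$ of the earlier blocks. Apartness is preserved since we only pass to subsets. Afterwards, the color of $u$ depends only on $u_i$, giving a coloring $g_i$ of $\USub^{1,=}(w_i')$.

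\textbf{Step 2 (inner homogenization).} Using \Cref{lem:largeness-to-sparsity}, first pass to $(x\mapsto x^{x^x})$-sparse subsets; this is affordable if the lemma's overhead is folded into the choice of $a$ or of $f_{\GR^0}$. The induction hypothesis then makes each resulting $\bbomega^a$-large set $\GR^1$-$\bbomega^n\cdot r(\min X_i)$-large$(\theta)$. Apply it to $g_i$ to obtain an $\bbomega^n\cdot r(\min X_i)$-large subset $W_i$ and a substitution $v_i$ homogeneous for $g_i$. Concatenate the pieces into $v$. Since $r$ is non-decreasing and $\min$ only grows when passing to a subset, the largeness needed for $W_i$ refers to $\min X_i$ as stated; we take care to keep $\min$ at least the original so the quoted bound is correct, or adjust to $\min W_i$. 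The color of any $u \in \USub^{1,=}_A(v)$ is then determined by $i$ alone.

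\textbf{Main obstacle.} The delicate point is the bookkeeping of colors in Step 1. We must verify that, at every stage, the coloring handed to $\GR^0$ on block $X_j$ uses fewer than $\min X_j$ colors and is over an alphabet of size below $\min X_j$. This holds because the blocks are ordered and the data from the other blocks can be encoded within the bound $\ell^k$ (or a primitive recursive function of it, controlled by sparsity). We also need the two rounds of $\GR^0$ to commute, so that the later refinements do not destroy the earlier independence; they do not, since homogeneity passes to sub-substitutions. If the color count exceeds $\min X_j$, the fallback is to use the sparsity from \Cref{lem:largeness-to-sparsity} to guarantee $\min X_j \geq$ the needed exponential of $\min X_0$.
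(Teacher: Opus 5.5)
\textbf{There is a genuine gap: the schedule of your applications cannot be carried out, and the missing idea is the interleaving.} Your per-block budget is the right one: two applications of $\GR^0$ and one of the induction hypothesis, processing the last block first.

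\emph{Why the second round fails.} To apply $\GR^0$-largeness to $X_j$, the coloring of the constant substitutions of $w_j$ must take fewer than $\min X_j$ values. So the color tuple may only be indexed by data coming from the prefix $w_0\cdots w_{j-1}$. That count is controlled by the $(x\mapsto x^{x^x})$-sparsity of $X$; the bound $\ell^k<\min X_0$ only serves block $0$. Your second round removes the dependence of type-$i$ colors on the constant part $c_j$ for $j<i$, and it happens in Step 1, before any inner homogenization. At that moment a type-$i$ color still depends on its own 1-variable part $u_i$, which is a substitution of the block $X_i$ lying entirely above $X_j$. A tuple indexed by all such $u_i$ takes in general far more than $\min X_j$ values, so $\GR^0$ does not apply.

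\emph{Why the order cannot simply be reversed.} The induction hypothesis only returns $\bbomega^n\cdot r(\min X_i)$-large sets. These are too small for a further $\GR^0$ step on the same block.

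\emph{Why the first round overruns the budget.} ``At each stage apply $\GR^0$ once to each later block'' charges $X_j$ once for every earlier type, i.e.\ about $j$ times. That exceeds the $\bbomega^{f_{\GR^0}(f_{\GR^0}(a))}$ budget unless all types are bundled into a single coloring.

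The paper's proof fixes both issues by finishing block $i$ completely before touching block $i-1$:
\begin{itemize}
\item Step $i$.a applies $\GR^0$ to $w_i$ once, with a bundled coloring. Its value is the tuple, over all 1-variable substitutions $u^-$ of the prefix, of the colors of the words of type $<i$. This is well defined because the later blocks already did the same.
\item Step $i$.b applies $\GR^0$ to $w_i$ with the tuple, over constant prefix substitutions $u^-$ and types $j>i$, of the colors of type-$j$ words. This is well defined only because every block $j>i$ has already gone through its induction-hypothesis step, and the intermediate blocks through their step b. So a type-$j$ color then depends only on the valuations of $X_0\cup\dots\cup X_i$.
\item Step $i$.c applies the induction hypothesis to $w_i$ with the tuple over constant prefix substitutions.
\end{itemize}
In each case the number of colors is $\ell$ to a power bounded in terms of $\max X_{i-1}$, hence below $\min X_i$ by sparsity.

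Finally, re-sparsifying inside the blocks with \Cref{lem:largeness-to-sparsity} is not affordable with the given largeness. The intended setting, as in the proof of \Cref{thm:gr1-bound}, is that $X$ itself is $(x\mapsto x^{x^x})$-sparse, so every subset inherits sparsity.
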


\begin{proof}
We say that some 1-variable word~$u \in \USub^{1,=}_A(w)$ is of \emph{type~$i$} (for some $i < k$) if the first occurrence of the variable belongs to~$X_i$. Given a set~$Y \subseteq X$, the 1-variable words of type~$i$ are \emph{$Y$-invariant} if their color do not depend on the valuations of the variables $\{x_n : n \in Y\}$.
\smallskip

\noindent
\textbf{Proof outline:}
The construction will follow $3k$ steps, in the following order:
\begin{center}
Step $(k-1)$.a, Step $(k-1)$.b, Step $(k-1)$.c, $\dots$, Step 0.a, Step 0.b, Step 0.c.
\end{center}
Let $i <k$, and assume $W_j \subseteq X_j$ have been defined for $j \in \{i+1, \dots, k-1\}$.
\begin{itemize}
    \item In Step i.a, we are going to find some subset $Y_i \subseteq X_i$ and some $X_0 \cup \dots \cup X_{i-1} \cup Y_i \cup W_{i+1} \cup \dots \cup W_{k-1}$-variable word $w^{i,a} \in \USub_A^=(w^{i+1,c})$ (with the convention that $w^{k,c} = w$) such that the $1$-variable words of type $< i$ are $Y_i$-invariant.
    \item In Step i.b, we will find some subset $Z_i \subseteq Y_i$ and some $X_0 \cup \dots \cup X_{i-1} \cup Z_i \cup W_{i+1} \cup \dots \cup W_{k-1}$-variable word $w^{i,b} \in \USub_A^=(w^{i,a})$ such that the $1$-variable words of type $>i$ are $Z_i$-invariant. 
    \item In Step i.c, we will find some subset $W_i \subseteq Z_i$ and some $X_0 \cup \dots \cup X_{i-1} \cup W_i \cup W_{i+1} \cup \dots \cup W_{k-1}$-variable word $w^{i,c} \in \USub_A^=(w^{i,b})$ such that the $1$-variable words of type $i$ are $W_i$-invariant.
\end{itemize}
By induction, at the end of Step i.c, for every 1-variable word $u \in \USub_A^{1,=}(w^{i,c})$,
\begin{itemize}
    \item if the type of $u$ is $< i$, then it is $W_i \cup \dots \cup W_{k-1}$-invariant;
    \item if the type of $u$ is $\geq i$, then $f(u)$ only depends on the valuations of~$X_0 \cup \dots \cup X_{i-1}$ and on its type.
\end{itemize}
In particular, assuming that each $W_i$ is $\bbomega^n \cdot r(\min X_i)$-large$(\theta)$, at the end of Step 0.c, $w^{0,c} \in \USub^=_A(w)$ is a $W_0 \cup \dots \cup W_{k-1}$-variable word satisfying the conclusion of \Cref{prop:GR1-blocks}.
\smallskip

\noindent
\textbf{Formal proof:}
Consider the extended alphabets 
\begin{align*}
    B &:= A \sqcup \{x_p : p \in X_0 \cup \dots \cup X_{i-1}\}\\
    C &:= A \sqcup \{x_p : p \in X_0 \cup \dots \cup X_{i-1} \cup X_i\}
\end{align*}

\noindent
\textbf{Step i.a}. 
Assume $w^{i+1,c} = w^- \cdot w_i \cdot w^+$ has already been defined, where $w^-$ is an $X_0 \cup \cdots \cup X_{i-1}$-variable word over~$A$, $w_i$ is an $X_i$-variable word over~$B$ and $w^+$ is an $W_{i+1} \cup \dots \cup W_{k-1}$-variable word over~$C$. By construction, we have $w^- = w_0 \cdots w_{i-1}$.

\noindent
For every $u^- \in \USub_A^{1,=}(w^-)$, let $$g_{u^-} : \USub_B^{0,=}(w_i) \to \ell$$ be defined by
$$g_{u^-}(u_i) = f(u^- \cdot u_i[x_p \mapsto u^-(p)] \cdot u^+[x_p \mapsto (u^- \cdot u_i)(p)])$$
for any $u^+ \in \USub_{C}^{0,=}(w^+)$. By Steps j.a for $j > i$, the coloring $g_{u^-}$ is well-defined.
Finally, let $$g : \USub_B^{0,=}(w_i) \to \ell^{|\USub_A^{1,=}(w^-)|}$$ be defined by $$g(u_i) = \langle g_{u^-}(u_i) : u^- \in \USub_A^{1,=}(w^-)\rangle$$

We have $\min X_i > \ell^{|\USub_A^{1,=}(w^-)|}$. Indeed, in the case where $i = 0$ we have $\ell^{|\USub_A^{1,=}(w^-)|} = \ell$, and in the case where $i \neq 0$, we have $\ell^{|\USub_A^{1,=}(w^-)|} \leq \ell^{(\max X_{i-1})^{\max X_{i-1}}} < \min X_i$ by $(x \mapsto x^{x^x})$-sparsity of $X$. Hence, by definition of $f_{\GR^0}$, there exist some  $\bbomega^{f_{\GR^0}(a)}$-large$(\theta)$ subset $Y_i \subseteq X_i$ and some $Y_i$-variable word $w_i^{i,a} \in \USub_B^=(w^i)$ such that $\USub_B^{0,=}(w^{i,a}_i)$ is $g$-monochromatic. By letting $w^{i,a} := w^- \cdot w_i^{i,a} \cdot w^+$, we have $w^{i,a} \in \USub_A^=(w^{i+1,c})$ and all the $1$-variable words of type $<i$ are $Y_i$-invariant.
\smallskip

\noindent
\textbf{Step i.b}. Assume $w^{i,a} = w^- \cdot w^{i,a}_i \cdot w^+$ have already been defined. By construction, we have $w^- = w_0 \cdots w_{i-1}$.

\noindent
For every $u^- \in \USub_A^{0,=}(w^-)$ and $j \in \{i+1, \dots, k-1\}$, let $$h_{u^-,j} : \USub_{B}^{0,=}(w_i^{i,a}) \to \ell$$
be defined by $$h_{u^-,j}(u_i) = f(u^- \cdot u_i[x_p \mapsto u^-(p)] \cdot u^+[x_p \mapsto (u^- \cdot u_i)(p)])$$

for any $u^+ \in \USub_{C}^{1,=}(w^+)$ of type $j$. By Steps
j.a, j.b and j.c, the coloring $h_{u^-,j}$ is well-defined.
Finally, let $$h :  \USub_{B}^{0,=}(w_i^{i,a}) \to \ell^{|\USub_A^{0,=}(w^-)| \times (k-i-1)}$$
be defined by $$h(u_i) = \langle h_{u^-,j}(u_i) : j \in \{i+1, \dots, k-1\} \textit{ and } u^- \in \USub_A^{0,=}(w^-) \textit{ of type $j$} \rangle$$

By $(x \mapsto x^{x^x})$-sparsity of $X$ and from the fact that $\ell^k < \min X_0$, we have $\min Y_i > \ell^{|\USub_A^{1,=}(w^-)| \times k}$, hence, by definition of $f_{\GR^0}$, there exists an  $\bbomega^{a}$-large$(\theta)$ subset $Z_i \subseteq Y_i$ and some $Z_i$-variable word $w_i^{i,b} \in \USub_B^=(w^{i,a}_i)$ such that $\USub_{B}^{0,=}(w^{i,b})$ is $h$-monochromatic. By letting $w^{i,b} := w^- \cdot w_i^{i,b} \cdot w^+$, we have $w^{i,b} \in \USub_A^=(w^{i,a})$ and all the $1$-variable words of type $>i$ are $Z_i$-invariant.
\smallskip

\noindent
\textbf{Step i.c}. Assume $w^{i,b} = w^- \cdot w^{i,b}_{i} \cdot w^+$ have already been defined. By construction, we have $w^- = w_0 \cdots w_{i-1}$.

For every $u^- \in \USub_A^{0,=}(w^-)$, let $$t_{u^-} : \USub_{B}^{1,=}(w_i^{i,b}) \to \ell$$ be defined by $$t_{u^-}(u_i) = f(u^- \cdot u_i[x_p \mapsto u^-(p)] \cdot u^+[x_p \mapsto (u^- \cdot u_i)(p)]))$$
for any $u^+ \in \USub_{C}^{0,=}(w^+)$. By Step j.a for $j > i$, the coloring $t_{u^-}$ is well-defined.
Finally, let 
$$t : \USub_{B}^{1,=}(w_i^{i,b}) \to \ell^{|\USub_A^{0,=}(w^-)|}$$
be defined by $$t(u_i) = \langle t_{u^-}(u_i) : u^- \in \USub_A^{0,=}(w^-) \rangle$$
By $(x \mapsto x^{x^x})$-sparsity of $X$, we have $\min Z_i > \ell^{|\USub_A^{0,=}(w^-)|}$, hence, by assumption, there exists some $\bbomega^n\cdot r(\min Z_i)$-large$(\theta)$ subset $W_i \subseteq Z_i$ (and therefore $\bbomega^n\cdot r(\min X_i)$-large$(\theta)$ as $r$ is non-decreasing) and some $W_i$-variable word $w_i^{i,c} \in \USub_B^=(w^{i,b})$ such that $\USub_{B}^{1,=}(w^{i,c})$ is $t$-monochromatic. By letting $w^{i,c} := w^- \cdot w_i^{i,c} \cdot w^+$, we have $w^{i,c} \in \USub_A^=(w^{i,b})$ and all the $1$-variable words of type $i$ are $W_i$-invariant. \\

Finally, $v := w^{0,c}$ has the desired properties.
\end{proof}

We are now ready to prove the main theorem of our section:

\begin{reptheorem}{thm:gr1-bound}[$\RCA_0$]
    There exists a primitive recursive function $f_{\GR^1} : \NN \to \NN$ such that for every $n  \in \NN$, every $\bbomega^{f_{\GR^1}(n)}$-large$(\theta)$ set is $\GR^1$-$\bbomega^n$-large$(\theta)$.
\end{reptheorem}
\begin{proof}
Consider the sequences $(b_n)_{n \in \NN}$ and $(c_n)_{n \in \NN}$ defined by mutual induction as follows: 
$$b_0 = 0, c_{n} = f_{\GR^0}(f_{\GR^0}(b_n)) + 2 \text{ and }  b_{n+1} = f_{\GR^0}(f_{\GR^0}(c_{n})) + 1.$$ 
Then, let $f_{\GR^1}(n) = 2b_n + n_0$ with $n_0$ given by \Cref{lem:largeness-to-sparsity} is such that every $\bbomega^{f_{\GR^1}(n)}$-large$(\theta)$ set contains a $(x \mapsto x^{x^x})$-sparse $\bbomega^{b_n}$-large$(\theta)$ subset.

We prove by mutual induction on the definition of $(b_n)$ and $(c_n)$ that 
\begin{itemize}
    \item[(1)] every $(x \mapsto x^{x^x})$-sparse $\bbomega^{b_n}$-large$(\theta)$ set $X$ is $\GR^1$-$\bbomega^n$-large$(\theta)$; and that
    \item[(2)] every $(x \mapsto x^{x^x})$-sparse $\bbomega^{c_n}$-large$(\theta)$ set $X$ is $\GR^1$-$\bbomega^n\cdot \min X$-large$(\theta)$.
\end{itemize}
Every $\bbomega^{b_0}$-large$(\theta)$ set $X$ is $\GR^1$-$\bbomega^0$-large$(\theta)$, hence we have the base case.

Assume that, for some $n \in \NN$, every $(x \mapsto x^{x^x})$-sparse and $\bbomega^{b_n}$-large$(\theta)$ set is $\GR^1$-$\bbomega^n$-large$(\theta)$. Let $X$ be $(x \mapsto x^{x^x})$-sparse and $\bbomega^{c_n}$-large$(\theta)$ and let $f : \USub^{1,=}_A(w) \to \ell$ be a coloring for some $X$-variable word $w$, some $\ell < \min X$ and some alphabet $A$ with $|A| < \min X$. As $c_{n} = f_{\GR^0}(f_{\GR^0}(b_n)) + 2$, $\hat X := X \setminus \{\min X\}$ is $\bbomega^{f_{\GR^0}(f_{\GR^0}(b_n)) + 1}$-large$(\theta)$. Then, by $(x \mapsto x^{x^x})$-sparsity, $$\min \hat X > \min X \times \ell$$ and there exist subsets $X_0 < \dots < X_{\min X \times \ell -1}$ of $\hat X$ that are pairwise $\theta$-apart and $\bbomega^{f_{\GR^0}(f_{\GR^0}(b_n))}$-large$(\theta)$. We also have $\min X_0 > \ell^{\min X \times \ell}$, hence, by \Cref{prop:GR1-blocks} with $r$ being the constant function 1, there exists some $\bbomega^n$-large$(\theta)$ subsets $W_i \subseteq X_i$ for every $i < \min X \times \ell$ and some $W_0 \cup \dots \cup W_{\min X \times \ell - 1}$-variable word $v \in \USub_A^=(w)$ such that the color of any $1$-variable word $u \in \USub_A^{1,=}(v)$ only depends on its type (i.e., in which $W_i$ does the first occurrence of the variable belong) and let $c_i$ be that color. By the finite pigeonhole principle, there exists some color $c < \ell$ such that $c_i = c$ for at least $\min X$ indices $i$. Let $i_0, \dots, i_{\min X - 1}$ be those indices, and consider the $\bbomega^{n}\cdot (\min X)$-large$(\theta)$ set $W := W_{i_0}\cup \dots \cup W_{i_{\min X -1}}$. Then, by construction, any $1$-variable word $u \in \USub_A^{1,=}(v)$ whose first occurrence of the variable belongs to $W$ will have the same color. Hence, $X$ is $\GR^1$-$\bbomega^{n}\cdot (\min X)$-large$(\theta)$.

Assume that, for some $n \in \NN$, every $(x \mapsto x^{x^x})$-sparse and $\bbomega^{c_n}$-large$(\theta)$ set $Y$ is $\GR^1$-$\bbomega^n\cdot(\min Y)$-large$(\theta)$. Let $X$ be $(x \mapsto x^{x^x})$-sparse and $\bbomega^{b_{n+1}}$-large$(\theta)$ and let $f : \USub^{1,=}_A(w) \to \ell$ be a coloring for some $X$-variable word $w$, some $\ell < \min X$ and some alphabet $A$ with $|A| < \min X$. Let $X_0 < \dots < X_{\min X - 1}$ be pairwise $\theta$-apart and $\bbomega^{f_{\GR^0}(f_{\GR^0}(c_n))}$-large$(\theta)$ subsets of $X \setminus \{\min X\}$. By $(x\mapsto x^{x^x})$-sparsity of $X$, we have $\ell^{\min X} < \min X_0$, hence, by \Cref{prop:GR1-blocks} with $r$ being the identity function, there exists $\bbomega^n \cdot (\min X_i)$-large$(\theta)$ subsets $W_i \subseteq X_i$ for every $i < \min X$ and some $X_0 \cup \dots \cup X_{\min X - 1}$-variable word $v \in \USub_A^=(w)$ such that the color of any $1$-variable word $u \in \USub_A^{1,=}(v)$ only depends on its type and let $c_i$ be the color of the $1$-variable words of type $i$. By the finite pigeonhole principle, there exist two indices $i_0 < i_1$ such that $c_{i_0} = c_{i_1}$. Let $x \in W_{i_0}$ and consider the set $W := \{x\} \cup W_{i_1}$. As $x < \min X_{i_1}$, $W$ is $\bbomega^{n+1}$-large$(\theta)$. Then, by construction, any $1$-variable word $u \in \USub_A^{1,=}(v)$ whose first occurrence of the variable belongs to $W$ will have the same color. Hence, $X$ is $\GR^1$-$\bbomega^{n+1}$-large$(\theta)$.
\end{proof}

\section{Carlson-Simpson lemma}\label[section]{sec:largeness-csl1}

The goal of this section is to prove the following theorem, where a set~$X$ is $\CSL^n_\ell$-$\alpha$-large$(\theta)$ if it is $\CSL^n_\ell(k)$-$\alpha$-large$(\theta)$ for every~$k < \min X$.

\begin{theorem}[$\RCA_0$]\label[theorem]{thm:csl12-bound}
There exists a primitive recursive function $f_{\CSL^1_2} : \NN \to \NN$ such that for every~$n \in \NN$, every $\bbomega^{f_{\CSL^1_2}(n)}$-large$(\theta)$ set is $\CSL^1_2$-$\bbomega^n$-large$(\theta)$. 
\end{theorem}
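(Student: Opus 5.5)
We would prove \Cref{thm:csl12-bound} by reducing $\CSL^1_2$ to a combination of $\GR^1$ (\Cref{thm:gr1-bound}), $\CSL^0$ (\Cref{thm:csl0-bound}) and $\RT^2_2$-largeness (\Cref{thm:rt22-bound}). The difference between $\CSL^1$ and $\GR^1$ is that in $\CSL^1$ the colored 1-variable words $v[u]$ are \emph{truncated}: $u$ has length $j<|v|$, so $v[u]$ is cut just before the first occurrence of the $j$-th variable. A 1-variable word in $\USub^{1,\star}_A(v)$ is therefore determined by a cut point (the position $\set(v)(j)$) together with a 1-variable word of the full prefix $v$ restricted to $\{\set(v)(0),\dots,\set(v)(j-1)\}$.

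\textbf{Step 1 (make the color depend only on the cut and the first variable occurrence).} Let $w$ be an $X$-variable word and $f:\USub^{1,\star}_A(w)\to 2$. For each potential cut point $p\in X$, $f$ induces a coloring of the 1-variable words on the prefix before $p$. We iterate a block argument modeled on \Cref{prop:GR1-blocks}: split $X$ (made $(x\mapsto x^{x^x})$-sparse via \Cref{lem:largeness-to-sparsity}) into $\theta$-apart blocks, and process them from right to left. In each block we first apply $\CSL^0$/$\GR^0$ to make the colors of words cut in later blocks invariant under the valuations of the current block's variables. Then we apply $\GR^1$ to the product coloring indexed by the later cut points. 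The number of such cuts is bounded by $\max X$ of the later blocks, and this is exactly where sparsity guarantees $\ell^{(\cdots)}<\min$ of the current block. After this step, the color of $v[u]$ depends only on the pair $(i,j)$, where $i$ is the index of the first variable occurrence and $j$ the cut index, with $i<j$.

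\textbf{Step 2 (Ramsey for pairs).} Step 1 yields a 2-coloring $g$ of pairs $\{\set(v)(i)<\set(v)(j)\}$ of a large set $Y=\set(v)$. By \Cref{thm:rt22-bound}, if $Y$ is $\bbomega^{f_{\RT^2_2}(n)}$-large$(\theta)$, it has an $\bbomega^n$-large$(\theta)$ $g$-homogeneous subset $H$. Passing to the subword $v'\in\USub^=_A(v)$ with $\set(v')=H$ is done by substituting a fixed letter for the variables outside $H$ (merging them into constants). The colors of truncated subwords of $v'$ are then determined by pairs from $H$, so $\USub^{1,\star}_A(v')$ is monochromatic. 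The conditions $k,\ell<\min X$ only help.

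\textbf{Composing the bounds.} We take $f_{\CSL^1_2}(n)=2\,F(f_{\RT^2_2}(n))+a$, where $F$ is the exponent required by the block construction of Step 1; by analogy with the $b_n,c_n$ recursion in the proof of \Cref{thm:gr1-bound}, $F$ is a composition of $f_{\GR^0}$ and $f_{\GR^1}$. All ingredients are primitive recursive and every step is a finite combinatorial argument formalizable in $\RCA_0$.

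\textbf{Main obstacle.} The hard step is Step 1. Obtaining invariance of the color with respect to both the first-occurrence position and everything beyond it requires that the invariance obtained for the cut at block $j$ survive subsequent substitutions in earlier blocks. It also requires that the exponential number of simultaneous colorings be absorbed by sparsity. We would first attempt to reuse the three-phase (a/b/c) scheme of \Cref{prop:GR1-blocks} verbatim, with ``type'' replaced by the pair (first-occurrence block, cut block). If bookkeeping of cuts inside the same block breaks this, we would instead first apply $\GR^1$ within each block so that cuts inside a block behave uniformly, and only then run the inter-block argument.
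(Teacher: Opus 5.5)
\textbf{Gap in Step 1.} Your Step 2 is fine. It matches how the paper obtains this bound, by composing \Cref{thm:block-homogeneous-bound} with \Cref{thm:rt22-bound}, so that $f_{\CSL^1_2}=f_{\BCSL^1}\circ f_{\RT^2_2}$. The problem is Step 1, which carries all the difficulty.

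First, a one-level block decomposition never controls the 1-variable words $u$ with $\pi_0(u)$ and $\pi_1(u)$ in the \emph{same} block. Restricted to that block, such a word is a cut 1-variable word of the block, i.e.\ a $\CSL^1$ instance, which is exactly what you are trying to prove. None of your tools applied inside a block touches these words: $\GR^1$ colours only full-length words in $\USub^{1,=}$, so it says nothing about cuts, and $\GR^0$, $\CSL^0$ colour $0$-variable words. So your fallback of applying $\GR^1$ within each block does not help. These words can only be handled by recursion, i.e.\ by an induction hypothesis of the statement itself applied inside each block (the paper's Step i.e).

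Second, your count is backwards. Indexing the $\GR^1$ product colouring in block $i$ by the cut points of later blocks gives a number of colours exponential in $|X_{i+1}\cup\dots\cup X_{k-1}|$. No sparsity condition bounds this by $\min X_i$, since sparsity bounds functions of smaller elements by larger ones. The paper avoids this by first applying $\CSL^0$ to the words cut inside each block (Step i.a). This makes their colour independent of the exact cut point and of that block's valuations, so all later colourings are indexed by block indices only. That is where the hypothesis $\ell^{(2n+1)k^2}<\min X_0$ comes from.

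\textbf{The missing idea: an invariant with few colours.} After block $i$ is internally homogenised, its internal pattern must be made independent of the variables of the blocks $j<i$ processed afterwards. This is done by a $\GR^0$ step on block $j$ whose colour is \qt{the pattern of block $i$ under this substitution} (the paper's Step i.b). Suppose the pattern is the level colouring of $W_i$, i.e.\ the map sending $p<q$ in $W_i$ to the colour of the words $u$ with $\pi_0(u)=p$, $\pi_1(u)=q$, which is what your Step 1 aims for. Then there are up to $\ell^{|W_i|^2}$ patterns, far more than $\min X_j$, and no largeness bound applies. This is exactly why the paper says $\RFCSL^1$-largeness is not a good invariant.

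The paper instead introduces $f$-$\bbomega^n$-block-homogeneity, whose colour $\sigma$ takes at most $\ell^{2n+1}$ values. It proves \Cref{thm:block-homogeneous-bound} by a mutual induction between $\bbomega^n$ and $\bbomega^n\cdot\min X$ (the sequences $b_n,c_n$). Larger block-homogeneous sets are assembled using pigeonhole on the blocks' colours $\sigma_i$ and the finite Ramsey bound $R_\ell(\min X)$ across blocks. In particular, your $F$ cannot be a plain composition of $f_{\GR^0}$ and $f_{\GR^1}$. It must be defined by such a recursion, which also involves $f_{\CSL^0}$.
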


By iterating \Cref{thm:csl12-bound}, one can obtain bounds for $\CSL^1_\ell$-$\bbomega^n$-largeness$(\theta)$ for every~$\ell \in \NN$. On the other hand, there is no bound for $\CSL^1$-$\bbomega^n$-largeness$(\theta)$ in general by \Cref{prop:csl1-bsig3}. Indeed, the existence of such a bound would yield that $\RCA_0 + \forall \ell, k \CSL^1_\ell(k)$ is $\forall \Pi^0_4$-conservative over~$\RCA_0 + \BSig_2$, but $\RCA_0 \vdash \forall \ell \CSL^1_\ell(2) \to \BSig_3$, and $\BSig_3$ is not even $\Pi_1$-conservative over~$\RCA_0 + \BSig_2$.

However, one can decompose the statement $\forall \ell, k \CSL^1_\ell(k)$ into its \qt{variable word part} and its \qt{Ramsey part}, as follows: Consider the projection function~$\pi$, which maps any $n$-variable word~$w$ over an alphabet~$A$ to the $(n+1)$-tuple $\{p_0 < \dots < p_n \}$ where $p_i$ is the position of the first occurence of the $i$th variable kind, and $p_n = |w|$. Note that $\pi(w) = \set(w) \cup \{|w|\}$. The following statement is a consequence of the Carlson-Simpson lemma:

\begin{statement}[Level $\CSL$]
Fix $n \geq 0$ and~$\ell \geq 1$. For every finite alphabet~$A$ and every coloring $f : A^{<\omega, n} \to \ell$, there is an infinite $\omega$-variable word~$W$ such that for every~$u_0, u_1 \in \USub^{n,\star}_A(W)$, if $\pi(u_0) = \pi(u_1)$, then $f(u_0) = f(u_1)$.
\end{statement}

We denote by $\RFCSL^n_\ell(k)$ the Level Carlson-Simpson lemma for $\ell$-colorings of $n$-variable words over finite alphabets of size~$k$.

\begin{lemma}[Liu and Patey~\cite{liu2026reverse}]\label[lemma]{lem:csl-rfcsl-rt}
$\RCA_0 \vdash \forall n, k, \ell(\CSL^n_\ell(k) \leftrightarrow \RFCSL^n_\ell(k) \wedge \RT^{n+1}_\ell)$.
\end{lemma}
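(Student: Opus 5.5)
The equivalence splits into two directions, each proved over $\RCA_0$ with $n,k,\ell$ fixed but arbitrary.

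\emph{Forward direction.} We show $\CSL^n_\ell(k) \to \RFCSL^n_\ell(k) \wedge \RT^{n+1}_\ell$.

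For $\RT^{n+1}_\ell$, apply \Cref{prop:rt-to-csl}. Its reduction uses a unary alphabet, so it must be adapted to an alphabet of size $k \geq 1$. Define $g(w) = f(\pi(w))$, where $\pi(w) = \set(w) \cup \{|w|\}$. The map $\pi$ is exactly the $(n+1)$-tuple used in that proof.

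Let $W$ be $g$-homogeneous, and take $H$ to be the positions of the first occurrences of the variable kinds of $W$. Each $(n+1)$-subset of $H$ equals $\pi(W[v])$ for some $v$ whose letters are chosen arbitrarily in $A$. So $H$ is $f$-homogeneous.

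For $\RFCSL^n_\ell(k)$ there is nothing to prove. A monochromatic $\USub^{n,\star}_A(W)$ trivially satisfies the level condition.

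\emph{Backward direction.} Fix $f : A^{<\omega,n} \to \ell$. Apply $\RFCSL^n_\ell(k)$ to get $W$ such that, on $\USub^{n,\star}_A(W)$, the color $f(u)$ depends only on $\pi(u)$.

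Let $P = \{p_0 < p_1 < \dots\}$ be the positions of the first occurrences of the variables of $W$. Note that $\pi(W[v])$ is determined by $\pi(v)$: it equals $\{p_i : i \in \pi(v)\}$. Define $h : [\NN]^{n+1} \to \ell$ by
$$h(F) = f(W[v]) \quad \text{for any } v \in A^{<\omega,n} \text{ with } \pi(v) = F.$$
Such a $v$ exists for every $F$: place $x_j$ at the $j$th element of $F$, fill the other positions with a fixed letter, and cut the word at $\max F$. When $k = 0$ there are no non-variable letters to use, so this degenerate case has to be handled separately or excluded by convention. The coloring $h$ is well defined by the level property, and it is $\Delta^0_1$ in $W \oplus f$.

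Apply $\RT^{n+1}_\ell$ to $h$ to obtain an infinite $H = \{h_0 < h_1 < \dots\}$ with color $c$. Then build $V \in \USub_A(W)$ as a substitution into $W$ that keeps variables only at the indices in $H$. Concretely, send $x_{h_s}$ to $y_s$ and every other $x_i$ to a fixed letter $a \in A$. For any $u \in \USub^{n,\star}_A(V)$ we then have $u = W[v]$ with $\pi(v) \subseteq H$. This inclusion uses the cut convention: the length of $u$ is the first occurrence of the next variable of $V$, and that corresponds to an index in $H$. Hence $f(u) = h(\pi(v)) = c$.

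The main bookkeeping obstacle is checking this cut convention. One must verify that $V[u']$ is cut exactly at the first occurrence of the appropriate $x_{h_s}$ in $W$, and that composing substitutions stays inside $\USub_A$. Everything else is $\Delta^0_1$-definable, so the whole argument goes through in $\RCA_0$.
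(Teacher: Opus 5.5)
Your argument is correct. The paper does not reprove this lemma; it cites Liu and Patey. Your forward half is exactly the paper's proof of \Cref{prop:rt-to-csl}, with the same coloring $w \mapsto f(\pi(w))$, now using a letter from a nonempty alphabet of size $k$. Your backward half is the natural converse: collapse $f$ to a coloring of $[\NN]^{n+1}$ via $\pi$, apply $\RT^{n+1}_\ell$, and keep only the variables of $W$ indexed by $H$.

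The check you left as bookkeeping does go through, and it uses that first occurrences in $W$ are increasing. Any variable $x_i$ occurring in $W$ before position $p_{h_{|u'|}}$ satisfies $i < h_{|u'|}$, even though $W$ is unordered. Hence $V[u'] = W[v]$, where $v$ is defined as follows:
\begin{itemize}
\item $v$ has length $h_{|u'|}$;
\item $v$ carries $u'(s)$ at position $h_s$;
\item $v$ carries your fixed letter at every other position.
\end{itemize}
It follows that $\pi(v) = \{h_j : j \in \pi(u')\} \subseteq H$.

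The case $k=0$ cannot be \emph{handled separately}; it must be excluded. For $n=k=0$, $\USub^{0,\star}_{\emptyset}(W)=\{\epsilon\}$, so $\CSL^0_\ell(0)$ is provable in $\RCA_0$. But $\forall \ell\,\RT^1_\ell$ is equivalent to $\BSig_2$, so the stated equivalence fails there. The lemma must therefore be read with $k \geq 1$, which your forward direction already assumes.
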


We shall actually see that the whole induction strength of the statement $\CSL^1$ is contained in its Ramsey part, namely, $\RT^2$, by proving that its level version is $\forall \Pi^0_4$-conservative over~$\RCA_0 + \BSig_2$. One could define $\RFCSL^1$-$\bbomega^n$-largeness$(\theta)$ and prove the existence of a primitive recursive function $f_{\RFCSL^1} : \NN \to \NN$ such that every $\bbomega^{f_{\RFCSL^1}(n)}$-large$(\theta)$ set is $\RFCSL^1$-$\bbomega^n$-large$(\theta)$. However, as we shall see in the next section, neither $\CSL^1_2$-$\bbomega^n$-largeness$(\theta)$, nor $\RFCSL^1$-$\bbomega^n$-largeness$(\theta)$ provides a good invariant for proving the existence of such bounds. We now define a notion in between $\CSL^1$ and $\RFCSL^1$ which makes only sense in the finitary version of the statement, for $\bbomega^n$-largeness$(\theta)$. Given an $n$-variable word~$w$ and $i < n$, we write $\pi_i(w)$ for the position of its first occurrence of~$x_i$, and $\pi_n(w)$ for $|w|$.

\begin{definition}
Let $X \finsub \NN$ be a set, $w$ be an $X$-variable word over an alphabet~$A$,
and $f : \USub^{1,\star}_A(w) \to \ell$ be a coloring. We define $f$-$\alpha$-block-homogeneity$(\theta)$ for some color $\sigma$ inductively on~$n$ as follows:
\begin{itemize}
    \item $X$ is $f$-$\bbomega^0$-block-homogeneous$(\theta)$ for color~$\langle \rangle$ if $X \neq \emptyset$.
    \item $X$ is $f$-$\bbomega^n\cdot k$-block-homogeneous$(\theta)$ for color $\sigma \cdot c$ with~$c < \ell$ if $X = X_0 \cup \dots \cup X_{k-1}$ for some there are pairwise $\theta$-apart subsets $X_0 < \dots < X_{k-1}$ such that 
    \begin{itemize}
        \item Each $X_i$ is $\bbomega^n$-block-homogeneous$(\theta)$ for color~$\sigma$;
        \item For every~$u \in \USub^{1,\star}_A(w)$ with $\pi_0(u) \in X_i$ and $\pi_1(u) \in X_j$, if $i < j$ then $f(u) = c$.
    \end{itemize}    
    \item $X$ is $f$-$\bbomega^{n+1}$-block-homogeneous$(\theta)$ for color $\sigma \cdot c$ with~$c < \ell$ if $X \setminus \{\min X\}$ is $f$-$\bbomega^n\cdot (\min X)$-block-homogeneous$(\theta)$ for color $\sigma$ and if $f(u) = c$ for every~$u \in \USub^{1,\star}_A(w)$ with $\pi_0(u) = \min X$.
\end{itemize}
\end{definition}

Note that if $X$ is $f$-$\bbomega^n$-block-homogeneous$(\theta)$ for some color~$\sigma$, then it is a solution to the Level 1-dimensional Carlson-Simpson lemma. The main advantage of this definition is that there are only $\ell^{2n}$ many possible colors of block-homogeneity, while for $\RFCSL^1$, there would be $\ell^{|X|}$ possible colors. This difference will be used in an essential way in the next section. 

\begin{definition}
A set~$X$ is $\BlockCSL{\alpha}$ if for every $X$-variable word~$w$ over an alphabet~$A$ and every coloring $f : \USub^{1,\star}_A(w) \to \ell$ with $|A|, \ell < \min X$, there is a subset~$Y \subseteq X$ and a $Y$-variable word $v \in \USub_A(w)$ such that $Y$ is $f^v$-$\alpha$-block-homogeneous$(\theta)$ for some color, where $f^v$ is the restriction of $f$ to $\USub^{1,\star}_A(v)$.
\end{definition}

The goal is therefore to prove the following theorem:

\begin{theorem}[$\RCA_0$]\label[theorem]{thm:block-homogeneous-bound}
There exists a primitive recursive function $f_{\BCSL^1} : \NN \to \NN$ such that for every~$n \in \NN$, every $\bbomega^{f_{\BCSL^1}(n)}$-large$(\theta)$ set is $\BlockCSL{\bbomega^n}$.
\end{theorem}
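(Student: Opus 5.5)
We mimic the proof of \Cref{thm:gr1-bound}: prove a ``grouping blocks'' proposition analogous to \Cref{prop:GR1-blocks}, then run a mutual induction with two sequences $(b_n)$ and $(c_n)$. The claims are that every $(x \mapsto x^{x^x})$-sparse $\bbomega^{b_n}$-large$(\theta)$ set is $\BlockCSL{\bbomega^n}$, and that every such $\bbomega^{c_n}$-large$(\theta)$ set $X$ is $\BlockCSL{\bbomega^n \cdot \min X}$. We then set $f_{\BCSL^1}(n) = 2b_n + n_0$ using \Cref{lem:largeness-to-sparsity}. The recursion for $b_n, c_n$ composes $f_{\GR^0}$, $f_{\GR^1}$ (\Cref{thm:gr1-bound}) and $f_{\CSL^0}$ (\Cref{thm:csl0-bound}) a bounded number of times, so $f_{\BCSL^1}$ is primitive recursive. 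The point of block-homogeneity is that the colour of a block solution is a string in $\ell^{\le 2n}$, so there are fewer than $\min X$ possible colours once sparsity is used. This is what makes pigeonhole arguments over blocks possible.

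\textbf{Block proposition.} Given pairwise $\theta$-apart, sufficiently large sets $X_0 < \dots < X_{k-1}$ and an $X$-variable word $w = w_0 \cdots w_{k-1}$ with a colouring $f$ of $\USub^{1,\star}_A(w)$, we want to find $W_i \subseteq X_i$ and $v \in \USub_A^=(w)$ with two properties. First, the colour of a 1-variable word $u \in \USub^{1,\star}_A(v)$ with $\pi_0(u) \in W_i$ and $\pi_1(u) \in W_j$ for $i<j$ depends only on the pair $(i,j)$. Second, each $W_i$ is $f^v$-block-homogeneous for some colour $\sigma_i$. Note that $\pi_1(u)$ is where $u$ is cut, namely the first occurrence of the next variable of $v$.

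As in \Cref{prop:GR1-blocks}, we process the blocks from $k-1$ down to $0$, using the extended alphabets $B$ and $C$ to carry the earlier variables. Each step has three parts.
\begin{itemize}
\item[(a)] Apply $\GR^0$ (or $\CSL^0$ for cut words) to the colouring of valuations of $w_i$. This tuple-colouring makes words whose variable lies in an earlier block, and which are cut in or after block $i$, invariant under the valuation of $X_i$.
\item[(b)] Apply $\CSL^0$ to make words with $\pi_0$ in a later block invariant under valuations of $X_i$. For words cut at a later block, the dependence is only on the cut position, which becomes the type.
\item[(c)] Apply the induction hypothesis (block-$\CSL^1$ largeness) to the product colouring indexed by the valuations of the earlier part. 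This handles words with both $\pi_0$ and $\pi_1$ in block $i$.
\end{itemize}
The product colourings have at most $\ell^{(\max X_{i-1})^{\max X_{i-1}}} < \min X_i$ colours, by sparsity, so the hypotheses of the earlier theorems apply.

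\textbf{Inductive steps.} For the step $b_n \to c_n$, split $X \setminus\{\min X\}$ into many blocks and apply the proposition. Each block $i$ then carries a colour $\sigma_i \in \ell^{2n}$, and each pair $i<j$ carries a colour $c_{ij}$. Colour the pair $\{i,j\}$ by $c_{ij}$ and apply finite Ramsey for pairs together with pigeonhole on $\sigma_i$. The number of blocks is below $\min X_0$, a sparse-set bound, or alternatively we invoke \Cref{thm:rt22-bound}-style largeness at the block level. This yields $\min X$ blocks with a common $\sigma$ and a common cross colour $c$, which gives $\bbomega^n \cdot \min X$ block-homogeneity for colour $\sigma\cdot c$.

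For the step $c_n \to b_{n+1}$, we also need the colour of words with $\pi_0 = \min X$ to be constant. Pick $x \in W_{i_0}$ and a block $W_{i_1}$ arranged so that all words with $\pi_0 = x$ cut inside $W_{i_1}$ have one colour. This again comes from the pair-type invariance, since words with $\pi_0 = x$ and $\pi_1$ in $W_{i_1}$ take colour $c_{i_0 i_1}$. We need $W_{i_1}$ to be $\bbomega^n\cdot x$-block-homogeneous, so $r$ must be the identity, exactly as in \Cref{thm:gr1-bound}.

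\textbf{Main obstacle.} The hard step is ensuring the cross colour is constant on a \emph{single} pair colour. Here the nested ``$\bbomega^n \cdot k$'' clause requires all pairs $i<j$ to share colour $c$, which is a genuine $\RT^2$ instance on the block indices. The block proposition itself also produces pair colours only after the three-step invariance arguments. This is why the number of blocks must be large enough for finite Ramsey's theorem for pairs, with primitive recursive bounds, while staying below the sparsity threshold. This is the step I would check most carefully.
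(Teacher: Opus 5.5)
Your overall architecture is the paper's. You use a block-grouping proposition processed from the last block down, then the mutual induction on $(b_n,c_n)$. The $c_n$ step uses finite Ramsey's theorem plus pigeonhole on the $\sigma_i$. The $b_{n+1}$ step uses $r$ the identity together with constancy of the pair colours; your two-block argument there is fine.

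The gap is in the block proposition: sub-steps (a)--(c) do not control all 1-variable words. Take the words $u$ with $\pi_0(u)\in X_i$, for the block $i$ currently being processed, and $\pi_1(u)$ in a later block. None of your steps treats them:
\begin{itemize}
\item Step (a) concerns $\pi_0$ in an earlier block.
\item Step (b) concerns $\pi_0$ in a later block.
\item The induction hypothesis in (c) only colours $\USub^{1,\star}_B(w_i)$, that is, words cut inside $X_i$. These words instead restrict to \emph{full} 1-variable valuations of $w_i$, namely elements of $\USub^{1,=}_B(w_i)$.
\end{itemize}
Their colour depends on where in $X_i$ the variable first occurs and on the letters used in block $i$. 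So the cross colour $c_{ij}$ is not well defined. You feed it into Ramsey's theorem, and you also use it for the words with $\pi_0=x$ in the $b_{n+1}$ step. The paper handles this with a separate step (Step i.d). It applies \Cref{thm:gr1-bound} to the product colouring of $\USub^{1,=}_B(w_i)$, indexed by $u^-\in\USub^{0,=}_A(w^-)$ and by the later block index. That is exactly where $f_{\GR^1}$ enters the recursion; you list it there but never invoke it.

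Second, step (b) cannot be carried out as stated. Making \emph{every} word with $\pi_0$ in a later block $X_{i_0}$ invariant under valuations of $X_i$ includes the words with $\pi_0$ and $\pi_1$ both in $X_{i_0}$. Recording all their colours gives far more than $\min X_i$ colours, so no bound applies.

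What must be made invariant is only two things: the pair colours for $i<i_0<i_1$, and the block-homogeneity colours $\sigma_{i_0}\in\ell^{2n+1}$ that the induction hypothesis produced for the later blocks. The second is genuinely needed. Otherwise, once the earlier blocks are instantiated, $\sigma_{i_0}$ varies with the instantiation, and the claim that each $W_{i_0}$ is $f^v$-block-homogeneous for a single colour fails.

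Since these words see a full valuation of $w_i$, this is a $\GR^0$ instance, not a $\CSL^0$ instance. It has at most $\ell^{|\USub^{0,=}_A(w^-)|\cdot(2n+1)k^2}$ colours, which is why the paper assumes $\ell^{(2n+1)k^2}<\min X_0$. This is the essential use of the fact that block-homogeneity colours are short strings.

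By contrast, the step you flag as the main obstacle is routine. Taking $\ell^{2n}\cdot R_\ell(\min X)$ blocks, with $R_\ell(d)\le\ell^{\ell d}$, fits under the sparsity bound.
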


In particular, any $\bbomega^{f_{\BCSL^1}(n)}$-large$(\theta)$ set is $\RFCSL^1$-$\bbomega^n$-large$(\theta)$.
As for the Graham-Rothschild theorem, the core of the argument lies in the following proposition which combines blocks of solutions to a larger solution:

\begin{proposition}\label[proposition]{prop:BCSL1-blocks}
    Let $n \in \NN$, let $r : \NN \to \NN$ be a non-decreasing function, and suppose that the following induction hypothesis holds:
    \begin{quote}
    There exists some $a \in \NN$ such that every $(x \mapsto x^{x^x})$-sparse $\bbomega^a$-large$(\theta)$ set $X$ is $\BlockCSL{\bbomega^n \cdot r(\min X)}$.
    \end{quote}
    Let $k, \ell \in \NN$, and $X_0 < \dots < X_{k-1}$ be $\theta$-apart and $\bbomega^{f_{\CSL^0}(f_{\GR^0}(f_{\GR^0}(f_{\GR^1}(a))))}$-large$(\theta)$ sets such that $\ell^{(2n+1)k^2} < \min X_0$. Let $X = X_0 \cup \dots \cup X_{k-1}$ and let $w = w_0 \dots  w_{k-1}$ be an $X$-variable word over an alphabet $A$ of size smaller than $\min X_0$.  
    
    Then, for every coloring $f : \USub^{1,\star}_A(w) \to \ell$, there exist $\bbomega^n\cdot r(\min X_i)$-large$(\theta)$ subsets $W_i \subseteq X_i$ and some $W_0 \cup \cdots \cup W_{k-1}$-variable word $v \in \USub^=_A(w)$ such that:
    \begin{itemize}
        \item[(1)] for every $i < k$, there exists a color $\sigma_i$, such that, for every $W_i$-variable word $u \in \USub_A^{=}(v)$, $W_i$ is $f^u$-$\bbomega^n\cdot r(\min X_i)$-block-homogeneous$(\theta)$ for $\sigma_i$, where $f^u$ is the restriction of $f$ to $\USub^{1,\star}_A(u)$.
        \item[(2)] the color of every $1$-variable word $u \in \USub_A^{1,\star}(v)$ only depends on the pair $(i_0,i_1)$ such that $\pi_0(u) \in W_{i_0}$ and $\pi_1(u) \in W_{i_1}$ when $i_0 \neq i_1$.
    \end{itemize}
\end{proposition}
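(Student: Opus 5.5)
We follow the scheme of \Cref{prop:GR1-blocks}: process the blocks from $i = k-1$ down to $i = 0$, and at block $i$ refine $w_i$ through four successive shrinkings. These shrinkings match the four nested largeness functions in the hypothesis $\bbomega^{f_{\CSL^0}(f_{\GR^0}(f_{\GR^0}(f_{\GR^1}(a))))}$, read from the outside in. As before, write the current word as $w^- \cdot w_i \cdot w^+$, where $w_i$ is a word over the extended alphabet $B = A \sqcup \{x_p : p \in X_0 \cup \dots \cup X_{i-1}\}$. The $1$-variable words $u \in \USub^{1,\star}_A(w)$ are classified by the pair $(i_0, i_1)$ with $\pi_0(u) \in X_{i_0}$ and $\pi_1(u) \in X_{i_1}$, where $\pi_1(u) = |u|$ may also lie in a later block or at the end. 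The goal at block $i$ is to make every class involving block $i$ invariant under the valuation of the variables of $X_i$.

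\textbf{Step i.a: $\CSL^0$ on the cut level.} First consider the classes $(i_0, i)$ with $i_0 < i$. A word $u$ of such a class is $u^- \cdot u'$, where $u^- \in \USub^{1,=}_A(w^-)$ and $u'$ is a word in $\USub^{0,\star}_B(w_i)$, cut at a first-occurrence position of $X_i$. Color each $u' \in \USub^{0,\star}_B(w_i)$ by the tuple $\langle f(u^- \cdot u'[x_p \mapsto u^-(p)]) : u^- \rangle$. By $(x \mapsto x^{x^x})$-sparsity and $\ell^{(2n+1)k^2} < \min X_0$, the number of colors is below $\min X_i$. Apply \Cref{thm:csl0-bound} to get an $\bbomega^{f_{\GR^0}(f_{\GR^0}(f_{\GR^1}(a)))}$-large$(\theta)$ subset on which these classes are monochromatic.

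\textbf{Steps i.b and i.c: $\GR^0$ twice.} Next, as in Steps i.a and i.b of \Cref{prop:GR1-blocks}, apply \Cref{thm:gr0-bound} twice to full substitutions $u_i \in \USub^{0,=}_B(w_i)$. The first application makes the classes $(i_0, j)$ with $i_0 < i < j$ (including the end) invariant. The second makes the classes $(j_0, j_1)$ with $i < j_0$ invariant. For both, we color by the color vector indexed over the relevant $u^-$ and over representatives of the later classes. These representatives are well defined by the invariance already obtained at blocks $j > i$.

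\textbf{Step i.d: internal homogeneity.} It remains to handle the classes $(i, j)$ with $j > i$, together with the internal class $(i, i)$. Coloring $u_i \in \USub^{1,\star}_B(w_i)$ by the tuple over $u^- \in \USub^{0,=}_A(w^-)$ handles class $(i, i)$. For class $(i, j)$, we instead color the completed word $u_i \in \USub^{1,=}_B(w_i)$. Both colorings are defined on words of $w_i$, which by \Cref{lem:csl-rfcsl-rt}-style reasoning we can merge into a single coloring on $\USub^{1,\star}_B(w_i)$ with a combined tuple of colors.

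We then apply the induction hypothesis ($\BlockCSL{\bbomega^n \cdot r(\min X)}$, available on the $\bbomega^a$-large set, after the $\GR^1$ layer) to obtain $W_i$ and a word on which the combined coloring is block-homogeneous. Crucially, the color of block-homogeneity is a sequence $\sigma$ of length at most $2n$ over the tuple alphabet. Because $\sigma$ is determined by only $(2n+1)$ positions, conclusion (1) holds uniformly in the substituted $u^-$, via a pigeonhole over at most $\ell^{(2n+1)k^2}$ possibilities.

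For the classes $(i, j)$, completed words $u$ with $\pi_0 \in W_i$ do not fit block-homogeneity directly. We therefore first apply \Cref{thm:gr1-bound} (the innermost $f_{\GR^1}$) to the coloring of $\USub^{1,=}_B(w_i)$ by the colors of classes $(i, j)$. This yields a subword on which those classes are monochromatic, and we apply the induction hypothesis inside it.

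\textbf{Main obstacle.} The delicate point is this last step. We must ensure that restricting to a subword never destroys earlier invariances, which holds since all properties are preserved under $\USub^=$. We must also ensure that the number of color tuples stays below $\min X_i$: the sparsity bound $x^{x^x}$ together with $\ell^{(2n+1)k^2} < \min X_0$ must dominate $\ell^{|\USub(w^-)|}$ times the block-color count. We expect this counting, and the order of the four applications, to require the most care.
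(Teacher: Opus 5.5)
Your architecture is the paper's. Blocks are processed from $k-1$ down to $0$, and at block $i$ you apply:
\begin{itemize}
\item $\CSL^0$ for types $(i_0,i)$ with $i_0<i$;
\item $\GR^0$ twice on $\USub^{0,=}_B(w_i)$, for types $(i_0,j)$ with $i_0<i<j$ and for types $(j_0,j_1)$ with $i<j_0<j_1$;
\item $\GR^1$ for types $(i,j)$ with $j>i$;
\item the induction hypothesis for the internal type $(i,i)$.
\end{itemize}
Your intermediate idea of merging the colorings of $\USub^{1,=}_B(w_i)$ and $\USub^{1,\star}_B(w_i)$ via the $\CSL\leftrightarrow\RFCSL\wedge\RT$ lemma does not make sense. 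You rightly replace it by $\GR^1$ followed by the induction hypothesis.

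The gap is in conclusion (1). Applying the induction hypothesis to the tuple coloring indexed by $u^-\in\USub^{0,=}_A(w^-)$ only gives, for each $u^-$ separately, a block-homogeneity color $\sigma^{u^-}$ of $W_i$. These colors may differ between different $u^-$. No pigeonhole at block $i$ can make them equal. That would require restricting the admissible valuations of $X_0\cup\dots\cup X_{i-1}$, i.e.\ passing to a subword of $w^-$, which only happens later when the earlier blocks are processed.

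Nor can your second $\GR^0$ step make the internal classes $(j,j)$, $j>i$, literally invariant. The vector of colors of all type-$(j,j)$ words takes far more than $\min X_i$ values, so $f_{\GR^0}$ does not apply. If instead you exclude $j_0=j_1$ there, nothing in your construction synchronizes $\sigma_j$ across earlier valuations.

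The missing idea is exactly why block-homogeneity was introduced. In the $\GR^0$ coloring at block $i$ (the paper's Step $i$.b), add for each $j>i$ and each $u^-\in\USub^{0,=}_A(w^-)$ the color $\sigma\in\ell^{2n+1}$ for which $W_j$ is block-homogeneous once the earlier variables are valuated by $u^-\cdot u_i$. This is well-defined by Step $j$.e together with the Steps $j'$.b for $i<j'<j$, and it contributes only $\ell^{2n+1}$ values per block.

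Running through $i=j-1,\dots,0$ then makes the block color of $W_j$ independent of every valuation of the earlier blocks, which yields a single $\sigma_j$. This is also the real role of the hypothesis $\ell^{(2n+1)k^2}<\min X_0$, not a pigeonhole. At Step $0$.b, $w^-$ is empty and this coloring takes at most $\ell^{(2n+1)(k-1)+\binom{k-1}{2}}\le\ell^{(2n+1)k^2}$ values, which must be below $\min X_0$ for $f_{\GR^0}$ to apply. For $i>0$, sparsity suffices.
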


\begin{proof}
We say that some 1-variable word~$u \in \USub^{1,\star}_A(w)$ is of \emph{type~$(i_0,i_1)$} (for some $i_0 \leq i_1$) if $\pi_0(u) \in X_{i_0}$ and $\pi_1(u) \in X_{i_1}$. Given a set~$Y \subseteq X$, the 1-variable words of type~$(i_0,i_1)$ are \emph{$Y$-invariant} if their color do not depend on the valuations of the variables $\{x_n : n \in Y\}$.
\smallskip

\noindent
\textbf{Proof outline:}
The construction will follow $5k$ steps, in the following order:
\begin{center}
Step $(k-1)$.a, Step $(k-1)$.b, Step $(k-1)$.c, Step $(k-1)$.d, Step $(k-1)$.e, $\dots$, Step 0.a, Step 0.b, Step 0.c, Step 0.d, Step 0.e.
\end{center}
These steps will ensure that the $1$-variable words $u \in \USub_A^{1,\star}(v)$ of any type $(i_0,i_1)$ are $W_i$-invariant for every $i < k$, except in the case where $i_0 = i_1 = i$, where we will find some color $\sigma_i$ and ensure the block-homogeneity of $W_i$ for that color.
\smallskip

\noindent
Let $i <k$, and assume $W_j \subseteq X_j$ have been defined for $j \in \{i+1, \dots, k-1\}$.
\begin{itemize}
    \item In Step i.a, we are going to find some subset $Y_{i,a} \subseteq X_i$ and some $X_0 \cup \dots \cup X_{i-1} \cup Y_{i,a} \cup W_{i+1} \cup \dots \cup W_{k-1}$-variable word $w^{i,a} \in \USub_A^=(w^{i+1,e})$ (or $w^{i,a} \in \USub_A^=(w)$ in the case where $i = k-1$) such that the $1$-variable words of type $(i_0,i)$ with $i_0 < i$ are $Y_{i,a}$-invariant.
    \item In Step i.b, we are going to find some subset $Y_{i,b} \subseteq Y_{i,a}$ and some $X_0 \cup \dots \cup X_{i-1} \cup Y_{i,b} \cup W_{i+1} \cup \dots \cup W_{k-1}$-variable word $w^{i,b} \in \USub_A^=(w^{i,a})$ such that the $1$-variable words of type $(i_0,i_1)$ with $i_0,i_1 > i$ are $Y_{i,b}$-invariant.
    \item In Step i.c, we are going to find some subset $Y_{i,c} \subseteq Y_{i,b}$ and some $X_0 \cup \dots \cup X_{i-1} \cup Y_{i,c} \cup W_{i+1} \cup \dots \cup W_{k-1}$-variable word $w^{i,c} \in \USub_A^=(w^{i,b})$ such that the $1$-variable words of type $(i_0,i_1)$ with $i_0 < i < i_1$ are $Y_{i,c}$-invariant.
    \item In Step i.d, we are going to find some subset $Y_{i,d} \subseteq Y_{i,c}$ and some $X_0 \cup \dots \cup X_{i-1} \cup Y_{i,d} \cup W_{i+1} \cup \dots \cup W_{k-1}$-variable word $w^{i,d} \in \USub_A^=(w^{i,c})$ such that the $1$-variable words of type $(i,i_1)$ with $i < i_1$ are $Y_{i,d}$-invariant.
    \item In Step i.e, we are going to find some subset $W_i \subseteq Y_{i,d}$ and some $X_0 \cup \dots \cup X_{i-1} \cup W_i \cup W_{i+1} \cup \dots \cup W_{k-1}$-variable word $w^{i,e} \in \USub_A^=(w^{i,d})$ such that, for every $W_i$-variable word $u \in \USub_A^{=}(w^{i,e})$, $W_i$ is $f^u$-$\bbomega^n\cdot r(\min X_i)$-block-homogeneous$(\theta)$ for some $\sigma_i \in \ell^{2n+1}$, where $f^u$ is the restriction of $f$ to $\USub^{1,\star}_A(u)$.

\end{itemize}  

Notice that we already have that the $1$-variable words of type $(i_0,i_1)$ are $X_i$ invariant when $i_0,i_1 < i$ as the $1$-variable words of type~$(i_0,i_1)$ are of length $< \min X_i$.
\smallskip

\noindent
\textbf{Formal proof:} Consider the extended alphabets 
\begin{align*}
    B &:= A \sqcup \{x_p : p \in X_0 \cup \dots \cup X_{i-1}\}\\
    C &:= A \sqcup \{x_p : p \in X_0 \cup \dots \cup X_{i-1} \cup X_i\}
\end{align*}
At each step of the construction, we will be defining a coloring as follows:
\smallskip

\noindent
\textbf{Step i.a}. 
Assume $w^{i+1,e} = w^- \cdot w_i \cdot w^+$ has already been defined, where $w^-$ is an $X_0 \cup \cdots \cup X_{i-1}$-variable word over~$A$, $w_i$ is an $X_i$-variable word over~$B$ and $w^+$ is an $W_{i+1} \cup \dots \cup W_{k-1}$-variable word over~$C$. By convention,  $w^{i+1,e} = w$ if $i = k-1$. By construction, we have $w^- = w_0 \cdots w_{i-1}$.

For every $u^- \in \USub_A^{1,=}(w^-)$, let $$g_{i.a}^{u^-} : \USub_{B}^{0,\star}(w_i) \to \ell$$ be defined by
$$g_{i.a}^{u^-}(u_i) = f(u^- \cdot u_i[x_p \mapsto u^-(p)])$$

Then, let $$g_{i.a} : \USub_{B}^{0,\star}(w_i) \to \ell^{|\USub_A^{1,=}(w^-)|}$$ be defined by $$g_{i.a}(u_i) = \langle g_{i.a}^{u^-}(u_i) : u^- \in \USub_A^{1,=}(w^-)\rangle$$
\smallskip

\noindent
\textbf{Step i.b}. Assume $w^{i,a} = w^- \cdot w^{i,a}_i \cdot w^+$ have already been defined. By construction, we have $w^- = w_0 \cdots w_{i-1}$.

For every $u^- \in \USub_A^{0,=}(w^-)$ and $i_0,i_1 \in \NN$ with $i < i_0 < i_1 < k$, let $$g_{i.b}^{u^-,i_0,i_1} : \USub_{B}^{0,=}(w^{i,a}_i) \to \ell$$ be defined by $$g_{i.b}^{u^-, i_0, i_1}(u_i) = f(u^- \cdot u_i[x_p \mapsto u^-(p)] \cdot u^+[x_p \mapsto (u^- \cdot u_i)(p)])$$ for any $u^+ \in \USub_C^{1,\star}(w^+)$ of type $(i_0, i_1)$. By Steps $j$.a, $j$.b, $j$.c and $j$.d for $j > i$, the coloring $g_{i.b}^{u^-, i_0, i_1}$ is well-defined.

For every $u^- \in \USub_A^{0,=}(w^-)$ and $i_0 \in \NN$ with $i < i_0 < k$, let $$g_{i.b}^{u^-,i_0} : \USub_{B}^{0,=}(w^{i,a}_i) \to \ell^{2n+1}$$ be defined by $g_{i.b}^{u^-, i_0}(u_i) = \sigma$
where $f^{u^-, u_i}$ is the restriction of $f$ to $\USub^{1,\star}_A(u^- \cdot u_i[x_p \mapsto u^-(p)])$ and $\sigma$ is the color of $f^{u^-,u_i}$-$\bbomega^n \cdot r(\min X_{i_0})$-block-homogeneity$(\theta)$ for $W_{i_0}$.
By Steps $j$.e for $j > i$, the coloring $g_{i.b}^{u^-, i_0}$ is well-defined.

Finally, let 
$$g_{i.b} : \USub_{B}^{0,=}(w^{i,a}_i) \to \ell^{|\USub_A^{0,=}(w^-)|\times \left((2n+1)(k-i-1) + \frac{(k-i-1)(k-i-2)}{2}\right)}$$
be defined by $$g_{i.b}(u_i) = \left\langle \begin{array}{ll}
    g_{i.b}^{u^-, i_0, i_1}(u_i) &: i_0 < i_1 \in \{i+1, \dots, k-1\} \textit{ and } u^- \in \USub_A^{0,=}(w^-)\\
    g_{i.b}^{u^-, i_0}(u_i) &: i_0 \in \{i+1, \dots, k-1\} \text{ and } u^- \in \USub_A^{0,=}(w^-)\\
    \end{array}\right\rangle$$


\smallskip

\noindent
\textbf{Step i.c}. Assume $w^{i,b} = w^- \cdot w^{i,b}_i \cdot w^+$ have already been defined. By construction, we have $w^- = w_0 \cdots w_{i-1}$.

For every $u^- \in \USub_A^{1,=}(w^-)$ and $i_1 \in \{i+1, \dots, k-1\}$, let $$g_{i.c}^{u^-,i_1} : \USub_{B}^{0,=}(w^{i,b}_i) \to \ell$$ be defined by
$$g_{i.c}^{u^-,i_1}(u_i) = f(u^- \cdot u_i[x_p \mapsto u^-(p)] \cdot u^+[x_p \mapsto (u^- \cdot u_i)(p)])$$ for any $u^+ \in \USub_C^{0,\star}(w^+)$ with $|u^+| \in W_{i_1}$. Recall that we consider $u^+$ as a located variable word, in which case $|u^+|$ denotes the least upper bound of its domain. By Steps j.a and j.c for $j > i$, the coloring $g_{i.c}^{u^-,i_1}$ is well-defined.

Finally, let $$g_{i.c} : \USub_{B}^{0,=}(w^{i,b}_i) \to \ell^{|\USub_A^{1,=}(w^-)|\times (k-i-1)}$$ be defined by $$g_{i.c}(u_i) = \langle g_{i.c}^{u^-,i_1}(u_i) : i_1 \in \{i+1, \dots, k-1\} \textit{ and } u^- \in \USub_A^{1,=}(w^-) \rangle$$
\smallskip

\noindent
\textbf{Step i.d}. Assume $w^{i,c} = w^- \cdot w^{i,c}_i \cdot w^+$ have already been defined. By construction, we have $w^- = w_0 \cdots w_{i-1}$.

For every $u^- \in \USub_A^{0,=}(w^-)$ and $i_1 \in \{i+1, \dots, k-1\}$, let $$g_{i.d}^{u^-,i_1} : \USub_B^{1,=}(w^{i,c}_i) \to \ell$$ be defined by  
$$g_{i.d}^{u^-,i_1}(u_i) = f(u^- \cdot u_i[x_p \mapsto u^-(p)] \cdot u^+[x_p \mapsto (u^- \cdot u_i)(p)])$$ for any $u^+ \in \USub_{C}^{0,\star}$ with $|u^+| \in W_{i_1}$. By Steps j.a and j.c for $j > i$, the coloring $g_{i.d}^{u^-,i_1}$ is well-defined.

Finally, let $$g_{i.d} : \USub_B^{1,=}(w^{i,c}_i) \to \ell^{|\USub_A^{0,=}(w^-)|\times (k-i-1)}$$ be defined by $$g_{i.d}(u_i) = \langle g_{i.d}^{u^-,i_1}(u_i) : i_1 \in \{i+1, \dots, k-1\} \textit{ and } u^- \in \USub_A^{0,=}(w^-) \rangle$$
\smallskip

\noindent
\textbf{Step i.e}. Assume $w^{i,d} = w^- \cdot w^{i,d}_i \cdot w^+$ have already been defined. By construction, we have $w^- = w_0 \cdots w_{i-1}$. 
For every $u^- \in \USub_A^{0,=}(w^-)$, let $$g_{i.e}^{u^-} : \USub_B^{1,\star}(w^{i,d}_i) \to \ell$$ be defined by 
$$g_{i.e}^{u^-}(u_i) = f(u^- \cdot u_i[x_p \mapsto u^-(p)])$$

Then, let $$g_{i.e} : \USub_B^{1,\star}(w^{i,d}_i) \to \ell^{|\USub_A^{0,=}(w^-)|}$$ be defined by $$g_{i.e}(u_i) = \langle g_{i.e}^{u^-}(u_i) : u^- \in \USub_A^{0,=}(w^-)\rangle$$

\smallskip
In Steps i.a through i.e, we have considered successively: an instance of $\CSL^0$, an instance of $\GR^0$, an instance of $\GR^0$, an instance of $\GR^1$ and an instance of $\CSL^1$. As in the proof of \Cref{prop:GR1-blocks}, we can use the bounds $f_{\GR^0}, f_{\GR^1}$ and $f_{\CSL^0}$ and the definition of $a$, to obtain that it is sufficient for $X_i$ to be $(x \mapsto x^{x^x})$-sparse and $\bbomega^{f_{\CSL^0}(f_{\GR^0}(f_{\GR^0}(f_{\GR^1}(a))))}$-large$(\theta)$ in order to find a sequence of subsets $W_i \subseteq Y_{i,d} \subseteq Y_{i,c} \subseteq Y_{i,b} \subseteq Y_{i,a} \subseteq X_i$, some $X_0 \cup \dots X_{i-1} \cup W_i \cup W_{i+1} \cup \dots \cup W_{k-1}$-variable word $w^{i,e} \in \USub_A^{=}(w)$, and some color $\sigma_i$, so that:

\begin{itemize}
    \item for every $W_i$-variable word $u \in \USub_A^{=}(w^{i,e})$, $W_i$ is $f^u$-$\bbomega^n\cdot r(\min X_i)$-block-homogeneous$(\theta)$ for $\sigma_i$, where $f^u$ is the restriction of $f$ to $\USub^{1,\star}_A(u)$.
    
    \item the $1$-variable word $u \in \USub_A^{1,\star}(w^{i,e})$ of type $(i_0,i_1)$ are $W_i$-invariant when $(i_0,i_1) \neq (i,i)$.
\end{itemize}

The assumption that $\ell^{(2n+1)k^2} < \min X_0$ is used in Step 0.b to ensure that $$\ell^{\left((2n+1)(k-1) + \frac{(k-1)(k-2)}{2}\right)} \leq \ell^{(2n+1)k^2} < \min X_0$$ so that the bound $f_{\GR^0}$ can be used.
Thus, once all the steps have been done, the set $W_0 \cup \dots \cup W_{k-1}$ and the resulting $W_0 \cup \dots \cup W_{k-1}$-variable word $w^{0,e}$ satisfy the desired properties.
\end{proof}

In what follows, for $k, \ell \in \NN$, let $R_{\ell}(k) \in \NN$ denote the smallest bound $b \in \NN$ such that every coloring $f : [b]^2 \to \ell$ admits a subset $H \subseteq b$ of cardinality $k$ satisfying $f([H]^2) = 1$. 
The standard upper bound for the finite Ramsey's theorem for pairs and two colors, obtained by Erd\H{o}s and Szekeres in \cite{erdos1935combinatorial}, states that $$R_2(d) \leq \binom{2d-1}{d-1} \leq 4^{d}.$$ This upper bound generalizes to $R_k(d) \leq k^{kd}$ (see Graham, Rothschild and Spencer~\cite[Section 1.1]{graham2013ramsey}).

\begin{reptheorem}{thm:block-homogeneous-bound}[$\RCA_0$]
    There exists a primitive recursive function $f_{\BCSL^1} : \NN \to \NN$ such that for every~$n \in \NN$, every $\bbomega^{f_{\BCSL^1}(n)}$-large$(\theta)$ set is $\BlockCSL{\bbomega^n}$.
\end{reptheorem}

\begin{proof}
Consider the sequences $(b_n)_{n \in \NN}$ and $(c_n)_{n \in \NN}$ defined by mutual induction as follows: 

$$
\left\{
\begin{aligned}
b_0 &= 0, \\
c_n &= f_{\CSL^0}(f_{\GR^0}(f_{\GR^0}(f_{\GR^1}(b_n)))) + 2, \\
b_{n+1} &= f_{\CSL^0}(f_{\GR^0}(f_{\GR^0}(f_{\GR^1}(c_n)))) + 2
\end{aligned}
\right.
$$
Then, let $f_{\BCSL^1}(n) = 2b_n + n_0$ with $n_0$ given by \Cref{lem:largeness-to-sparsity} is such that every $\bbomega^{f_{\BCSL^1}(n)}$-large$(\theta)$ set contains a $(x \mapsto x^{x^x})$-sparse $\bbomega^{b_n}$-large$(\theta)$ subset. \\

We prove by mutual induction on the definition of $(b_n)$ and $(c_n)$ that 
\begin{itemize}
    \item[(1)] every $(x \mapsto x^{x^x})$-sparse $\bbomega^{b_n}$-large$(\theta)$ set $X$ is $\BCSL^1$-$\bbomega^n$-large$(\theta)$; and that
    \item[(2)] every $(x \mapsto x^{x^x})$-sparse $\bbomega^{c_n}$-large$(\theta)$ set $X$ is $\BCSL^1$-$\bbomega^n\cdot \min X$-large$(\theta)$.
\end{itemize}

Every $\bbomega^{b_0}$-large$(\theta)$ set $X$ is $\BCSL^1$-$\bbomega^0$-large$(\theta)$, hence we have the base case. \\

Assume that every $(x \mapsto x^{x^x})$-sparse $\bbomega^{b_n}$-large$(\theta)$ set is $\BCSL^1$-$\bbomega^n$-large$(\theta)$ and let $X$ be $\bbomega^{c_{n}}$-large$(\theta)$ and $(x \mapsto x^{x^x})$-sparse. Let $w$ be an $X$-variable word over an alphabet $A$ with $|A| < \min X$ and let $f : \USub_A^{1,\star}(w) \to \ell$ be a coloring for some $\ell < \min X$. 

Let $a = \ell^{2n} \times R_{\ell}(\min X)$, we therefore have $a \leq \ell^{2n} \times \ell^{\ell \times \min X}$. As $X$ is $\bbomega^{c_{n}}$-large$(\theta)$, there exists some $\bbomega^{f_{\CSL^0}(f_{\GR^0}(f_{\GR^0}(f_{\GR^1}(b_n)))) + 1}$-large$(\theta)$ subsets $X' < X''$ of $X$. Then, by $(x \mapsto x^{x^x})$-sparsity of $X$ and the fact that $f_{\CSL^0}(f_{\GR^0}(f_{\GR^0}(f_{\GR^1}(b_n)))) + 1 > n$, we get that $\min X'' \geq a$, hence that there exist $X_0 < \dots < X_{a - 1}$ pairwise $\theta$-apart $\bbomega^{f_{\CSL^0}(f_{\GR^0}(f_{\GR^0}(f_{\GR^1}(b_n))))}$-large$(\theta)$ subsets of $X''$. 

We also have $\min X'' >  \ell^{(2n+1)a^2}$, hence, by \Cref{prop:BCSL1-blocks}, there exist $\bbomega^{n}$-large$(\theta)$ subsets $Y_i \subseteq X_i$ and some $Y_0 \cup \dots \cup Y_{a - 1}$-variable word $v \in \USub^=_A(w)$ such that: 

\begin{itemize}
    \item[(1)] for every $i < a$, there exists a color $\sigma_i$, such that, for every $W_i$-variable word $u \in \USub_A^{=}(v)$, $W_i$ is $f^u$-$\bbomega^n$-block-homogeneous$(\theta)$ for $\sigma_i$, where $f^u$ is the restriction of $f$ to $\USub^{1,\star}_A(u)$.
    \item[(2)] the color of every $1$-variable word $u \in \USub_A^{1,\star}(v)$ only depends on the pair $(i_0,i_1)$ such that $\pi_0(u) \in W_{i_0}$ and $\pi_1(u) \in W_{i_1}$ when $i_0 \neq i_1$.
\end{itemize}




By the finite pigeonhole principle, there exists some $\sigma \in \ell^{2n}$ for which $\sigma_i = \sigma$ for at least $R_{\ell}(\min X)$ values of $i < a$. Let $V_{0} < \dots < V_{R_{\ell}(\min X) - 1}$ be the corresponding blocks and pick some $V_{0} \cup \dots \cup V_{R_{\ell}(\min X) - 1}$-variable word $v' \in \USub^=_A(v)$. By our application of \Cref{prop:BCSL1-blocks}, the color of any $u \in \USub_A^{1,\star}(v')$ only depends on the pair $(i_0, i_1)$ such that $\pi_0(u) \in V_{i_0}$ and $\pi_0(u) \in V_{i_1}$ when $i_0 \neq i_1$. Hence, by definition of $R_{\ell}(\min X)$, there exists $\min X$ blocks $K_0 < \dots < K_{\min X - 1}$ among $V_{0}, \dots, V_{R_{\ell}(\min X) - 1}$ and some $c < \ell$ such that $f(u) = c$ for any $u \in \USub_A^{1,\star}(v')$ for which $\pi_0(u) \in K_i$ and $\pi_0(u) \in K_j$ for some $i < j$. Thus, the set $K_0 \cup \dots \cup K_{\min X - 1}$ is $f$-$\bbomega^n\cdot \min X$-block-homogeneous$(\theta)$ for color $\sigma \cdot c$. \\

Assume that every $(x \mapsto x^{x^x})$-sparse $\bbomega^{c_n}$-large$(\theta)$ set is $\BCSL^1$-$\bbomega^n\cdot \min X$-large$(\theta)$ and let $X$ be $\bbomega^{b_{n+1}}$-large$(\theta)$ and $(x \mapsto x^{x^x})$-sparse. Let $w$ be an $X$-variable word over an alphabet $A$ with $|A| < \min X$ and let $f : \USub_A^{1,\star}(w) \to \ell$ be a coloring for some $\ell < \min X$. 

As $X$ is $\bbomega^{b_{n+1}}$-large$(\theta)$ and $(x \mapsto x^{x^x})$-sparse, there exist $X_0 < \dots < X_{\ell^{2n+1}}$ pairwise $\theta$-apart $\bbomega^{f_{\CSL^0}(f_{\GR^0}(f_{\GR^0}(f_{\GR^1}(b_n))))}$-large$(\theta)$ subsets of $X$ by a reasoning similar to the previous case. Moreover, $\ell^{(2n+1)\ell^{4n+2}} < \min X_0$, so by \Cref{prop:BCSL1-blocks}, there exist $\bbomega^{n}\cdot (\min X_i)$-large$(\theta)$ subsets $Y_i \subseteq X_i$ and some $Y_0 \cup \dots \cup Y_{\ell^{2n+1}}$-variable word $v \in \USub^=_A(w)$ such that:

\begin{itemize}
    \item[(1)] for every $i \leq \ell^{2n+1}$, there exists a color $\sigma_i$, such that, for every $W_i$-variable word $u \in \USub_A^{=}(v)$, $W_i$ is $f^u$-$\bbomega^n\cdot \min X_i$-block-homogeneous$(\theta)$ for $\sigma_i$, where $f^u$ is the restriction of $f$ to $\USub^{1,\star}_A(u)$.
    \item[(2)] the color of every $1$-variable word $u \in \USub_A^{1,\star}(v)$ only depends on the pair $(i_0,i_1)$ such that $\pi_0(u) \in W_{i_0}$ and $\pi_1(u) \in W_{i_1}$ when $i_0 \neq i_1$.
\end{itemize}

By the finite pigeonhole principle, there exists some indices $i_0 < i_1 \leq \ell^{2n+1}$ such that $\sigma_{i_0} = \sigma_{i_1}$. Let $x \in Z_{i_0}$, then, as $x \leq \max Z_{i_0} < \min Y_{i_1}$, the set $\{x\} \cup Z_{i_1}$ is $f$-$\bbomega^{n+1}$-block-homogeneous$(\theta)$ for color $\sigma_{i_0} \cdot c$ with $c = f(u)$ for any $u \in \USub_A^{1,\star}(v)$ such that $\pi_0(u) \in Y_{i_0}$ and $\pi_0(u) \in Y_{i_1}$.
\end{proof}

\section{Conservation theorems}\label[section]{sect:conservation}

We now apply the previous closure theorems under $\alpha$-largeness$(\theta)$ to deduce $\forall \Pi^0_4$-conservation of~$\CSL^1_2$ over~$\RCA_0 + \BSig_2$. Such partial conservation results are proven as follows: given a $\forall \Pi^0_4$-sentence~$\varphi$ and a countable model~$\M \models \RCA_0 + \BSig_2 + \neg \varphi$, we will construct another model $\mathcal{N} \models \RCA_0 + \CSL^1_2 + \neg \varphi$. The first-order part of our model~$\mathcal{N}$ will actually be an initial segment of the first-order part of~$\mathcal{M}$.

\begin{definition}[Cut]
    A \emph{cut}\index{cut} in a model $M$ of first-order arithmetic is a nonempty subset $I \subseteq M$ which is closed under successor (if $a \in I$, then $a+1 \in I$), and is an initial segment of~$M$ (if $a \in I$ and $b \leq a$, then $b \in I$).
    A cut $I \subseteq M$ is \emph{proper} if $I \neq M$.
\end{definition}

The model~$\mathcal{N}$ that we shall construct will be fully characterized the cut forming its first-order part. Indeed, its second-order part of~$\mathcal{N}$ will the collection of all its coded sets:

\begin{definition}[Coded sets]\index{set!$M$-coded}
    For $I \subseteq M$ a proper cut, a set $X \subseteq I$ is said to be \emph{$M$-coded} if $X = I \cap \hat X$ for some $M$-finite set $\hat X$. Let $\Cod(M/I)$ be the set of all $M$-coded sets.
\end{definition}

There exists a neat characterization of the cuts~$I \subseteq_e M$ such that $\Cod(M,/I)$ forms a model of~$\WKL_0$. These cuts are called \emph{semi-regular}, a notion related to regularity in set theory:

\begin{definition}[Semi-regular cut \cite{kirby1977initial}]
    A proper cut $I \subseteq M$ is said to be \emph{semi-regular} if for every $M$-finite set $E \subseteq M$ such that $|E| \in I$, $E \cap I$ is bounded in $I$ (i.e., there exists $a \in I$ such that $E \cap I < a$).
\end{definition}

\begin{proposition}[{Scott \cite{scott1962algebras} and Kirby and Paris \cite[Proposition 1]{kirby1977initial}. See also Simpson \cite[Lemma IX.3.11]{simpson2009subsystems}}]\label[proposition]{prop:semi-regular-wkl0}
    Let $M \models \PRA$ and let $I \subseteq M$ be a proper cut. Then, $I$ is semi-regular if and only if $(I, \Cod(M/I)) \models \WKL_0$.   
\end{proposition}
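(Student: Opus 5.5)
This is the classical characterization of semi-regular cuts, and I would prove the two directions separately. Throughout, $I \subseteq M$ is a proper cut, $M \models \PRA$, and $\Cod(M/I)$ is the second-order part.

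\textbf{Easy direction.} I would first check that $(I, \Cod(M/I)) \models \Delta^0_1$-comprehension for any proper cut. Given a $\Delta^0_1$ definition over parameters coded by $\hat X_1,\dots$, pick $b \in M \setminus I$. Inside $M$, form the $M$-finite set of those $x<b$ satisfying the $\Sigma^0_1$ form of the definition with witnesses bounded by $b$. Its trace on $I$ is the required set. To see this, note that for $x \in I$ the true witness lies in $I$ and is hence below $b$. Conversely, a spurious witness below $b$ for the $\Sigma^0_1$ form can be checked against the $\Pi^0_1$ form using $\Delta_0$ facts in $M$. The cleanest way to secure this is to take $b$ small enough, using overspill on $\Delta_0$ statements true for all elements of $I$.

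\textbf{Induction (main obstacle).} The genuinely new content, which is where I expect the main work to lie, is $\Sigma^0_1$-induction. I would prove it from semi-regularity as follows. Suppose $\exists y\,\psi(x,y)$ holds for $x=0$ and is preserved, but fails at some $a \in I$. Consider in $M$ the $M$-finite set $E$ of least witnesses $y_x$ for $x<a$, computed below some $b \notin I$. Then $|E| \le a \in I$, so by semi-regularity $E \cap I$ is bounded by some $c \in I$. Replacing the unbounded quantifier by $\exists y<c$ turns the statement into a $\Delta_0$ induction in $M$, which is valid, and this yields the contradiction. The delicate point is arranging the witnesses as a single $M$-finite set. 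For that I would use a coded parameter $\hat X$ and the least-witness function, which is available in $M$ since $M \models \PRA$ and so $\Delta_0$ collection holds there.

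\textbf{WKL.} For Weak K\"onig's lemma, given a coded infinite binary tree $T = I \cap \hat T$, I would apply overspill to the $\Delta_0$ property ``$\hat T$ has a node of length $x$ extendible within $\hat T$ to a tree-like set.'' This gives a node $\sigma \in \hat T$ of nonstandard length beyond $I$ whose initial segments all lie in $\hat T$. Then $\sigma$ itself, viewed as $\{n : \sigma(n)=1\}$ with its trace on $I$, codes a path.

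\textbf{Converse.} If $I$ is not semi-regular, fix an $M$-finite $E$ with $|E| = a \in I$ and $E \cap I$ cofinal in $I$. The coded set $E \cap I$, enumerated increasingly, defines in $(I,\Cod)$ a function on an initial segment of $a$ that is unbounded in $I$. This violates $\BSig_1$, or equivalently $\Sigma^0_1$-induction, via the statement ``$\forall j < a$, the $j$th element of $E$ exists''. Hence $\RCA_0$, and a fortiori $\WKL_0$, fails in $(I,\Cod(M/I))$.
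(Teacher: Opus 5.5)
The paper gives no proof of this proposition; it quotes it from Scott, Kirby--Paris and Simpson (Lemma IX.3.11). Your outline is essentially the standard argument in those sources: overspill for $\Delta^0_1$-comprehension and for WKL (neither uses semi-regularity), semi-regularity to bound the $M$-finite set of least witnesses for $\Sigma^0_1$-induction, and a cofinal $M$-finite set of size in $I$ refuting $\Sigma^0_1$-induction for the converse. The one step you skip is showing that a semi-regular cut is closed under $+$, $\times$ and $x\mapsto 2^x$ (apply semi-regularity to $\{a+i : i\le a\}$, $\{a\cdot i : i\le a\}$ and $\{2^i : i\le a\}$). You need this for $(I,\Cod(M/I))$ to be a structure satisfying the basic axioms, and for codes of strings of length in $I$ to lie in $I$ in your WKL argument.
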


We now turn to the actual proof of $\forall \Pi^0_4$-conservation, from which all our results will follow:

\begin{theorem}\label[theorem]{thm:rfcsl1-pi04-conservation}
$\WKL_0 + \RT^2_2 + \RFCSL^1$ is $\forall \Pi^0_4$-conservative over $\RCA_0 + \BSig_2$.
\end{theorem}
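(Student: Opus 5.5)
We follow the indicator method in the form used in \cite{houerou2023conservation,houerou2024pi}. Fix a $\forall\Pi^0_4$-sentence $\forall X\varphi(X)$ with $\varphi(X) \equiv \forall x\exists y\forall z\,\theta'(X,x,y,z)$, where $\theta'$ is $\Pi^0_1$, and suppose $\RCA_0+\BSig_2 \nvdash \forall X \varphi(X)$. Take a countable model $\M=(M,S)\models\RCA_0+\BSig_2+\neg\varphi(A)$ for some $A\in S$. First rewrite $\neg\varphi(A)$: there is $x_0$ with $\forall y\exists z\neg\theta'(A,x_0,y,z)$. Then absorb $A$ and $x_0$ into a $\Delta^0_0$-formula $\theta(x,y,z)$ (with parameter $A$ coded) such that $\M\models\forall x\exists y\forall z\,\theta(x,y,z)$, while any cut $I$ for which $\theta$-apartness holds along a cofinal sequence witnesses $\neg\varphi(A)$ in the cut. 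The standard normal form makes "$\forall x\exists y\forall z\,\theta$" express that a $\Sigma^0_2$-function (the one giving the least witness that $\exists z\neg\theta'$) is totally defined, i.e.\ $\theta(x,y,z)$ says that below $y$ one finds witnesses for all $x'<x$ and $z$-bounded search certifies this. By \Cref{prop:largeness-bsig2-largeness}, each $\bbomega^n$-largeness$(\theta)$ for standard $n$ is a largeness notion in $\M$. Hence by overspill there is a nonstandard $n^*$ and an $M$-finite set $X$ above $x_0$ which is $\bbomega^{n^*}$-large$(\theta)$. Since $f_{\RT^2_2}$ and $f_{\BCSL^1}$ are primitive recursive (\Cref{thm:rt22-bound}, \Cref{thm:block-homogeneous-bound}), we may take $X$ with largeness parameter of the form needed to iterate both closure properties nonstandardly many times.

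Next we build, by a countable sequence of steps indexed by an enumeration of all potential instances, a decreasing sequence of $M$-finite sets $X=X_0\supseteq X_1\supseteq\cdots$ with $X_s$ being $\bbomega^{n_s}$-large$(\theta)$ and $n_s$ nonstandard, $n_{s+1}\ge$ roughly $f^{-1}(n_s)$, together with a cut $I=\sup_s \min X_s$. The steps interleave four tasks. \emph{Semi-regularity}: given an $M$-finite $E$ with $|E|<\min X_s$, use the splitting into $\min X_s$ many $\theta$-apart large pieces and pigeonhole to find a piece avoiding $E$ between its min and max, and shrink to it; this makes $I$ semi-regular, so $\Cod(M/I)\models\WKL_0$ by \Cref{prop:semi-regular-wkl0}. \emph{$\RT^2_2$ instances}: given an $M$-finite coloring $f:[X_s]^2\to2$ (every coded instance extends to one), apply \Cref{thm:rt22-bound} to get a homogeneous $\bbomega^{n}$-large$(\theta)$ subset. \emph{$\RFCSL^1$ instances}: a coded instance is an $M$-finite coloring of $1$-variable words over an alphabet of size $k\in I$ with $\ell\in I$ colors; once $\min X_s>k,\ell$, fix an $X_s$-variable word $w_s$ (refining the previous one) and apply \Cref{thm:block-homogeneous-bound} to obtain $Y\subseteq X_s$ and $v\in\USub_A(w_s)$ with $Y$ $f^v$-$\bbomega^{n}$-block-homogeneous$(\theta)$; then $X_{s+1}=Y$, $w_{s+1}=v$. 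Block-homogeneity implies that for words $u$ with $\pi(u)\subseteq Y$ the color depends only on $\pi(u)$, which is exactly the level condition. The final $\omega$-variable word is the limit restricted to $I$, and it is coded because each $w_s$ is $M$-finite and agrees below $\min X_{s+1}$. \emph{Cofinality of $\theta$}: at alternate steps pass to the second of two $\theta$-apart large pieces, so the minima go up and $\theta$-apartness yields $\forall x\in I\,\exists y\in I\,\forall z\in I\,\theta$, giving $\neg\varphi(A)$ in $(I,\Cod(M/I))$.

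Finally we verify the conclusion. $(I,\Cod(M/I))$ satisfies $\WKL_0$ by semi-regularity, satisfies $\RT^2_2$ and $\RFCSL^1$ because every coded instance was handled at some stage and the solutions $Y\cap I$ (resp.\ the limit word) are coded, infinite in $I$ since $I=\sup\min X_s$ and each $X_s$ meets $I$ cofinally. It also satisfies $\neg\varphi(A)$ by the $\theta$-property. The main obstacle is bookkeeping the nonstandard exponents: each step costs applying a primitive recursive $f$, and we need infinitely many steps while staying nonstandard. The plan is to choose $n^*$ so that $n^*$ bounds an $\omega$-sequence of descents, e.g.\ take $n_s$ with $f^{(s)}(n_{s})\le n^*$ where $f$ dominates $f_{\RT^2_2},f_{\BCSL^1}$ and the splitting overhead. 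Overspill in $\M$ applied to "there is an $\bbomega^{f^{(m)}(m)}$-large$(\theta)$ set above $x_0$", valid for standard $m$ by \Cref{prop:largeness-bsig2-largeness}, gives this. Then at stage $s$ we use $f^{(m-s)}(m)$, which remains nonstandard.

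Note that the semi-regularity step, as sketched, only gives a gap for a fixed $E$ and must be combined with sparsity via \Cref{lem:largeness-to-sparsity}. I expect this to work as in \cite{houerou2024pi}.
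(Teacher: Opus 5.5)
Your overall architecture matches the paper's: a countable nonstandard model of $\RCA_0+\BSig_2$ satisfying $\forall x\exists y\forall z\,\theta$, an $\bbomega^{d}$-large$(\theta)$ set with $d$ nonstandard, and a decreasing sequence $X_0\supseteq X_1\supseteq\cdots$ interleaving semi-regularity, $\RT^2_2$, $\RFCSL^1$ and $\theta$-apartness steps, with $I=\sup_s\min X_s$. The gap is in how you produce the $\RFCSL^1$ solution: you take a single word, the limit of the refining sequence $(w_s)$ restricted to $I$, and this fails.

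Each $w_t$ is an $X_t$-variable word, so no variable occurs in it below $\min X_t$. In particular, $w_{s+1}$ does not agree with $w_s$ below $\min X_{s+1}$ once the minimum goes up: at position $\min X_s$, $w_s$ has its first variable while $w_{s+1}$ has a letter. What does stabilize is $w_t(p)$ for $p<\min X_t$, and those values are letters. Every $p\in I$ lies below some $\min X_t$, so the limit restricted to $I$ contains no variable at all and is not an $\omega$-variable word. Separately, ``each $w_s$ is $M$-finite'' does not put an externally built limit into $\Cod(M/I)$. A subset of $I$ whose bounded pieces are all $M$-finite need not be of the form $\hat W\cap I$ with $\hat W$ $M$-finite; there are uncountably many such sets but only countably many coded ones.

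The fix is what the paper does in its condition (5). $\RFCSL^1$ asks for one solution per instance, so no single word is needed. For the instance handled at stage $s$, take $w_{s+1}\uh I$; any $X_s$-variable word can serve as $w_s$, with no refinement across stages. This word works for three reasons:
\begin{itemize}
\item It is coded, because $w_{s+1}$ is $M$-finite.
\item The first occurrences of its variables form $X_{s+1}\cap I$, which is unbounded in $I$, since $\min X_t\in X_{s+1}\cap I$ for all $t>s$.
\item Every element of $\USub^{1,\star}_A(w_{s+1}\uh I)$ lies in $\USub^{1,\star}_A(w_{s+1})$, so the level property given by block-homogeneity transfers.
\end{itemize}

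Three smaller points. First, with $\theta'$ taken $\Pi^0_1$ your $\varphi$ is only $\Pi^0_3$; you need $\theta'\in\Sigma^0_1$. The paper simply writes the sentence as $\forall X\forall t\exists x\forall y\exists z\,\phi$ with $\phi\in\Delta^0_0$, fixes a counterexample $c$, and sets $\theta=\neg\phi(C\uh_z,c,x,y,z)$. Second, your semi-regularity step is already exactly the paper's condition (3) and needs no sparsity, since each $M$-finite $E$ is treated at some stage. Third, the iterated-exponent bookkeeping is harmless but unnecessary: for nonstandard $d_s$ and primitive recursive $f$, the largest $e$ with $f(e)\le d_s$ is nonstandard, so a single nonstandard exponent suffices.
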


\begin{proof}
Let $\phi(X\uh_z,t,x,y,z)$ be a $\Delta_0^0$-formula and assume that $\RCA_0 + \BSig_2 \not \vdash \forall X \forall t \exists x \forall y \exists z \phi(X\uh_z,t,x,y,z)$. In what follows, we enrich the language with two constant symbols $c$ and $C$, of first and second order, respectively.

By completeness and the Löwenheim-Skolem theorem, there exists some countable model $$\M = (M,S,c^\M, C^\M) \models \RCA_0 + \BSig_2 + \forall x \exists y \forall z \neg \phi(C\uh_z,c,x,y,z)$$
Let $\theta(x,y,z)$ be the $\Delta_0^{0,C,c}$ formula $\neg \phi(C\uh_z,c,x,y,z)$. \\

By \Cref{prop:largeness-bsig2-largeness}, for all standard $n$, there exists some $\omega^n$-large$(\theta)$ subset~$X$ of $M$ such that $c \leq \min X$. So, by compactness, we can assume that $\M$ contains an $\omega^{d}$-large$(\theta)$ set $X$ with $c \leq \min X$ for some non-standard integer $d$. \\

By a standard argument, using \Cref{thm:rt22-bound} for (4) and \Cref{thm:block-homogeneous-bound} for (5), we can consider a decreasing sequence $X = X_0 \supseteq X_1 \supseteq \dots$ such that for every $i < \omega$:

\begin{itemize}
\item[(1)] $X_i$ is $\omega^{d_i}$-large$(\theta)$ for some non-standard $d_i \in M$;
\item[(2)] $\min X_{i+1} > \min X_i$;
\item[(3)] For every $M$-finite set $E$ such that $|E| < \min X_i$, there exists some $j > i$ such that $[\min X_j, \max X_j] \cap E = \emptyset$;
\item[(4)] For every coloring $f : [M]^2 \to 2$ in $\M$, there exists some $j > i$ such that $[X_j]^2$ is $f$-monochromatic;
\item[(5)] For every coloring $f : A^{<M,1} \to \ell$ for some $\ell < \min X_i$ and some $M$-finite alphabet $A$ with $|A| < \min X_i$ with $f \in \M$, there exists some $j > i$ and an $X_j$-variable word~$W \in \M$ such that for every~$u_0, u_1 \in \USub^{1,\star}_A(W)$ for which $\pi(u_0) = \pi(u_1)$, $f(u_0) = f(u_1)$;
\item[(6)] There exists some $j > i$ such that 
$$\forall x < \min X_{j} \exists y < \min X_{j+1} \forall z < \max X_{j+1} \theta(x,y,z)$$

\end{itemize}

Then, consider $I = \sup \{\min X_i | i \in \omega\}$. $I$ is a semi-regular cut of $M$ by $(3)$, therefore $(I, \mathsf{Cod}(M/I)) \models \WKL_0$. The constraint $\min X \geq c$ imposed on largeness$(\theta)$ ensures that $c$ is in $I$. The condition $\min X_{i+1} > \min X_i$ implies that every $X_i \cap I$ is infinite in $I$.

Let $f : [I]^2 \to 2$ be a coloring in $(I, \Cod(M/I))$. There exists some coloring $g : [M]^2 \to 2$ in~$\M$ such that $f = g \cap [I]^2$. By (4), there is some~$j > i$ such that $[X_j]^2$ is $g$-monochromatic. Since $X_j \cap I$ is infinite in~$I$, $(I, \Cod(M/I)) \models \RT^2_2$.

Let $\ell \in I$ and $A$ a finite alphabet with $|A| \in I$, let $f : A^{<I,1} \to \ell$ be a coloring in $(I, \Cod(M/I))$. There exists some coloring $g : A^{<M,1} \to \ell$ in $\M$ such that $f = g \uh I$ and let $i$ such that $\ell, |A| < \min X_i$. By (5), there is some $j > i$ and an $X_j$-variable word~$W \in \M$ such that for every~$u_0, u_1 \in \USub^{1,\star}_A(W)$ for which $\pi(u_0) = \pi(u_1)$, $g(u_0) = g(u_1)$; then $W \uh I$ is an $X_j \cap I$-variable word such that for every~$u_0, u_1 \in \USub^{1,\star}_A(W \uh I)$ for which $\pi(u_0) = \pi(u_1)$, $f(u_0) = f(u_1)$. Since $X_j \cap I$ is infinite in $I$, $W \uh I$ is an $\omega$-variable word in $(I, \Cod(M/I))$, so $(I, \Cod(M/I)) \models \RFCSL^1$.

Finally, $(I, \mathsf{Cod}(M/I),c^{\M},C^{\M}\cap I) \models \forall x \exists y \forall z \theta(x,y,z)$ 
as for every $k \in I$, there exists an index $i \in \omega$ such that $k < \min X_i$ and therefore by (6), there is some~$j > i$ such that $\forall x < k \exists y < \min X_{j+1} \forall z < \max X_{j+1} \ \theta(x,y,z)$, so $(I, \mathsf{Cod}(M/I), c^{\M},C^{\M} \cap I) \models \forall x < k \exists y \forall z \ \theta(x,y,z)$ (since $\max X_{j+1} > I$). 
Hence 
$$(I, \mathsf{Cod}(M/I)) \models \WKL_0 + \RT^2_2 + \RFCSL^1 + \neg \forall X \forall t \exists x \forall y \exists z \phi(X \uh_z,t,x,y,z)$$
\end{proof}

This implies in particular our main theorem:

\begin{repmaintheorem}{thm:csl12-pi04-conservation}
$\WKL_0 + \CSL^1_2$ is $\forall \Pi^0_4$-conservative over~$\RCA_0 + \BSig_2$.
\end{repmaintheorem}
\begin{proof}
Immediate by \Cref{thm:rfcsl1-pi04-conservation} and \Cref{lem:csl-rfcsl-rt}.
\end{proof}

\begin{corollary}\label[corollary]{cor:csl12-pi03-rca}
$\WKL_0 + \CSL^1_2$ is $\forall \Pi^0_3$-conservative over~$\RCA_0$ and $\Pi^0_2$-conservative over~$\mathsf{PRA}$.
\end{corollary}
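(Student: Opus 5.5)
This follows from \Cref{thm:csl12-pi04-conservation} together with the classical conservation facts for $\BSig_2$. I would handle the two parts separately.

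\emph{$\forall \Pi^0_3$-conservation over $\RCA_0$.} Let $\varphi$ be a $\forall \Pi^0_3$-sentence provable in $\WKL_0 + \CSL^1_2$. Every $\forall \Pi^0_3$-sentence is in particular $\forall \Pi^0_4$, so by \Cref{thm:csl12-pi04-conservation} the theory $\RCA_0 + \BSig_2$ proves $\varphi$. It remains to strip $\BSig_2$. For this I use the theorem of Paris and Kirby, in its second-order form due to Hájek, that $\RCA_0 + \BSig_2$ is $\forall \Pi^0_3$-conservative over $\RCA_0$; this is where the quantifier level drops from $4$ to $3$. Hence $\RCA_0 \vdash \varphi$.

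If a self-contained argument is preferred, the proof of \Cref{thm:rfcsl1-pi04-conservation} can be rerun directly from a countable model of $\RCA_0 + \neg\varphi$ with $\varphi \equiv \forall X \forall t \exists x \forall y \theta$. The only change is that the largeness notion is replaced by plain Ketonen--Solovay $\bbomega^n$-largeness relativized to $C$, with $\theta$ trivial. Condition (6) is then vacuous, and the standard fact that $\RCA_0$ proves the existence of $\bbomega^n$-large sets for each standard $n$ supplies the large sets needed to build the cut. The closure results \Cref{thm:rt22-bound} and \Cref{thm:block-homogeneous-bound} are stated uniformly in $\theta$ over $\RCA_0$, so they apply unchanged. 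The main point to verify is that $\neg\varphi$, a $\Sigma^0_3$ statement about $(c, C)$ whose matrix is $\Pi^0_1$ in $y$, holds in the semi-regular cut. This is handled by choosing the cut to include a witness $x$ and then using downward absoluteness of $\Pi^0_1$ formulas to the cut. I expect this absoluteness check to be the only delicate step.

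\emph{$\Pi^0_2$-conservation over $\PRA$.} Let $\psi$ be a $\Pi^0_2$-sentence provable in $\WKL_0 + \CSL^1_2$. By the first part, $\RCA_0 \vdash \psi$. Then I apply the classical result of Parsons, Mints and Takeuti that $\ISig_1$, and hence $\RCA_0$ since it is $\Pi^1_1$-conservative over $\ISig_1$, is $\Pi^0_2$-conservative over $\PRA$. Therefore $\PRA \vdash \psi$.
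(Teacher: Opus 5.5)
Your main argument is correct and is the paper's proof. \Cref{thm:csl12-pi04-conservation} makes every $\forall\Pi^0_3$ theorem of $\WKL_0+\CSL^1_2$ a theorem of $\RCA_0+\BSig_2$. The $\forall\Pi^0_3$-conservation of $\RCA_0+\BSig_2$ over $\RCA_0$ then removes $\BSig_2$, and the $\Pi^0_2$-conservation of $\RCA_0$ over $\PRA$ gives the second clause. Your passage through $\ISig_1$ and Parsons--Mints--Takeuti is just the usual decomposition of the Friedman result the paper cites.

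The optional ``self-contained'' sketch, however, gets wrong the very step you call delicate, so drop it or repair it. Once $X:=C$ and $t:=c$ are fixed, what must hold in the cut is $\forall x\,\exists y\,\neg\theta(C,c,x,y)$. This is a $\Pi^0_2$ statement in which $x$ is universal. So there is no witness $x$ to put into $I$, and downward absoluteness of $\Pi^0_1$ formulas does nothing here. What you need is that $I$ be closed under $x\mapsto\mu y\,\neg\theta(C,c,x,y)$. That closure is exactly what a nontrivial apartness formula together with condition (6) in the proof of \Cref{thm:rfcsl1-pi04-conservation} provides. Taking the apartness formula trivial makes (6) vacuous and loses it.

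The repair is to take the paper's apartness formula to be $\neg\theta(C,c,x,y)$, independent of $z$. The witnessing function is then total and $\Delta^0_1$, so $\RCA_0$ alone suffices to produce $\omega^n$-large$(\theta)$ subsets of infinite sets for each standard $n$, without $\BSig_2$.
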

\begin{proof}
By a parameterized version of the Parsons, Paris and Friedman conservation theorem (see~\cite{hajek1998metamathematics,kaye1991models}), $\RCA_0 + \BSig_2$ is $\forall \Pi^0_3$-conservative over~$\RCA_0$. By Friedman~\cite{friedmancom} (see \cite{simpson2009subsystems}), $\RCA_0$ is $\Pi^0_2$-conservative over~$\mathsf{PRA}$.
\end{proof}

\begin{corollary}[Liu and Patey~\cite{liu2026reverse}]\label[corollary]{cor:csl12-separation-aca}
$\WKL_0 + \CSL^1_2$ does not imply~$\ACA_0$.
\end{corollary}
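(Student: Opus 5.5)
The plan is to derive this from the conservation theorem rather than by building a model directly. The main point is that $\ACA_0$ proves a $\Pi^1_1$ consequence which, after removing set quantifiers, becomes a true $\forall\Pi^0_4$ (in fact $\forall\Pi^0_3$) statement that is not provable in $\RCA_0+\BSig_2$. One then invokes \Cref{thm:csl12-pi04-conservation}.

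First, I will recall that $\RCA_0+\ACA_0$ proves $\ISig_n$ for every standard $n$. In particular it proves $\ISig_2$. Each instance of $\ISig_2$ with set parameter $X$ has the form
\[
\forall X\,\forall t\,\bigl[\varphi(0)\wedge\forall x(\varphi(x)\to\varphi(x+1))\to\varphi(t)\bigr],
\]
where $\varphi$ is $\Sigma^0_2$ in $X$. The bracket is a Boolean combination of $\Sigma^0_2$ and $\Pi^0_3$ formulas. After prenexing, the whole thing is therefore a $\forall\Pi^0_4$ formula, and actually $\forall\Pi^0_3$ suffices: $\neg\varphi(0)\vee\exists x(\varphi(x)\wedge\neg\varphi(x+1))\vee\varphi(t)$ is $\Sigma^0_3$ at worst, so the displayed statement sits inside $\forall\Pi^0_4$.

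Second, I will use the known fact that $\RCA_0+\BSig_2\nvdash\ISig_2$. Indeed $\BSig_2$ is $\Pi^0_4$-conservative only in a weak sense, and classically $\BSig_{n+1}$ does not prove $\ISig_{n+1}$ (Paris–Kirby). So there is an instance $\psi$ of $\Sigma^0_2$-induction, of the above $\forall\Pi^0_4$ form, which is not provable in $\RCA_0+\BSig_2$. If $\WKL_0+\CSL^1_2$ implied $\ACA_0$, it would prove $\psi$. By \Cref{thm:csl12-pi04-conservation}, $\RCA_0+\BSig_2$ would then prove $\psi$, which is a contradiction.

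The only delicate point is the syntactic check that the chosen $\ISig_2$ instance is genuinely $\forall\Pi^0_4$. I expect it to be routine, because the hypothesis $\forall x(\varphi(x)\to\varphi(x+1))$ is $\Pi^0_3$ and appears negatively. An alternative would be to use $\mathsf{Con}(\ISig_1)$, which $\ACA_0$ proves but $\RCA_0+\BSig_2$ does not, as in \Cref{cor:ovw121-aca}. That is even simpler, being a $\Pi^0_1$ sentence, and I would present it as the primary argument.
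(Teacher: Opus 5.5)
Your argument is correct, but it takes a different route from the paper.

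\textbf{The paper's route.} It first passes to \Cref{cor:csl12-pi03-rca}, namely $\forall\Pi^0_3$-conservation over $\RCA_0$. That corollary comes from the main theorem together with the Parsons--Paris--Friedman conservation of $\RCA_0+\BSig_2$ over $\RCA_0$. The paper then uses the $\Pi^0_1$ sentence $\mathsf{Con}(\RCA_0)$, which $\ACA_0$ proves and $\RCA_0$ does not, by G\"odel's second incompleteness theorem.

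\textbf{Your route.} You apply \Cref{thm:csl12-pi04-conservation} directly, with a witness that already separates $\ACA_0$ from $\RCA_0+\BSig_2$: either an instance of $\ISig_2$ or the sentence $\mathsf{Con}(\ISig_1)$.

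\textbf{What each buys.} The $\mathsf{Con}(\ISig_1)$ version uses exactly the facts the paper itself invokes in \Cref{prop:csl1-bsig3} and \Cref{cor:ovw121-aca}, so within this paper it is the most economical. The $\Pi^0_3$-conservativity of $\BSig_2$ over $\ISig_1$ is still used, just hidden inside the fact $\RCA_0+\BSig_2\not\vdash\mathsf{Con}(\ISig_1)$. The $\ISig_2$ version gives the stronger separation $\WKL_0+\CSL^1_2\not\vdash\ISig_2$ advertised in the abstract, at the cost of a syntactic check. The paper's route relies only on textbook facts about $\RCA_0$ and $\ACA_0$.

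\textbf{Two slips, neither load-bearing.}

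First, the remark that \qt{$\forall\Pi^0_3$ suffices} is false. An $\ISig_2$ instance has the form $\forall X\forall t\,\Sigma^0_3$, which is $\forall\Pi^0_4$. It cannot be $\forall\Pi^0_3$ in general: $\RCA_0+\BSig_2$ is $\forall\Pi^0_3$-conservative over $\RCA_0$, and $\RCA_0$ does not prove $\ISig_2$. Since you only use the $\forall\Pi^0_4$ classification, the argument survives.

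Second, the phrase that $\BSig_2$ is \qt{$\Pi^0_4$-conservative only in a weak sense} is meaningless. Replace it with an actual reason, either of the following:
\begin{itemize}
\item Paris--Kirby, namely $\BSig_{n+1}\not\vdash\ISig_{n+1}$;
\item $\ISig_2\vdash\mathsf{Con}(\ISig_1)$, combined with the $\Pi^0_3$-conservativity of $\BSig_2$ over $\ISig_1$.
\end{itemize}
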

\begin{proof}
By Simpson~\cite[Corollary VIII.1.7]{simpson2009subsystems}, $\ACA_0$ proves the consistency of~$\RCA_0$, which is a $\Pi^0_1$ statement, while $\RCA_0$ does not by the second G\"odel incompleteness theorem. Thus, by \Cref{cor:csl12-pi03-rca}, $\WKL_0 + \CSL^1_2$ does not imply the consistency of~$\RCA_0$.
\end{proof}

\subsection{Universal triangle-free graph}\label[section]{sec:triangle-free}

We now present an application in Structural Ramsey theory.
Let $\HH_3$ be the universal triangle-free Henson graph, which is the Fraiss\'e limit of the class of all finite triangle-free graphs. Given two structures~$\mathbf F, \mathbf G$, we write ${\mathbf G \choose \mathbf F}$ for the set of all embeddings from~$\mathbf F$ to~$\mathbf G$.
We write $\mathbf F\cong \mathbf G$ iff $\mathbf F$ is isomorphic to $\mathbf G$.
Given integers $k, \ell \in \omega$, structures $\mathbf A, \mathbf B$ and $\mathbf C$ we write $\mathbf C \longrightarrow (\mathbf A)^{\mathbf B}_{k,\ell}$ for the following statement:

\begin{definition}
$\mathbf C \longrightarrow (\mathbf A)^{\mathbf B}_{k,\ell}$: For any coloring $\chi : {\mathbf A \choose \mathbf B} \to k$, there is an embedding $g \in {\mathbf A \choose \mathbf C}$ such that ${g(\mathbf C) \choose \mathbf B}$ uses at most~$\ell$ many colors.
\end{definition}

The \textit{big Ramsey degree} of $\mathbf B$ in $\mathbf A$ is the smallest $L \in \omega \cup \{\omega\}$ such that $\mathbf A \longrightarrow (\mathbf A)^{\mathbf B}_{k,L}$ for every $k \in \omega$, and a infinite structure $\mathbf A$ is said to have \textit{finite big Ramsey degrees} if for all finite substructure of $\mathbf B$, $\mathbf B$ has a finite big Ramsey degree in $\mathbf A$.

Dobrinen~\cite{dobrinen2020ramsey} proved that the triangle-free Henson graph admits finite big Ramsey degrees using a notion of forcing with coding trees. More recently, Hubička~\cite{hubivcka2020big} gave an alternative proof of Dobrinen's theorem using $\CSL$ as its combinatorial core, but with no optimal big Ramsey degree bounds. Anglès d'Auriac, Liu, Mignoty and Patey~\cite{angles2023carlson} noticed that his proof could be formalized in~$\RCA_0$.

\begin{definition}
Let $\GG = (\{0\}^{<\omega, 1}, E)$ be the graph where $E$ is the symmetric binary relation defined as follows:
for $v, w \in \{0\}^{<\omega, 1}$, $v E w$ if $|v| \neq |w|$ and, assuming $|v| < |w|$, the following holds:
\begin{itemize}
    \item[(1)] Passing number property: $w(|v|) = x_0$;
    \item[(2)] Triangle-freeness condition: there is no $i < |v|$ with $v(i) = w(i) = x_0$.
\end{itemize}
\end{definition}

Hubi\v{c}ka~\cite{hubivcka2020big} proved that for every unordered $\omega$-variable word $W$ over~$\{0\}$,
$(\USub^{1,\star}(W), E)$ is computably isomorphic to the triangle-free Henson graph~$\HH_3$.
Moreover, these isomorphisms hold provably over $\RCA_0$, so the indivisibility of the triangle-free Henson graph follows from Carlson-Simpson lemma for 1-variable words over $\RCA_0$. In what follows, we identify the graph $\HH_3$ with its set of vertices:

\begin{theorem}[$\RCA_0 + \CSL^1(1)$]\label[theorem]{thm:henson1-csl1}
For any integer $k > 0$ and any finite coloring $\chi : \HH_3 \to k$, there exists $f \in \binom{\HH_3}{\HH_3}$ such that $\chi$ uses exactly one color on $f(\HH_3)$.
\end{theorem}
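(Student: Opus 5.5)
We prove \Cref{thm:henson1-csl1} by transferring the coloring through Hubi\v{c}ka's representation of $\HH_3$ as the graph $\GG$, applying $\CSL^1_k(1)$ once, and pulling the monochromatic copy back. Working in $\RCA_0 + \CSL^1(1)$, fix $k > 0$ and $\chi : \HH_3 \to k$. We use Hubi\v{c}ka's result, provable in $\RCA_0$, that $\GG = (\{0\}^{<\omega,1}, E)$ is computably isomorphic to $\HH_3$. Fix such an isomorphism $\phi : \GG \to \HH_3$; it exists in the model because it is computable from no parameters. Define $g : \{0\}^{<\omega,1} \to k$ by $g = \chi \circ \phi$. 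This coloring is $\Delta^0_1$ in $\chi$, so it exists by $\Delta^0_1$-comprehension.

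Apply $\CSL^1_k(1)$ to $g$. This yields an $\omega$-variable word $W$ over $\{0\}$ and a color $c < k$ such that $\USub^{1,\star}_{\{0\}}(W)$ is $g$-monochromatic with color $c$. By the second part of Hubi\v{c}ka's result, again provable in $\RCA_0$, the induced subgraph $(\USub^{1,\star}(W), E)$ is computably (in $W$) isomorphic to $\HH_3$. Let $\psi : \HH_3 \to (\USub^{1,\star}(W), E)$ be this isomorphism.

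Set $f = \phi \circ \psi$. It is computable from $W$, hence exists. We check that $f \in \binom{\HH_3}{\HH_3}$. The map $\psi$ is an isomorphism onto the induced subgraph on $\USub^{1,\star}(W)$, so it is an embedding into $\GG$, and $\phi$ is an isomorphism. Thus $f$ preserves both edges and non-edges and is injective. For every vertex $x$ of $\HH_3$, we have $\chi(f(x)) = g(\psi(x)) = c$. Hence $\chi$ takes exactly one color on $f(\HH_3)$, as required.

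The only delicate point is verifying that Hubi\v{c}ka's isomorphism $(\USub^{1,\star}(W),E) \cong \HH_3$ genuinely formalizes in $\RCA_0$ uniformly in $W$. The plan is to check the extension property of $\HH_3$ for $(\USub^{1,\star}(W), E)$ directly and then run a back-and-forth construction. Given finite sets of vertices $P$ (to be adjacent) and $N$ (to be non-adjacent), with $P$ independent, we build a witness $W[v]$ with $v$ a sufficiently long 1-variable word. We place $\star$ exactly at the coordinates that encode the passing numbers of the elements of $P$, and avoid the coordinates that would create adjacency with $N$ or a triangle. The back-and-forth then needs only $\Sigma^0_1$-induction, and its output is computable in $W$. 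This verification is the step cited as noted by Anglès d'Auriac, Liu, Mignoty and Patey, so we may invoke it rather than redo it. We also note that the edge relation of the substituted words matches $E$ on $\GG$, because substitution preserves the positions relevant to the passing number property.
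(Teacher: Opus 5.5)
Your strategy is the paper's own transfer argument: color $\GG$ through $\chi$, apply $\CSL^1_k(1)$ once, and pull the monochromatic $\USub^{1,\star}(W)$ back into $\HH_3$. However, the two isomorphisms $\phi$ and $\psi$ that your proof rests on do not exist, so the step you flag as delicate actually fails.

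\textbf{The gap: $\GG$ is not isomorphic to $\HH_3$.} Take $a = x_0x_0$ and $b = x_0 0$.
\begin{itemize}
\item They have the same length, hence are non-adjacent.
\item The only shorter vertex is $x_0$. It is not adjacent to $a$, because both carry $x_0$ at position $0$.
\item A longer $w$ is adjacent to $a$ iff $w(0)=w(1)=0$ and $w(2)=x_0$. This already makes $w$ adjacent to $b$.
\end{itemize}
So every neighbor of $a$ is a neighbor of $b$. The extension property of $\HH_3$ with $P=\{a\}$ and $N=\{b\}$ would instead require a vertex adjacent to $a$ but not to $b$.

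The map $u \mapsto W[u]$ is an isomorphism from $\GG$ onto $(\USub^{1,\star}(W),E)$ for every $W$; this part of your last paragraph is correct. Hence none of these graphs is isomorphic to $\HH_3$ either. Your witness construction breaks exactly when some $b \in N$ has the same length as some $a \in P$ and all $x_0$-positions of $b$ are $x_0$-positions of $a$.

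The paper's phrase \qt{computably isomorphic to $\HH_3$} has the same inaccuracy. What Hubi\v{c}ka's construction gives is that $\GG$ is a universal triangle-free graph, i.e.\ bi-embeddable with $\HH_3$.

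\textbf{The repair: use embeddings, which is all your argument needs.}

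First, $\GG$ embeds into $\HH_3$. The graph $\GG$ is triangle-free: if $|u|<|v|<|w|$ were pairwise adjacent, then $v(|u|)=w(|u|)=x_0$ would contradict the triangle-freeness condition for the edge between $v$ and $w$. So the forth half of a back-and-forth, using the extension property of $\HH_3$, yields a computable embedding $\alpha : \GG \to \HH_3$.

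Second, $\HH_3$ embeds into $\GG$. Enumerate $\HH_3 = \{h_0, h_1, \dots\}$. Let $\beta(h_n)$ be the word $w_n$ of length $2n+1$ with $w_n(2n) = x_0$, with $w_n(2j+1) = x_0$ iff $j<n$ and $h_j$ is adjacent to $h_n$, and with $0$ elsewhere. For $m<n$:
\begin{itemize}
\item the passing number $w_n(2m+1)$ encodes adjacency of $h_m$ and $h_n$;
\item the only possible common occurrences of $x_0$ in $w_m, w_n$ below $2m+1$ are at positions $2j+1$ with $h_j$ a common neighbor of $h_m$ and $h_n$;
\item such an $h_j$ cannot exist when $h_m, h_n$ are adjacent, by triangle-freeness.
\end{itemize}
So $\beta$ is an embedding.

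Now apply $\CSL^1_k(1)$ to $g = \chi\circ\alpha$ to get $W$ and $c$, and set $f(h) = \alpha(W[\beta(h)])$. This is an embedding $\HH_3 \to \HH_3$ with $\chi(f(h)) = g(W[\beta(h)]) = c$. Everything is computable in $\chi \oplus W$, and the verifications only need bounded checks and $\Sigma^0_1$-induction. So the argument goes through in $\RCA_0 + \CSL^1(1)$.
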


By \Cref{thm:csl12-pi04-conservation} and \Cref{cor:csl12-separation-aca}, it follows that the previous theorem is $\forall \Pi^0_4$-conservative over~$\RCA_0 + \BSig_2$ and does not imply~$\ACA_0$. The latter fact was recently proven by Liu and Patey~\cite{liu2026reverse}.

\subsection{Tree theorem for pairs}

Chubb, Hirst, and McNicholl~\cite{chubb2009reverse} introduced a partition theorem for comparable nodes over perfect binary trees, which was extensively studied in reverse mathematics. Given a set~$S \subseteq 2^{<\NN}$ and $n \in \NN$, we write $[S]^n$ for the set of all pairwise comparable $n$-subsets of~$S$, that is, the subsets of the form $\{\sigma_0, \dots, \sigma_{n-1} \}$ such that $\sigma_i \preceq \sigma_{i+1}$.

\begin{definition}
A set~$T \subseteq \bstr$ is \emph{order-isomorphic to $\bstr$} (written $T \cong \bstr$) if there is a bijection $g : T \to \bstr$ such that for every~$\sigma, \tau \in T$, $\sigma \preceq \tau$ iff $g(\sigma) \preceq g(\tau)$.
\end{definition}

\begin{statement}
$\TT^n_k$ is the statement \qt{For every coloring $f : [\bstr]^n \to k$, there is a set~$T \cong \bstr$ such that $[T]^n$ is $f$-monochromatic}. 
\end{statement}

Chubb, Hirst, and McNicholl~\cite{chubb2009reverse} proved that $\TT^n_2$ is between $\ACA_0$ and $\RT^n_2$ over~$\RCA_0$. In particular, for $n \geq 3$, $\TT^n_2$ is equivalent to~$\ACA_0$. Patey~\cite{patey2016strength} and Dzhafarov and Patey~\cite{dzhafarov2017coloring} proved that $\TT^2_2$ is strictly in-between $\ACA_0$ and $\RT^2_2$ over $\omega$-models. Chong, Li, Liu and Yang~\cite{chongStrengthRamseyTheorem2019a,chongStrengthRamseyTheorem2019b,chongStrengthRamseyTheorem2019c} published a series of article and proved, among other things, that $\TT^2_2$ does not imply $\WKL_0$ over~$\RCA_0$. 

From a proof-theoretic viewpoint, Corduan, Groszek and Mileti~\cite{corduan2010reverse} showed that $\BSig_2$ is strictly weaker than $\TT^1$ over~$\RCA_0$, while Chong, Li, Wang and Yang~\cite{chong2020strength} proved that $\TT^1$ is $\Pi^1_1$-conservative over $\BSig_2 + \mathsf{P}\Sigma^0_1$, where $\mathsf{P}\Sigma^0_1$ is equivalent to the well-foundedness of $\bbomega^{\bbomega}$. In particular, $\TT^1$ does not imply $\ISig_2$ over~$\RCA_0$.
Last, Chong, Wang and Yang~\cite{chong2023conservation} proved that $\TT^1$ is $\forall \Pi^0_3$-conservative over $\RCA_0$. Note that $\TT^2_2$ implies $\TT^1$ over~$\RCA_0$.

\begin{proposition}
$\RCA_0 \vdash \forall k (\CSL^1_k \to \TT^2_k)$.
\end{proposition}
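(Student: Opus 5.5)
The idea is to embed a copy of $\bstr$ into the $0$-variable substitution instances of the $\omega$-variable word given by $\CSL^1_k$. A comparable pair in that copy is then read off from a single $1$-variable word: the lower node is the prefix before the first occurrence of the variable, and the upper node is obtained by substituting the letter $0$. Everything is uniformly computable from the instance and the solution, so the argument goes through in $\RCA_0$, and in fact gives $\TT^2_k \leq_{sW} \CSL^1_k(2)$.

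\emph{The coloring.} Fix $k$ and a coloring $f : [\bstr]^2 \to k$. Take the alphabet $A = \{0,1\}$, so that $0$-variable words over $A$ are exactly elements of $\bstr$. Define $g : A^{<\omega,1} \to k$ by $g(w) = f(w\uh \pi_0(w), w[0])$. Here $w\uh\pi_0(w)$ is the prefix of $w$ before the first occurrence of the variable, and $w[0]$ is $w$ with every occurrence of the variable replaced by $0$. Since $\pi_0(w) < |w| = |w[0]|$, the first string is a proper prefix of the second, so $g$ is total. Apply $\CSL^1_k(2)$ to get an $\omega$-variable word $W$ and a color $c$ such that $g(u) = c$ for every $u \in \USub^{1,\star}_A(W)$.

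\emph{The subtree.} To handle the fact that the letter at the splitting position is not determined by the $1$-variable word, interleave padding. For $s \in \bstr$, let $\iota(s) = 0\,s(0)\,0\,s(1)\cdots 0\,s(|s|-1)$, a word of length $2|s|$, and set $h(s) = W[\iota(s)]$ and $T = \{h(s) : s \in \bstr\}$. I then check that $h$ is an order-isomorphism onto $T$.
\begin{itemize}
\item If $s \preceq t$, then $\iota(s) \preceq \iota(t)$, hence $W[\iota(s)] \preceq W[\iota(t)]$.
\item If $s$ and $t$ are incomparable, they first differ at some $i$. Then $\iota(s)$ and $\iota(t)$ differ at position $2i+1$, so $W[\iota(s)]$ and $W[\iota(t)]$ differ at the first occurrence of $x_{2i+1}$ in $W$. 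This position lies in both strings, since both lengths exceed $2i+1$. Hence they are incomparable.
\item Injectivity follows from the fact that $|W[u]|$ is the position of the first occurrence of $x_{|u|}$, which is strictly increasing in $|u|$. So $h(s) = h(t)$ forces $|s| = |t|$ and then $s = t$ by the previous item.
\end{itemize}
The set $T$ and the map $h^{-1}$ are $W$-computable, because $|W[u]|$ determines $|u|$ computably. Hence $T \cong \bstr$ in $\RCA_0$.

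\emph{Monochromaticity.} Take a comparable pair in $T$, necessarily $h(s) \prec h(t)$ with $s \prec t$. Let $v$ be $\iota(t)$ with position $2|s|$, a padding $0$, replaced by $\star$. Then $v$ is a $1$-variable word whose only variable occurrence is at position $2|s|$. The word $u = W[v] \in \USub^{1,\star}_A(W)$ satisfies the following.
\begin{itemize}
\item The first occurrence of $\star$ in $u$ is at the first occurrence of $x_{2|s|}$ in $W$. All earlier positions carry letters or variables $x_j$ with $j < 2|s|$, which are substituted according to $\iota(s)$. Hence $u\uh\pi_0(u) = W[\iota(s)] = h(s)$.
\item Since the replaced letter was $0$, we have $u[0] = W[\iota(t)] = h(t)$.
\end{itemize}
Hence $f(h(s),h(t)) = g(u) = c$, so $[T]^2$ is $f$-monochromatic.

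The main obstacle is exactly this last identification: the upper node must be recoverable from a $1$-variable word with no knowledge of which letter sat at the splitting position. The padding positions in $\iota$ are there to resolve it, by making the splitting letter always $0$. The remaining concern is checking that nothing in the substitution $W[\cdot]$ uses more than $\Sigma^0_1$-induction, which is routine.
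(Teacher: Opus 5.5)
Your proof is correct and is essentially the paper's argument. It uses the same coloring $v \mapsto f(v[\epsilon], v[0])$ over $A=\{0,1\}$, the same interleaved subtree with padding zeros at even positions, and the same $1$-variable word obtained by replacing the padding $0$ at position $2|s|$ with the variable. Your explicit check that $s \mapsto W[\iota(s)]$ is injective and sends incomparable nodes to incomparable nodes fills in a detail the paper only summarizes as ``order-preserving''.
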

\begin{proof}
Let $f : \bstr \to k$ be an instance of $\TT^2_k$ for some~$k \geq 1$.
Let $A = \{0,1\}$ and $h : A^{<\omega, 1} \to k$ be defined by
$$h(v) = f(v[\epsilon], v[0])$$
By $\CSL^1_k$, there is an infinite $\omega$-variable word~$W$ over~$A$ such that
$\USub^{1,\star}_A(W)$ is $h$-monochromatic for some color~$c$. Let 
$$T = \{ \sigma \in \bstr : |\sigma| \text{ is even } \wedge \forall i < \frac{|\sigma|}{2}, \sigma(2i) = 0 \}$$
Note that $T \cong \bstr$.
Finally, let $S = \{ W[\sigma] : \sigma \in T \}$.
Since $\sigma \mapsto W[\sigma]$ is order-preserving, $S \cong \bstr$.
We claim that $[S]^2$ is $f$-monochromatic for color~$c$.
Fix some $\hat \sigma, \hat \tau \in S$ such that $\hat \sigma \prec \hat \tau$.
Let $\sigma, \tau \in T$ be such that $\hat \sigma = W[\sigma]$ and $\hat \tau = W[\tau]$. In particular, $\sigma \prec \tau$. By definition of~$T$, there is some~$\tau' \in \bstr$ such that $\tau = \sigma \cdot 0 \cdot \tau'$.
Let $u = \sigma \cdot \star \cdot \tau'$. By choice of~$W$, $h(W[u]) = c$.
By definition of~$h$, $$f(\hat \sigma, \hat \tau) = f(W[\sigma], W[\tau]) = f(W[u][\epsilon], W[u][0]) = h(W[u]) = c$$
\end{proof}

Chong, Li, Wang and Yang~\cite{chong2020strength} asked whether $\TT^2_2$ implies $\ISig_2$ over~$\RCA_0$. We answer negatively with the following corollary:

\begin{corollary}
$\WKL_0 + \TT^2_2$ is $\forall \Pi^0_4$-conservative over~$\RCA_0 + \BSig_2$.
\end{corollary}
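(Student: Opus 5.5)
This follows from the Main Theorem \Cref{thm:csl12-pi04-conservation} together with the implication $\CSL^1_2 \to \TT^2_2$ established in the preceding proposition. The logical form is simple. If $T_1$ proves $T_2$ and $T_1$ is $\forall\Pi^0_4$-conservative over $T_0$, then every $\forall\Pi^0_4$-sentence provable in $T_2$ is provable in $T_1$, hence in $T_0$. So $T_2$ is conservative over $T_0$ for that class too, and the plan is to verify this inclusion of theories.

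\textbf{Step 1.} I will invoke the preceding proposition with $k = 2$, which gives $\RCA_0 \vdash \CSL^1_2 \to \TT^2_2$. One point needs checking: the proposition's proof is written for an instance $f : \bstr \to k$. I should confirm that it really treats colorings of comparable pairs, with $h(v) = f(v[\epsilon], v[0])$. This is immediate from the definition of $h$, and the argument is carried out in $\RCA_0$ with $W \oplus f$-computable objects, so the implication holds over $\RCA_0$.

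\textbf{Step 2.} It follows that $\WKL_0 + \CSL^1_2 \vdash \WKL_0 + \TT^2_2$. So any $\forall\Pi^0_4$-sentence $\varphi$ with $\WKL_0 + \TT^2_2 \vdash \varphi$ satisfies $\WKL_0 + \CSL^1_2 \vdash \varphi$.

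\textbf{Step 3.} By \Cref{thm:csl12-pi04-conservation}, $\RCA_0 + \BSig_2 \vdash \varphi$.

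Conservativity in the strict sense also requires that $\WKL_0 + \TT^2_2$ extends $\RCA_0 + \BSig_2$, i.e.\ that it proves $\BSig_2$. This holds because $\TT^2_2 \to \TT^1 \to \BSig_2$ over $\RCA_0$; alternatively, $\TT^2_2 \to \RT^2_2$ together with Hirst's result that $\RT^2_2 \to \BSig_2$.

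The only potential obstacle is this extension requirement, and it is settled by the known implication $\TT^1 \to \BSig_2$ of Corduan, Groszek and Mileti. Consequently, $\WKL_0 + \TT^2_2$ does not prove $\ISig_2$, since $\ISig_2$ is a $\Pi^0_4$-sentence not provable in $\RCA_0 + \BSig_2$. This answers the question of Chong, Li, Wang and Yang.
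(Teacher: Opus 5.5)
Your proposal is correct and takes the paper's route: the corollary is the Main Theorem combined with the preceding proposition $\RCA_0 \vdash \forall k\,(\CSL^1_k \to \TT^2_k)$ at $k=2$, with the instance read as $f : [\bstr]^2 \to k$ as you rightly note, and your extra check that $\TT^2_2$ proves $\BSig_2$ is correct. One small precision on your closing remark: $\ISig_2$ is a scheme, so the exact claim is that some $\forall\Pi^0_4$ instance of it is unprovable in $\RCA_0 + \BSig_2$, and hence also unprovable in $\WKL_0 + \TT^2_2$.
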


\section{Conclusion and open questions}\label[section]{sect:open-questions}

The study of variable word theorems from the viewpoint of reverse mathematics remains a subject with a relative lack of understanding, as witnessed by the big gap between lower bounds and upper bounds. For instance, Hindman's theorem,  $\OVW^1$ and $\CSL^2$ are known to imply $\ACA_0$ over~$\RCA_0$, but the best known upper bounds are $\ACA_0^+$ for $\HT$, and $\PIOOCA_0$ for~$\OVW^1$ (Liu, unpublished). The following questions are of fundamental importance:

\begin{question}
Does $\ACA_0 \vdash \HT$?
\end{question}

\begin{question}
For every~$n \in \omega$, does $\ACA_0 \vdash \OVW^n$?
\end{question}

The framework of $\alpha$-largeness has been shown to be successful in proving separations from theories, where techniques over $\omega$-models failed~\cite{houerou2024pi}. The following questions might be more tractable and would still yield significant insights on the reverse-mathematical strength of variable word theorems:

\begin{question}
Is $\RCA_0 + \HT$ $\forall \Pi^0_3$-conservative over~$\ACA_0$?
\end{question}

\begin{question}
For every~$n \in \omega$, is $\RCA_0 + \OVW^n$ $\forall \Pi^0_4$-conservative over~$\ACA_0$?
\end{question}

Last, the known lower bounds to $\CSL^n$ are all coming from the fact that $\RCA_0 \vdash \forall \ell(\CSL^n_\ell \to \RT^{n+1}_\ell)$ (see \Cref{prop:rt-to-csl}). It follows that its level version, $\RFCSL^n$, has no known non-trivial lower bound.

\begin{question}
For every~$n \in \omega$, is $\RCA_0 + \RFCSL^n$ $\Pi^1_1$-conservative over~$\RCA_0$?
\end{question}

\bibliographystyle{plain}
\bibliography{biblio}

\end{document}